\newtheorem{Theorem}{Theorem}[section]
\newtheorem{definition}[Theorem]{Definition}
\newtheorem{example}[Theorem]{Example}
\newtheorem{remark}[Theorem]{Remark}
\newtheorem{lemma}[Theorem]{Lemma}
\newtheorem{proposition}[Theorem]{Proposition}
\newenvironment{proof}[1][Proof]{\textbf{#1.} }{ \rule{0.5em}{0.5em}}
\def \Z {\mathbb{Z}}
\def \Gc {\mathcal{G}}
\def \d {\partial}
\def \tr {\triangleright}
\def \Ad {\mathrm{Ad}} 
\def \id {\mathrm{id}}
\def \s {\scriptstyle}
\def \R {\mathbb{R}}
\def \C {\mathcal{C}}
\def \tn {\otimes}
\def \k {\kappa}
\def \Cl {\C_{\k}}
\def \w {\omega}
\def \ra {\xrightarrow}
\def \f {\phi}
\def \p {\psi}
\def \tl {\triangleleft}
\def \N {\mathbb N}
\def \rref#1{(\ref{#1})}
\def \ad {\mathrm{ad}}
\def \PGL {\rm PGL}
\def \GL {\rm GL}
\def \de {\delta}
\begin{document}

\title{Link invariants from finite categorical groups} 
\author[1]{Jo\~{a}o Faria Martins}             
\author[2]{Roger Picken}

\affil[1]{Departamento de Matem\'{a}tica,
         Faculdade de Ci\^{e}ncias e Tecnologia,
         Universidade Nova de Lisboa,
         Quinta da Torre,
         2829-516 Caparica.
         Portugal.  
        {\it jn.martins@fct.unl.pt}    
         {\it Current address:} Department of Pure Mathematics,
School of Mathematics,
University of Leeds,
Leeds, LS2 9JT, UK. \medskip
}

\affil[2]{Center for Mathematical Analysis, Geometry and Dynamical Systems,
         Mathematics Department,
         Instituto Superior T\'ecnico, Universidade de Lisboa,
         Av. Rovisco Pais,
         1049-001 Lisboa
         Portugal.
         {\it roger.picken@tecnico.ulisboa.pt} 
         }

\maketitle

\begin{abstract}
 We define an invariant of tangles and framed tangles given a finite crossed module and a pair of functions, called a Reidemeister pair, satisfying natural properties. We give several examples of Reidemeister pairs derived from racks, quandles, rack and quandle cocycles, and central extensions of groups. We prove that our construction includes all rack and quandle cohomology (framed) link invariants, as well as the Eisermann invariant of knots. We construct a class of Reidemeister pairs which constitute a lifting of the Eisermann invariant, and show through an example that this class is strictly stronger than the Eisermann invariant itself.
 \medskip

\noindent {\bf 2010 Mathematics Subject Classification:}{57M25, 
                    57M27  
18D10 
}

\noindent{\bf Key words and phrases:} {Knot invariant, tangle, peripheral system, quandle, rack, crossed module, categorical group, non-abelian tensor product of groups.}
\end{abstract}

\maketitle

\section{Introduction}
In knot theory, for a knot $K$, the fundamental group $\pi_1(C_K)$ of the knot complement $C_K$, also known as the knot group, is an important invariant, which however depends only on the homotopy type of the complement of $K$ (for which it is a complete invariant), and therefore, for example, it fails to distinguish between the square knot and the granny knot, which have homotopic, but {non-diffeomorphic} complements. Nevertheless, a powerful knot invariant $I_G$ can be defined from any finite group $G$, by counting the number of morphisms from the knot group into $G$. In a recent advance, Eisermann \cite{E2} constructed, from any finite group $G$ and any $x \in G$, an invariant $E(K)$ that is closely associated to a complete invariant \cite{W}, known as the peripheral system, consisting of the knot group $\pi_1(C_K)$ and the homotopy classes of a meridian $m$ and a longitude $l$. Eisermann gives examples showing that his invariant is capable of distinguishing mutant knots  as well as detecting chiral (non-obversible), non-inversible and non-reversible  knots (using the terminology for knot symmetries employed in \cite{E2}). Explicitly the Eisermann invariant {for a knot $K$ is:}
\[{E(K)=\sum_{\left\{{ f\colon \pi_1(C_K) \to G|  f(m)=x} \right\}} f(l),}\]
and takes values in the group algebra $\Z[G]$ of $G$.

Eisermann's invariant has in common with many other invariants that it can be calculated by summing over all the different ways of colouring knot diagrams with algebraic data. Another well-known example of such an invariant is the invariant $I_G$ defined  above, which can be calculated by counting the number of colourings of the arcs of a diagram with elements of the  group $G$, subject to certain (Wirtinger) \cite{BZ} relations at each crossing.
 Another familiar construction is to use elements of a finite quandle to colour the arcs of a diagram, satisfying suitable rules at each crossing \cite{FR,CJKLS}. Note that the fundamental quandle of the knot complement is a powerful invariant that distinguishes all knots, up to simultaneous orientation reversal of $S^3$ and of the knot (knot inversion); see  \cite{J}. Other invariants refine the notion of colouring diagrams by assigning additional algebraic data to the crossings - a significant example is quandle cohomology \cite{CJKLS}. Eisermann's invariant can be viewed in several different ways, but for our purpose the most useful way is to see it as a quandle colouring invariant using a special quandle (the ``Eisermann quandle") associated topologically with the longitude and so-called partial longitudes coming from the diagram.

A diagram $D$ of a knot or link $K$ naturally gives rise to a particular presentation of the knot group, known as the Wirtinger presentation.
Our {first} observation is that there is also a natural crossed module of groups associated to a knot diagram \cite{BHS,BM,FM2}, namely $\Pi_2(X_D,Y_D)=\big(\partial\colon \pi_2(X_D,Y_D) \to \pi_1(Y_D)\big)$  - see the next section for the definition of a crossed module of groups and the description of $\Pi_2(X_D,Y_D)$.  This crossed module is a totally free crossed module \cite{BHS}, where  $\pi_1(Y_D)$ is the free group on the arcs of $D$ and   $\Pi_2(X_D,Y_D)$ is the free crossed module on the crossings of $D$.
The crossed module  $\Pi_2(X_D,Y_D)$ is not itself a knot  invariant, although it can be related to the knot group since $\pi_1(C_K)={\rm coker}(\d)$. However, up to crossed module homotopy \cite{BHS}, $\Pi_2(X_D,Y_D)$ is a knot invariant, depending only on the homotopy type of the complement $C_K$. Therefore, given a finite crossed module $\Gc=(\partial\colon E \to G)$, one can define a knot invariant $I_\Gc$  by counting all possible  colourings of the arcs and crossings of a diagram $D$ with elements of $G$ and $E$ respectively, satisfying some natural compatibility relations (so that colourings correspond to crossed module morphisms $\Pi_2(X_D,Y_D) \to \Gc$), and then normalising \cite{FM,FM2}. 

This invariant $I_\Gc(K)$  depends only on the homotopy type of the complement $C_K$ \cite{FM2,FM3}, thus it is a function of the knot group alone. Our main insight is that imposing a suitable restriction on the type of such colourings, and then counting the possibilities, gives a finer invariant. The restriction is to colour the arcs and crossings in a manner that is a) compatible with the crossed module structure, and b) such that the assignment at each crossing is given in terms of the assignments to two incoming arcs by two functions (one for each type of crossing), termed a Reidemeister pair. Since we are choosing particular free generators of $\Pi_2(X_D,Y_D)$ this takes away the homotopy invariance of the invariant. 

The two functions making up the Reidemeister pair must satisfy some conditions, and depending on the conditions imposed, our main theorem (Theorem \ref{maintheorem}) states that one obtains in this way an invariant either of knots or of framed knots (knotted ribbons). In fact our statement extends to tangles and framed tangles. 

{This invariant turns out to have rich properties, which are described in the remainder of the paper (Section 4). It includes as special cases the invariants coming from rack and quandle colourings, from rack and quandle cohomology and the Eisermann invariant (subsections 4.1. and 4.2). In section {4.3} we introduce the notion of an Eisermann lifting, namely a Reidemeister pair derived from a {central extension of groups} which reproduces the arc colourings of the Eisermann quandle, combined with additional information on the crossings. We give a simple example of an Eisermann lifting that is strictly stronger than the Eisermann invariant  it comes from. Finally, in subsection {4.4}, we give a homotopy interpretation of the Eisermann liftings, by using the notion of {non-abelian tensor product and non-abelian} wedge product of groups, {defined by Brown and Loday {\cite{BrL0,BrL}.}}}

\section{{Crossed modules of groups  and categorical groups }}\label{Xmod}

\subsection{Definition of crossed modules and first examples}
\begin{definition}[Crossed module of groups]\label{LCM}
 A crossed module of groups, { ${\Gc= ( \d\colon E \to  G,\tr)}$,} is given by a group morphism $\d\colon E \to G$ together with a  left action $\tr$ of $G$ on $E$ by automorphisms, {such that} the following conditions (called Peiffer equations) hold:
\begin{enumerate}
 \begin{minipage}{0.5 \textwidth}
  \item {$\d(g \tr e)=g \d(e)g^{-1};\,\, \forall g \in G, \forall e \in E,$}\end{minipage}
 \begin{minipage}{0.5 \textwidth} \item {$\d(e) \tr f=efe^{-1};\,\, \forall e,f  \in E$}.\end{minipage}
\end{enumerate}
The crossed module of groups is said to be finite, if both groups $G$ and $E$ are finite. 
Morphisms of crossed modules are defined in the obvious way.
\end{definition}

\begin{example}
Any pair of finite groups $G$ and $E$, with $E$ abelian, gives a finite crossed module of groups with 
trivial $\d, \tr$ {(i.e. $\d(E)=1, \, g\tr e = e, \forall g \in G, \forall e \in E$). More generally we can choose any action of $G$ on $E$ by automorphisms, with trivial boundary map $\d\colon E \to G$.}
\label{simplestexample} 
\end{example}

\begin{example}
{Let $G$ be any finite group. Let $\Ad$ denote the adjoint action of $G$ on $G$. Then ${(\id \colon G \to G, \Ad)}$ is a finite crossed module of groups.}
\end{example}

 \begin{example}
There is a well-known construction of a crossed module of groups in algebraic topology, namely the fundamental crossed module associated to a pointed pair {$(X,Y)$ of path-connected topological spaces $(X,Y)$, thus $Y\subset X$, namely the crossed module:}
{$
\Pi_2(X,Y)= ( \d\colon \pi_2(X,Y) \to \pi_1(Y),\tr),
$}
with the obvious boundary map $\partial\colon \pi_2(X,Y) \to \pi_1(Y)$, {and the usual} action of $\pi_1(Y)$ on $\pi_2(X,Y)$; see figure \ref{action}, and  \cite{BHS} for a complete definition. This is an old result of Whitehead {\cite{W2,W3}. }
\begin{figure} 
\centerline{\relabelbox 
\epsfysize 2.5cm
\epsfbox{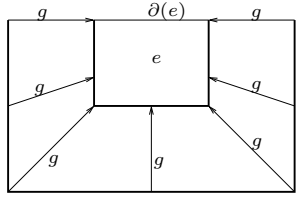}
\relabel{X}{$\s{g}$}
\relabel{Y}{$\s{g}$}
\relabel{Z}{$\s{g}$}
\relabel{W}{$\s{g}$}
\relabel{S}{$\s{g}$}
\relabel{T}{$\s{g}$}
\relabel{U}{$\s{g}$}
\relabel{e}{$\s{e}$}
\relabel{f}{$\s{\d(e)}$}
\endrelabelbox}
\caption{{\label{action} The action of an element $g \in \pi_1(Y)$ on an $e \in \pi_2(X,Y)$}.}
\end{figure}
\end{example}

\begin{example}
We may construct a topological pair $(X_D,Y_D)$ from a link diagram $D$ of a link $K$ {in $S^3$}, and thus obtain a crossed module $\Pi_2(X_D,Y_D)$, associated to the diagram. Regard the diagram as the orthogonal projection onto the $z=0$ plane in $S^3= \R^3 \cup \{\infty\}$ of a link $K_D$, isotopic to $K$, lying entirely in the plane $z=1$, except in the vicinity of each crossing point, where the undercrossing part of the link descends to height $z=-1$.
Then we take $X_D$ {(an excised link complement)} to be the link complement {$C_K$ of $K_D$ minus an open ball}, and $Y_D$ to be the $z\geq 0$ subset of $X_D$, i.e.
\[
X_D := \big (S^3 \setminus {n(K_D)}){\cap \{ (x,y,z)| z\geq -2\}}, \quad \quad Y_D := X_D \cap \{ (x,y,z)| z\geq 0\},
\]
{where $n(K_D)$ is an open regular neighbourhood of $K_D$ in $S^3$.} {Note that the space $X_D$ depends only on $K$ itself, so we can write it as $X_K$. The same is not {true for} $Y_D$.}

Each arc of the diagram $D$ corresponds to a generator of $\pi_1(Y_D)$ and there are no relations {between generators}. Each crossing of the diagram $D$ corresponds to a generator of $\pi_2(X_D, Y_D)$, namely $\epsilon: [0,1]^2\rightarrow X_D$, where the image under $\epsilon$ of the interior of $[0,1]^2$ lies entirely in the region $z<0$ and the image of the boundary of $[0,1]^2$, a loop contained in $Y_D$, encircles the crossing as in Figure \ref{epsPQRS}.
\begin{figure}
\begin{center}
\includegraphics[width=7cm]{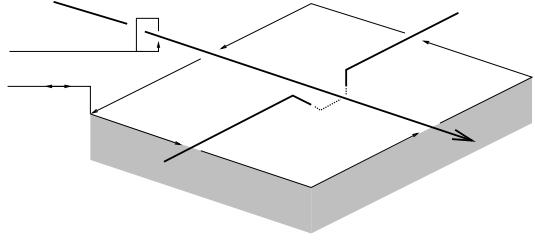}
\end{center}
\caption{\label{epsPQRS} {A generator of $\pi_1(Y_D)$ and a generator of $\pi_2(X_D, Y_D)$.}}
\end{figure}
This boundary loop is the product of four arc loops in $\pi_1(Y_D)$. 
{Again there are no {(crossed module)} relations between the generators of $\pi_2(X_D, Y_D)$ associated to the crossings}. (This  can  be justified by Whitehead's theorem \cite{W2,W3, BHS}: for {path-connected spaces} $X, \,Y$, if $X$ is obtained from $Y$ by attaching 2-cells, then $\Pi_2(X,Y)$ is the free crossed module on the attaching maps of the 2-cells. Note that $X_D$ is homotopy equivalent to the CW-complex obtained from $Y_D$ by attaching a 2-cell for each crossing).

We observe that the quotient 
$\pi_1(Y_D) / {\rm im}\, \d$ is isomorphic to the fundamental group of the link complement {$C_K=\pi_1(S^3 \setminus n(K))$} for any diagram $D$, since quotienting $\pi_1(Y_D)$ by ${\rm im}\, \d$ corresponds to imposing the Wirtinger relations \cite{BZ}, which produces the Wirtinger presentation of {$\pi_1(C_K)$}, coming from the particular choice of diagram. Thus 
$\Pi_2(X_D,Y_D)$, whilst not being itself a link invariant {(unless \cite{FM2,FM3} considered up to crossed module homotopy)}, contains an important link invariant, namely {$\pi_1(C_K)$,} by taking the above quotient. The guiding principle in the construction to follow is to extract additional Reidemeister invariant information from the crossed module $\Pi_2(X_D,Y_D)$.
\label{fundexp}
\end{example}

\subsection{A monoidal category $\C(\Gc)$ defined from a categorical group $\Gc$}\label{amc}
It is well-known that a crossed module of groups $\Gc$ gives rise to a categorical group, denoted $\mathcal{C}(\Gc)$, a monoidal groupoid where all objects and arrows are invertible, with respect to the tensor product operation; see \cite{BM,BHS,BL,FM,ML}. We recall the essential details.  Given a crossed module of groups ${\Gc= ( \d\colon E \to  G,\tr)}$, the {monoidal} category $\mathcal{C}(\Gc)$ has $G$ as its set of objects, and the morphisms from $U\in G$ to $V\in G$ are given by all pairs $(U,e)$ with $e\in E$ such that $\d(e)= VU^{-1}$. It is convenient to think of these morphisms as downward pointing arrows 
and / or  to represent them as squares - see \eqref{CGmorphisms}. 
\begin{equation}\label{CGmorphisms}
\xymatrix{
 & U \ar[d]_{(U,e) }\\ &V} \quad  \begin{CD}\\\\\textrm{ \quad \quad \quad \textrm{{or}}}\end{CD}  \quad \xymatrix{ & \ar@{-}[r]|U\ar@{-}[d] \ar@{-}@/_1pc/ @{{}{ }{}} [r]_{e}  
 & \ar@{-}[d] \\
                                                            & \ar@{-}[r]|V &} \begin{CD}\\\\ \textrm{, \quad \quad \quad with } \partial(e) U=V.\end{CD}
\end{equation}

The composition of morphisms $U\stackrel{e}{\rightarrow}V$ and 
$V\stackrel{f}{\rightarrow}W$ is defined to be $U\stackrel{fe}{\longrightarrow}W$, and the monoidal structure $\otimes$ is expressed as $U \tn V=UV$ on objects, and, on morphisms, as:
\[{\left (\quad \quad \begin{CD}
U\\@V(U,e)VV \\ V\end{CD} \,\,\right )\otimes\,\, \left( \quad \quad \begin{CD} W\\@V{(W,f)}VV\\X\end{CD}   \quad \right)= \, \begin{CD} UW \\ @VV{\big(UW,(V\tr f)\, e\big)}V\\ WX\end{CD}\,\,\,  \quad \quad  \quad \quad \,\, .}
\] 
These algebraic operations are shown using squares in  \eqref{ver} and \eqref{hor}. 
\begin{equation}\label{ver}
\xymatrix{& \ar@{-}[r]|U\ar@{-}[d] \ar@{-}@/_1pc/ @{{}{ }{}} [r]_{e} & \ar@{-}[d] \\& \ar@{-}[r]|V\ar@{-}[d] \ar@{-}@/_1pc/ @{{}{ }{}} [r]_{f} & \ar@{-}[d]\\& \ar@{-}[r]|W &
                                                            }\quad\quad \quad \xymatrix{ \quad \\ =}  {\xymatrix{ & \ar@{-}[r]|U\ar@{-}[dd] \ar@{-}@/_2pc/ @{{}{ }{}} [r]|{fe}  
 & \ar@{-}[dd] \\\\
                                                            & \ar@{-}[r]|W&}}
\end{equation}
\begin{equation}\label{hor}
 \xymatrix{ & \ar@{-}[r]|U\ar@{-}[d] \ar@{-}@/_1pc/ @{{}{ }{}} [r]_{e}  
 & \ar@{-}[d] \\
                                                            & \ar@{-}[r]|V &}
\quad \quad \quad {\begin{CD} \\ \\  \tn \end{CD} }  \xymatrix{ & \ar@{-}[r]|{U'}\ar@{-}[d] \ar@{-}@/_1pc/ @{{}{ }{}} [r]_{e'}  
 & \ar@{-}[d] \\
                                                            & \ar@{-}[r]|{V'} &} \quad \quad \quad { \begin{CD} \\ \\  = \end{CD} } 
 \xymatrix{ & \ar@{-}[rr]|{UU'}\ar@{-}[d] \ar@{-}@/_1pc/ @{{}{ }{}} [rr]_{{(V \tr e')\, e}}  &
 & \ar@{-}[d] \\
                                                            & \ar@{-}[rr]|{VV'} & &}
\end{equation}
The (strict) associativity of the composition and of the tensor product are trivial to check. The functoriality of the tensor product (also known as the interchange law) follows from the  {2nd} Peiffer equation. This calculation is done for example in {\cite{BM,BHS,Po}. }

Let $\k$ be any commutative ring. The monoidal category $\C(\Gc)$ has a $\k$-linear version $\Cl(\Gc)$, whose objects are the same as the objects of $\C(\Gc)$, but such that the set of morphisms $U \to V$ in $\Cl(\Gc)$ is given by the set of  all $\k$ linear combinations of morphisms $U \to V$ in $\C(\Gc)$. The composition and tensor product of morphisms in $\Cl(\Gc)$ are the obvious linear extensions of the  ones in $\C(\Gc)$. It is easy to see that $\Cl(\Gc)$ is a monoidal category.
The categorical group formalism is very well matched to the category of tangles to be used in the next section.

\section{Reidemeister $\Gc$ colourings of oriented tangle diagrams}
\subsection{Categories of tangles}
Tangles are a simultaneous generalization of braids and links. We follow \cite{O,K} very closely, to which we refer for more {details}. Recall that an embedding of a manifold $T$ in a manifold $M$ is said to be neat if $\partial( T)=T \cap \partial( M)$.
\begin{definition}
 An oriented tangle \cite{K,Tu,O} is a 1-dimensional smooth {oriented} manifold neatly embedded in $ \R \times \R\times [-1,1]$, such that $\partial(T) \subset \mathbb{N} \times\{0\} \times \{\pm 1\}$. A framed  oriented tangle is a tangle together with a choice of a framing in each of its components \cite{K,O}. Alternatively we can see a framed tangle as an embedding of a ribbon into $\R \times \R \times [-1,1]$, \cite{RT,Tu,CP}.
\end{definition}
\begin{definition}
 Two oriented tangles (framed oriented tangles) are said to be equivalent if they are related by an isotopy of $\R \times \R \times [0,1]$, relative to the boundary.
\end{definition}

 \begin{definition}A tangle diagram is a diagram of a tangle in $\R \times [-1,1]$, obtained from a tangle by projecting it onto $\R \times \{0\} \times [-1,1]$. Any tangle diagram unambiguously gives rise to a tangle, up to equivalence.
\end{definition}

We have  monoidal categories of oriented tangles and of framed oriented tangles \cite{K,Tu}, where composition is the obvious vertical juxtaposition of tangles and the tensor product $T \tn T'$ is obtained {by placing $T'$ on the right hand side of $T$.} The objects of the categories of oriented tangles and of framed oriented tangles are words in the symbols  $\{+,-\}$; see figure \ref{tangle} for conventions. 
\begin{figure}
\centerline{\relabelbox 
\epsfysize 2cm 
\epsfbox{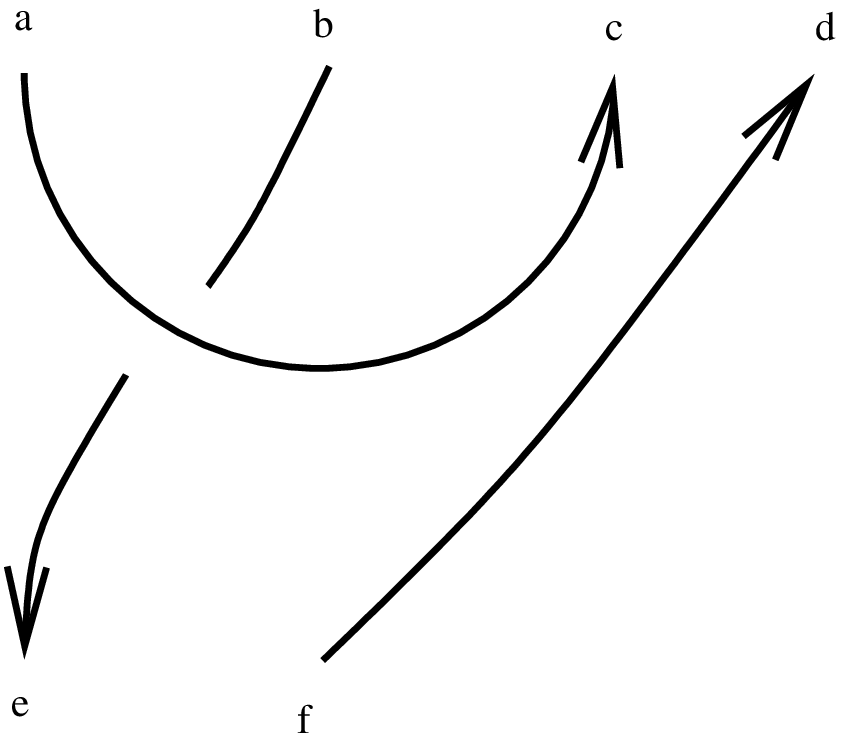}
\endrelabelbox}
\caption{A tangle with source  $++--$ and target $+-$.\label{tangle}
}
\end{figure}

{An oriented tangle diagram is a union of the {tangle diagrams} of figure \ref{dgenerators}, with some vertical lines connecting them. }
\begin{figure}
\centerline{\relabelbox 
\epsfxsize 10cm 
\epsfbox{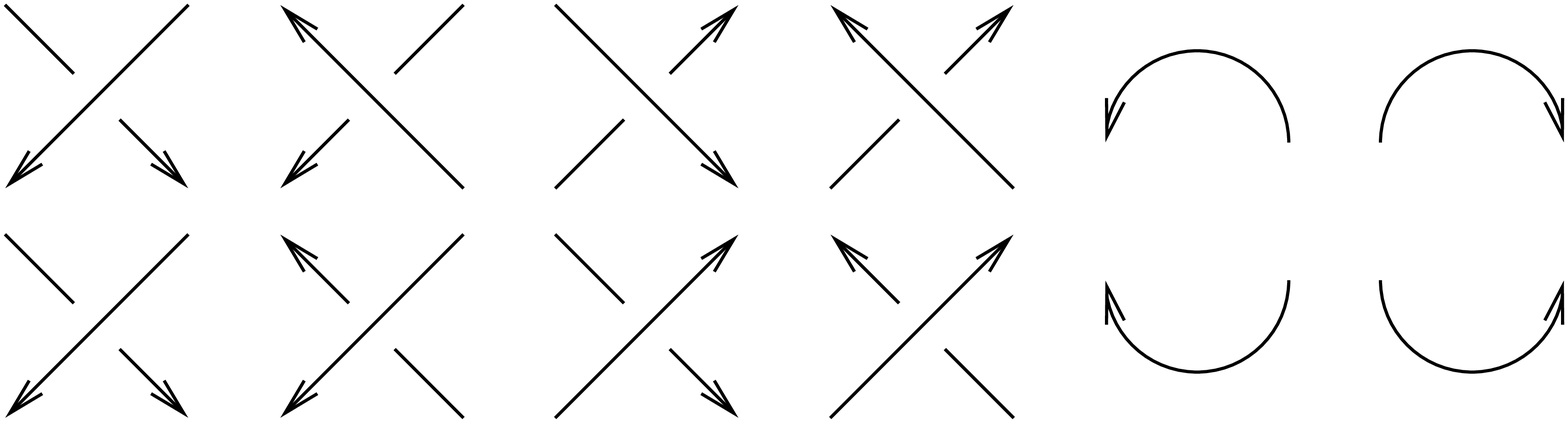}
\endrelabelbox}
\caption{Elementary generators of tangle diagrams.\label{dgenerators}}
\end{figure}
This is a redundant set if we consider oriented tangles up to isotopy.
\begin{definition}
 A sliced oriented tangle diagram is an oriented tangle diagram, subdivided into thin horizontal strips, inside which we have only vertical lines and possibly one of the morphisms in figure \ref{tangle-generators}.
\end{definition}

A theorem appearing in \cite{K}  (Theorem XII.2.2) and also in \cite{FY,RT,Tu,O} states that the category of oriented tangles {may be presented in terms of generators and relations as follows:}
\begin{Theorem}
The monoidal category of oriented tangles is equivalent to the monoidal category presented  by the six oriented tangle diagram generators :
\begin{equation}
X_+, \, X_{-},\, \cup, \, \stackrel{\leftarrow}{\cup}, \, \cap, \, \stackrel{\leftarrow}{\cap}, \,
\label{tanglegens} 
\end{equation}
shown in Figure \ref{tangle-generators}, subject to the 15 tangle diagram relations $R0A-D$, $R1$, $R2A-C$, $R3$ of Figure \ref{tangle-relations}. The category of framed oriented tangles has the same set \eqref{tanglegens} as generators, subject to the 15 relations $R0A-D$, $R1'$, $R2A-C$, $R3$ of Figure \ref{tangle-relations}. 
\end{Theorem}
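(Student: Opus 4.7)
The plan is to prove the two directions of the equivalence: first, that the six diagrams in \eqref{tanglegens} generate every (framed) oriented tangle, and second, that the listed 15 relations are complete, i.e.\ any two sliced diagrams of equivalent tangles can be connected by a finite sequence of instances of those relations combined with the defining relations of a monoidal category.

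For the generation step I would start with an arbitrary (framed) oriented tangle $T$, choose a representative in general position with respect to the height function on $\R \times \R\times [-1,1]$, and project onto $\R \times [-1,1]$ to obtain a generic tangle diagram $D$. Perturbing $D$ slightly, one can ensure that the critical points of the height function (cups, caps) and the crossings all lie at distinct horizontal levels. Cutting $D$ by horizontal lines through the midpoints between consecutive critical levels, each strip contains exactly one of the six elementary pieces of figure \ref{tangle-generators} together with a collection of vertical strands; tensoring with identities on the side and composing vertically then exhibits $D$ as a composite of generators. This already shows the functor from the presented category to the category of tangles is essentially surjective and full.

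For faithfulness (the completeness of the relations) I would use the (framed) oriented Reidemeister theorem: two tangle diagrams represent equivalent (framed) oriented tangles iff they are related by planar ambient isotopy together with the oriented Reidemeister moves $\Omega 1,\Omega 2,\Omega 3$ (respectively $\Omega 1',\Omega 2,\Omega 3$ in the framed case, with the signed curls cancelling only in pairs). The relations $R0A$--$R0D$ are the monoidal-categorical incarnation of the interchange law together with the naturality of $\cup,\cap$, $\stackrel{\leftarrow}{\cup}, \stackrel{\leftarrow}{\cap}$; one shows that any planar isotopy between two sliced diagrams can be decomposed into (i) re-slicing (which is absorbed by strict associativity of composition and tensor product in a monoidal category) and (ii) finitely many level exchanges of distant generators, each of which is precisely an instance of $R0$. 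The moves $R1/R1'$, $R2A$--$C$ and $R3$ are then, by inspection, particular oriented instances of the three Reidemeister moves.

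The principal obstacle is showing that the \emph{other} oriented instances of $\Omega 2$ and $\Omega 3$ (for all possible orientations of the strands involved, and for $\Omega 3$ for all sign patterns of the three crossings) are consequences of $R0$--$R3$ alone; there is a genuine combinatorial case analysis here, and a naive counting gives many more oriented versions than are listed. The standard trick is to introduce cups and caps to reverse the orientation of a strand: an oriented $\Omega 2$ or $\Omega 3$ with ``bad'' orientations can be turned into a listed one by capping off a strand and then uncapping after performing the move, the intermediate step being justified by $R0$ and the listed $R2,R3$. The same device reduces the many oriented $\Omega 1$/$\Omega 1'$ cases to the one(s) listed. This reduction, together with steps one and two above, yields the claimed presentation; the framed statement differs only in that one refrains from declaring the single curls trivial, keeping $R1'$ instead of $R1$, as in \cite{K,Tu,O}.
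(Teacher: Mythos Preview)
The paper does not supply its own proof of this theorem: it is quoted as a known result, with explicit attribution to \cite{K} (Theorem XII.2.2) and to \cite{FY,RT,Tu,O}. So there is no ``paper's own proof'' to compare against; the authors simply import the presentation and then work with it.

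Your sketch is essentially the standard argument one finds in those references (especially Kassel). The generation step via generic position and horizontal slicing is exactly right. For completeness, your reduction strategy---use $R0$ to absorb planar isotopy and level exchanges, then reduce all oriented versions of $\Omega2,\Omega3$ to the listed ones by capping and uncapping strands---is the usual one. One small inaccuracy: the relations $R0A$--$R0D$ here are not the interchange law per se (interchange is built into the monoidal structure and appears separately in Theorem~\ref{sliced-tangles} as the ``interchange move'' of figure~\ref{tensor}); rather, $R0A,R0B$ are the zig-zag (snake) identities witnessing that $\cup,\cap,\stackrel{\leftarrow}{\cup},\stackrel{\leftarrow}{\cap}$ make each object dualisable, while $R0C,R0D$ express compatibility of the crossings $X_\pm$ with those dualities (this is what lets one manufacture the six ``bad'' oriented crossings from $X_\pm$ plus cups and caps, as noted in the Remark following the theorem). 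With that correction your outline matches the proofs in the cited sources.
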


\begin{remark}
We have replaced the R3 relation of Kassel's theorem with its inverse,  since this is slightly more convenient algebraically. The two forms of R3 are equivalent because of the R2A relations. The six other types of oriented crossing
in figure \ref{dgenerators}, with one or both arcs pointing upwards, can be expressed in terms of the generators (\ref{tanglegens}) and are therefore not independent generators  - see \cite{K}, Lemma XII.3.1.
\end{remark}
\begin{figure}
\centerline{\relabelbox 
\epsfysize 2cm 
\epsfbox{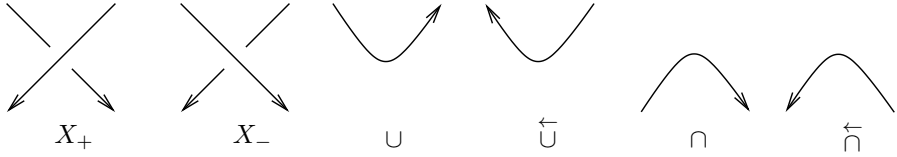}
\relabel{A}{${X_+}$}
\relabel{B}{${X_-}$}
\relabel{C}{${\cup}$}
\relabel{D}{${\stackrel{\leftarrow}{\cup}}$}
\relabel{E}{${ \cap}$}
\relabel{F}{${\stackrel{\leftarrow}{\cap}}$}
\endrelabelbox}
\caption{Generators for the categories of oriented tangles and of  framed oriented tangles.\label{tangle-generators}
}
\end{figure}
\begin{figure}
\centerline{\relabelbox 
\epsfysize 9cm 
\epsfbox{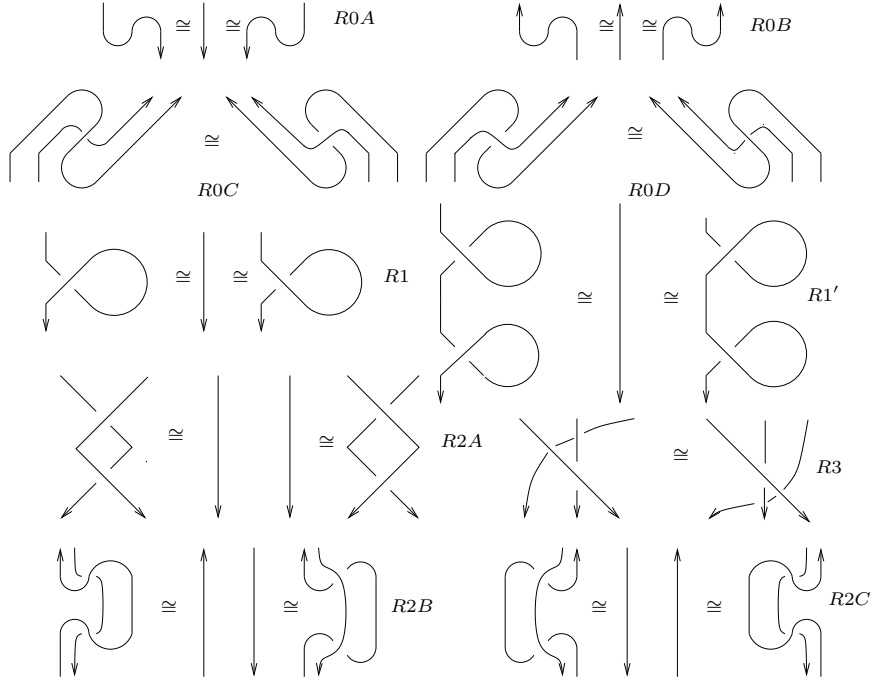}
\relabel{e1}{$\s{\cong}$}
\relabel{e2}{$\s{\cong}$}
\relabel{e3}{$\s{\cong}$}
\relabel{e4}{$\s{\cong}$}
\relabel{e5}{$\s{\cong}$}
\relabel{e6}{$\s{\cong}$}
\relabel{e7}{$\s{\cong}$}
\relabel{e8}{$\s{\cong}$}
\relabel{e9}{$\s{\cong}$}
\relabel{e10}{$\s{\cong}$}
\relabel{f}{$\s{\cong}$}
\relabel{e11}{$\s{\cong}$}
\relabel{e12}{$\s{\cong}$}
\relabel{e13}{$\s{\cong}$}
\relabel{e14}{$\s{\cong}$}
\relabel{e15}{$\s{\cong}$}
\relabel{e16}{$\s{\cong}$}
\relabel{R1}{$\s{R0A}$}
\relabel{R2}{$\s{R0B}$}
\relabel{R3}{$\s{R0C}$}
\relabel{R4}{$\s{R0D}$}
\relabel{R5}{$\s{R1}$}
\relabel{R6}{$\s{R1'}$}
\relabel{R7}{$\s{R2A}$}
\relabel{R8}{$\s{R3}$}
\relabel{R9}{$\s{R2B}$}
\relabel{R10}{$\s{R2C}$}
\endrelabelbox}
\caption{Relations for the categories of oriented tangles and of  framed oriented tangles.\label{tangle-relations}
}
\end{figure}
The previous theorem gives generators and relations at the level of tensor categories. If we want to express not-necessarily-functorial invariants of tangles it is more useful to work with sliced tangle diagrams. The following appears for example {in \cite{O}:}
\begin{Theorem}\label{sliced-tangles}
 Two sliced oriented tangle diagrams represent the same oriented tangle (framed oriented tangle) if, and only if, they are related by 
\begin{enumerate}
 \item Level preserving {isotopies} of tangle diagrams.
\item The moves $R0A-R0D$, $R1$, $R2A-R2C$, $R3$ of Figure \ref{tangle-relations} (in the case of tangles), performed locally in a diagram, or the moves $R0A-R0D$, $R1'$, $R2A-R2C$, $R3$ of Figure \ref{tangle-relations}, in the case of framed tangles.
\item The ``identity'' and ``interchange'' moves of figure \ref{tensor}. (Here $T$ and $S$ can be any tangle diagrams and a trivial tangle diagram is a diagram made only of vertical lines.)
\end{enumerate}
\end{Theorem}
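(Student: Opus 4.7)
The ``only if'' direction is routine: each move listed preserves the underlying oriented (or framed oriented) tangle, as can be checked by producing an explicit ambient isotopy in $\R \times \R \times [-1,1]$ for each of them. The level-preserving isotopies obviously do not change the tangle (they move its projection within horizontal strips), the local $R$-moves are the standard Reidemeister moves (plus the orientation-reversal/zig-zag moves $R0A$--$R0D$), and the identity and interchange moves of Figure \ref{tensor} realize well-known local isotopies.

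For the ``if'' direction, I would deduce the statement from the previously stated presentation theorem (Theorem XII.2.2 of \cite{K}) of the monoidal category of oriented tangles (resp.\ framed oriented tangles). The plan is to compare two kinds of data: a sliced tangle diagram, and a word in the generators \eqref{tanglegens} built from composition and tensor product. Any sliced tangle diagram $D$ canonically produces such a word $w(D)$ by reading slices from top to bottom, and, within each slice, reading generators and identity strands from left to right; the identity and interchange moves together with level-preserving isotopies of a single slice are precisely the relations needed to guarantee that $w(D)$ is well-defined up to the monoidal category axioms (associativity, unit laws, and the interchange/bifunctoriality of $\otimes$). Conversely, any word built from the generators of \eqref{tanglegens} and $\otimes,\circ$ can be realized as $w(D)$ for some sliced diagram $D$.

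Now, by the presentation theorem, two words represent the same oriented (resp.\ framed oriented) tangle if and only if they are related by a finite sequence of: (i) monoidal category axioms, and (ii) local applications of the relations $R0A$--$R0D$, $R1$ (resp.\ $R1'$), $R2A$--$R2C$, $R3$. Under the correspondence $D \mapsto w(D)$, step (i) translates into the identity and interchange moves together with level-preserving isotopies (used to bring the two applications of a monoidal axiom into the same slice, or to align strands across slices), and step (ii) translates into the local $R$-moves performed inside a horizontal strip. Putting these translations together gives exactly the three classes of moves in the statement, which proves the equivalence.

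The main technical obstacle is step (i): one must verify carefully that the ``global'' moves permitted by monoidal category axioms (reassociating composites, inserting and removing identity morphisms, and exchanging the order of generators that sit in different tensor factors) can always be realized as a finite composite of the \emph{local} identity/interchange moves of Figure \ref{tensor} together with level-preserving isotopies of individual slices. This is essentially a combinatorial lemma about normal forms for morphisms in a strict monoidal category, and it is where most of the content resides; once this is in place, the $R$-moves take care of the remaining categorical relations automatically. I would therefore refer to \cite{O} for the careful combinatorial argument, as the result is by now standard.
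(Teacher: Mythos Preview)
The paper does not actually prove this theorem: it is stated with the preamble ``The following appears for example in \cite{O}'' and is simply quoted as a known result, with no argument given. Your proposal is a reasonable sketch of how the result is obtained from the monoidal-category presentation theorem, and since you too defer to \cite{O} for the careful combinatorics, your treatment is entirely consistent with the paper's---indeed it goes further, since the paper offers no proof at all.
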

\begin{figure}
\centerline{\relabelbox 
\epsfxsize 11cm 
\epsfbox{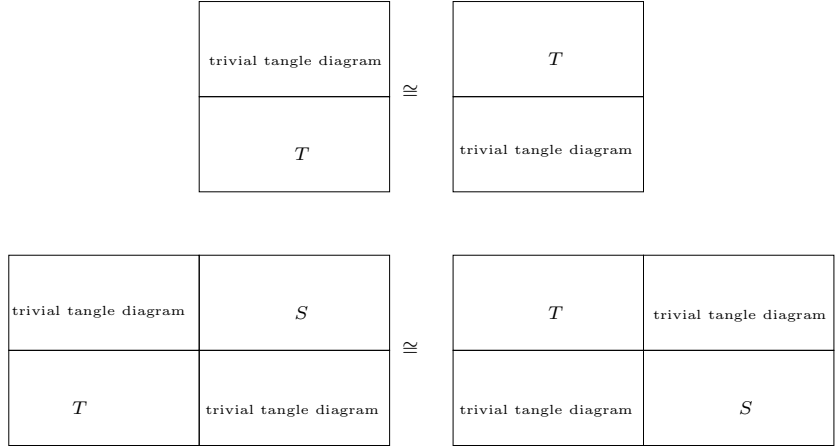}
\relabel{i1}{$\scriptscriptstyle{\textrm{trivial tangle diagram}}$}
\relabel{i2}{$\scriptscriptstyle{\textrm{trivial tangle diagram}}$}
\relabel{i3}{$\scriptscriptstyle{\textrm{trivial tangle diagram}}$}
\relabel{i4}{$\scriptscriptstyle{\textrm{trivial tangle diagram}}$}
\relabel{i5}{$\scriptscriptstyle{\textrm{trivial tangle diagram}}$}
\relabel{i6}{$\scriptscriptstyle{\textrm{trivial tangle diagram}}$}
\relabel{A}{$\s{T}$}
\relabel{B}{$\s{T}$}
\relabel{T1}{$\s{T}$}
\relabel{S1}{$\s{S}$}
\relabel{T2}{$\s{T}$}
\relabel{S2}{$\s{S}$}
\relabel{s}{$\s{\cong}$}
\relabel{t}{$\s{\cong}$}
\endrelabelbox}
\caption{The identity move and the interchange move.\label{tensor}}
\end{figure}

\begin{definition}
[Enhanced tangle]\label{ent} Let $X$ be a set (normally $X$ will be either a group or a {quandle / rack}). An $X$-enhanced (framed) oriented tangle is a (framed) oriented tangle  $T$ together with an assignment of an element of $X$ to each point of the boundary of $T$. We will  consider $X$-enhanced (framed)  tangles up to isotopy of $\R \times \R \times [-1,1]$, fixing the end-points. 
\end{definition}

Given a set $X$, there exist monoidal categories having as morphisms the set of $X$-enhanced oriented tangles and of $X$-enhanced framed oriented tangles, up to isotopy.  These categories have as objects the set of all formal words $\w$ in the symbols $a$ and $a^*$ where $a \in X$.  See figure \ref{etangle} for conventions.
\begin{figure}
\centerline{\relabelbox 
\epsfysize 2cm 
\epsfbox{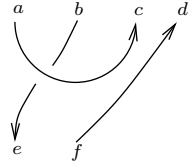}
\relabel{a}{$\s{a}$}
\relabel{b}{$\s{b}$}
\relabel{c}{$\s{c}$}
\relabel{d}{$\s{d}$}
\relabel{e}{$\s{e}$}
\relabel{f}{$\s{f}$}
\endrelabelbox}
\caption{An $X$-enhanced tangle with source $a.b.c^*.d^*$ and target $e.f^*$.\label{etangle}}
\end{figure}
\begin{definition}\label{ev1}
Let $G$ be a group. There exists an evaluation map $\w \mapsto e(\w)$, which associates to a word $\w$ in $G \sqcup G^*$ an element of $G$, obtained by multiplying all elements of $\w$ in the same order, by putting $g^*\doteq g^{-1}$ (and $e(\emptyset)=1_G$ for the empty word {$\emptyset$}).
\end{definition}
 \subsection{Colourings of tangle diagrams} Let ${\Gc= ( \d\colon E \to  G,\tr)}$ be a crossed module of groups.
We wish to define the notion of a $\Gc$-colouring of an oriented tangle diagram, by assigning elements of $G$ to the arcs and elements of $E$ to the crossings in a suitable way. 

For a link diagram $D$ realized as a tangle diagram, a 
$\Gc$-colouring may be regarded as a morphism of crossed modules from the fundamental crossed module $\Pi_2(X_D,Y_D)$ of {Example \ref{fundexp}}, to $\Gc$. This idea extends in a natural way to general tangle diagrams. 

\begin{definition}
Given a finite crossed module ${\Gc= ( \d\colon E \to  G,\tr)}$ and an oriented tangle diagram $D$, a $\Gc$-colouring of $D$ is an assignment of an element of $G$ to each arc of the diagram, and of an element of $E$ to each crossing of the diagram, such that, at each crossing of type $X_+$ or $X_{-}$ with colourings as in \eqref{GC-col}, the following relations hold:
\begin{eqnarray}
X_+: \quad \d(e) & = & XYX^{-1}Z^{-1} \label{fundX+}\\
X_-: \quad \d(e) & = & YXZ^{-1}X^{-1} \label{fundX-}
\end{eqnarray}
\begin{equation}\label{GC-col}
 \xymatrix{ &Z\ar[dr] & &X \ar[ddll]  \\
            & &{\quad\quad e}\ar[dr] &\\
            &X & &Y}  \xymatrix{ &X\ar[ddrr] & &Z \ar[dl]  \\
            & &{\quad\quad e}\ar[dl] &\\
            &Y & &X}
\end{equation}
\end{definition}

Thus we are assigning to each type of coloured crossing a morphism of $\mathcal{C}(\Gc)$ and of $\Cl(\Gc)$, and in a similar way we may associate morphisms of $\mathcal{C}(\Gc)$ to all elementary $\Gc$-coloured tangles, as summarised in figure
\ref{tanglemor}. With the duality where the dual of the morphism $X \ra{e} \partial(e) X$ is $X^{-1} \partial(e)^{-1} \ra{X^{-1} \tr e}  X^{-1}$, and the morphisms associated to the cups and caps are the ones in figure \ref{tanglemor}, we can easily see \cite{FM} that the  monoidal category $\C(\Gc)$ is a compact category \cite{RT}, and in fact a pivotal category \cite{BW}, which however is not spherical in general. Therefore planar graphs coloured in $\C(\Gc)$ can be evaluated to give morphisms in $\C(\Gc)$, and this evaluation is invariant under planar isotopy.

Thus we can assign to the complete $\Gc$-coloured oriented tangle diagram a morphism of $\mathcal{C}(\Gc)$, by using the monoidal product horizontally and composition vertically. This leads to the following definition:
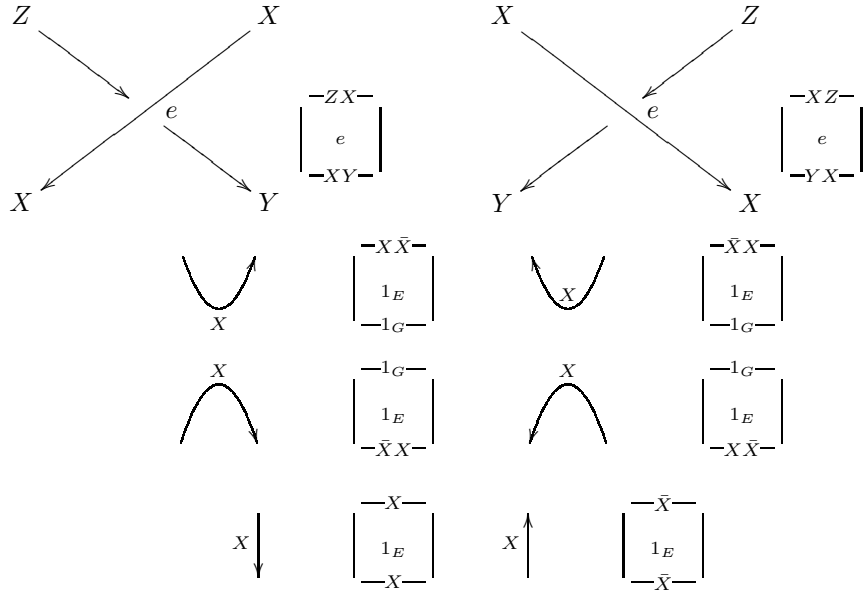
\begin{figure}
 \[\hskip-1.2cm {\xymatrix{ &Z\ar[dr] & &X \ar[ddll]  \\
            & &{\quad\quad e}\ar[dr] &\\
            &X & &Y} \hskip-1cm   \xymatrix{\\ & \ar@{-}[r]|{ZX}\ar@{-}[d] \ar@{-}@/_1pc/ @{{}{ }{}} [r]_{e}  
 & \ar@{-}[d] \\
                                                            & \ar@{-}[r]|{XY} &} \hskip0.2cm \xymatrix{ &X\ar[ddrr] & &Z \ar[dl]  \\
            & &{\quad\quad e}\ar[dl] &\\
            &Y & &X} \hskip-1cm  \xymatrix{ \\ & \ar@{-}[r]|{XZ}\ar@{-}[d] \ar@{-}@/_1pc/ @{{}{ }{}} [r]_{e}  
 & \ar@{-}[d] \\
                                                      & \ar@{-}[r]|{YX} &}}\]
\[
\xymatrix{ & \ar@/_2pc/[r]_X & } \xymatrix{  & \ar@{-}[r]|{X\bar{X}}\ar@{-}[d] \ar@{-}@/_1pc/ @{{}{ }{}} [r]_{1_E}  
 & \ar@{-}[d]\\
                                                      & \ar@{-}[r]|{1_G} &} \xymatrix{  & & \ar@/^2pc/[l]_X  } \xymatrix{ 		 & \ar@{-}[r]|{\bar{X}X}\ar@{-}[d] \ar@{-}@/_1pc/ @{{}{ }{}} [r]_{1_E}  
 & \ar@{-}[d]\\
                                                      & \ar@{-}[r]|{1_G} &} \]
\[
\xymatrix{ 		\\& \ar@/^2pc/[r]^X & } \xymatrix{  & \ar@{-}[r]|{1_G}\ar@{-}[d] \ar@{-}@/_1pc/ @{{}{ }{}} [r]_{1_E}  
 & \ar@{-}[d]\\
                                                      & \ar@{-}[r]|{\bar X X} &} \xymatrix{ \\ & & \ar@/_2pc/[l]_X  } \xymatrix{ 		 & \ar@{-}[r]|{1_G}\ar@{-}[d] \ar@{-}@/_1pc/ @{{}{ }{}} [r]_{1_E}  
 & \ar@{-}[d]\\
                                                      & \ar@{-}[r]|{X \bar X} &} \]

\[
\xymatrix{& \ar[d]_X \\& } \xymatrix{  & \ar@{-}[r]|{X}\ar@{-}[d] \ar@{-}@/_1pc/ @{{}{ }{}} [r]_{1_E}  
 & \ar@{-}[d]\\
                                                      & \ar@{-}[r]|{ X} &} \xymatrix{  & \\ & \ar[u]^X  } \xymatrix{ 		 & \ar@{-}[r]|{\bar X}\ar@{-}[d] \ar@{-}@/_1pc/ @{{}{ }{}} [r]_{1_E}  
 & \ar@{-}[d]\\
                                                      & \ar@{-}[r]|{\bar{X}} &} \]
\caption{Turning $\Gc$-coloured tangles into morphisms of $\C(\Gc)$.  The symbol {$\bar{X}$ stands for $X^{-1}$.}\label{tanglemor} }
\end{figure}

\begin{definition}\label{ev2}
Given a $\Gc$-colouring $F$ of a tangle diagram $D$ (we say $F\in C_\Gc(D)$, the set of $\Gc$ colourings of $D$), the evaluation of $F$, denoted $e(F)$, is the morphism in $\mathcal{C}(\Gc)$ 
 obtained by multiplying horizontally and composing vertically the morphisms of $\mathcal{C}(\Gc)$ associated to the elementary tangles which make up $D$. 
\end{definition}

\begin{remark}
For a link diagram the evaluation of $F$ takes values in $A$, the automorphism subgroup of $1_G$, i.e. $A={\rm ker}\, \d\subset E$. 
\end{remark}

For a tangle diagram without open ends at the top and bottom, i.e. a link diagram, one can conjecture that the number of  $\Gc$-colourings of the diagram can be normalized to a link invariant, by analogy with the familiar link invariant which is the number of Wirtinger colourings of the diagram using a finite group $G$ (a Wirtinger colouring in the present context would be a $\Gc$-colouring where the group $E$ is trivial).   {Indeed it was proven in \cite{FM} that the number of colourings of a link diagram evaluating to the identity of $E$ can be normalised to give an invariant of knots}. However this invariant depends only on the homotopy type of the complement of the knot \cite{FM2}, thus it is a function of the knot group only.

Therefore we are led to consider the possibility of imposing more refined constraints on the $\Gc$-colourings of a tangle diagram in such a way that the number of constrained $\Gc$-colourings does respect the Reidemeister moves. Intuitively we are looking at the simple homotopy type, rather than the homotopy type of {a link complement.} Our idea is to restrict ourselves to $\Gc$-colourings of diagrams where, at each crossing, the colouring of the crossing with an element of $E$ is determined by the $G$-colouring of two arcs, namely the overcrossing arc and the lower undercrossing arc. To this end we introduce two functions:
\[
\psi: G\times G \rightarrow E, \qquad \phi: G\times G \rightarrow E,
\]
which determine the $E$-colouring of the two types of crossing, as in \eqref{psiphidef}:
\begin{equation}\label{psiphidef}
 \xymatrix{ &Z\ar[dr] \ar@{-}@/_2pc/ @{{}{ }{}}[rr]_{\quad \quad \quad \quad \quad \psi(X,Y)} & &X \ar[ddll]  \\
            & &\ar[dr] &\\
            &X & &Y}  \xymatrix{ &X\ar[ddrr] \ar@/_2pc/ @{{}{ }{}}[rr]_{\quad \quad \quad \quad \quad \phi(X,Y)}  & &Z \ar[dl]  \\
            & & \ar[dl] &\\
            &Y & &X}
\end{equation}
{Since this is  a $\Gc$-colouring, these functions  determine the $G$-assignment for the remaining arc:}
\begin{eqnarray}
X_+: \quad Z & = & \d \psi(X,Y)^{-1}XYX^{-1} \label{Zforpsi} \\
X_-: \quad Z & = & X^{-1}\d  \phi(X,Y)^{-1}YX \label{Zforphi}
\end{eqnarray}

We now come to our main definition.
\begin{definition}[unframed Reidemeister pair]\label{rp}
 The pair  $\Phi=(\psi,\phi)$ is said to be an unframed Reidemeister pair if $\psi \colon G \times G \to E$ and $\phi\colon G \times G \to E$ satisfy the following three relations for each $ X,\, Y, \, T \in G$:
\begin{eqnarray}
 \psi(X,X) & \stackrel{R1}{=} & 1_E \label{R1}\\
 \phi(X,Y) \psi(X,Z) & \stackrel{R2}{=} & 1_E \label{R2}\\				
\f(Y,X). Y\tr \f(T,Z).\f(T,Y) & \stackrel{R3}{=} & X\tr \f(T,Y). \f (T,X) . T\tr \f(V,W) \label{R3}
\end{eqnarray}
where, in R2,  
$ Z = X^{-1}\d  \phi(X,Y)^{-1}YX $,
and in R3:
\begin{align*}
 Z & =  Y^{-1}\d  \phi(Y,X)^{-1}XY \,\,, 
&V & =  T^{-1}\d  \phi(T,Y)^{-1}YT\,\,,   
&W & =  T^{-1}\d  \phi(T,X)^{-1}XT 
\end{align*}
\end{definition}
\begin{remark}
The equations above relate to the Reidemeister 1-3 moves, as we will see shortly in the proof of Theorem \ref{maintheorem}.
 If equation \eqref{R2} holds we can substitute \eqref{R3} by the equivalent:
 \begin{equation}\label{R3p} 
 \psi(X,Y)\,.\, A \tr \psi(X,Z) \,.\, \psi(A,B)= X \tr \psi(Y,Z) \,.\, \psi(X,C)\,.\, D \tr \psi(X,Y)
\end{equation}
where:
\begin{align*}
  A&=\d(\psi(X,Y))^{-1} XYX^{-1}\,\,,
  &B&=\d(\psi(X,Z))^{-1} XZX^{-1}\,\,,\\
  C&=\d(\psi(Y,Z))^{-1} YZY^{-1}\,\,,
  &D&=\d(\psi(X,C))^{-1} XCX^{-1}\,\,.
\end{align*}
\end{remark}

\begin{figure}
\centerline{\relabelbox 
\epsfysize 3cm 
\epsfbox{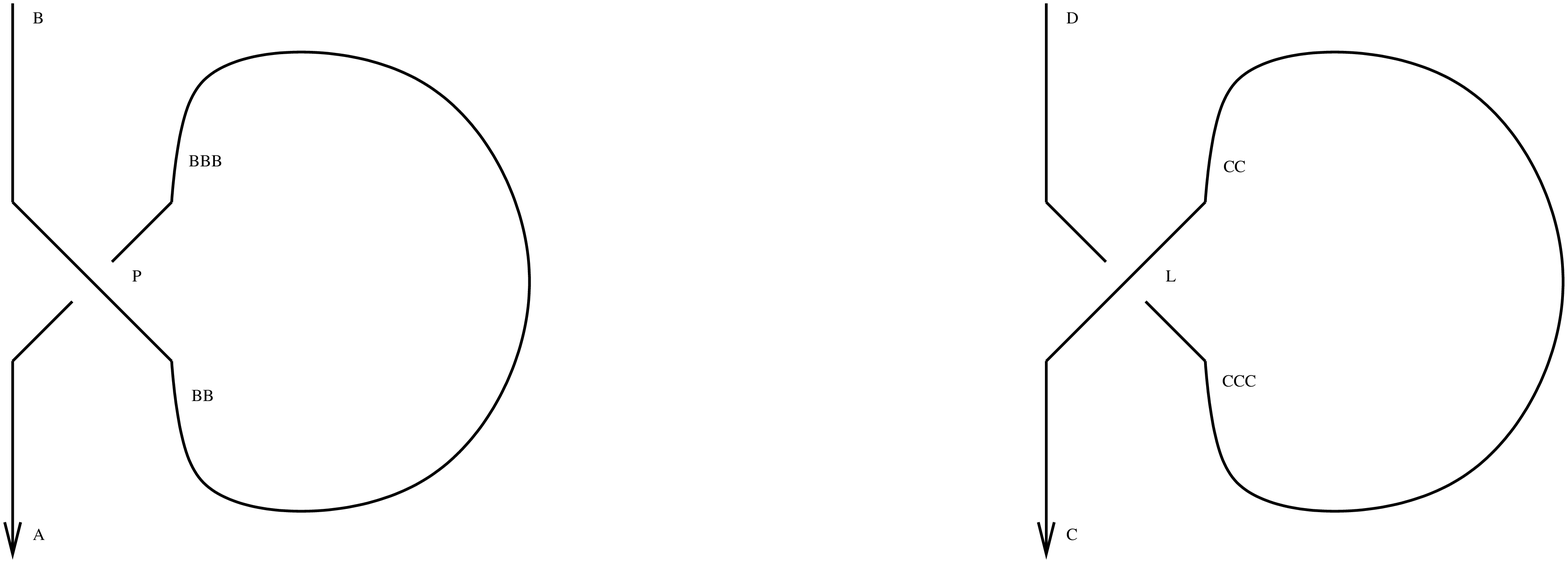}
\relabel{A}{$\scriptstyle{Z}$}
\relabel{B}{$\scriptstyle{f(Z)}$}
\relabel{BB}{$\scriptstyle{f(Z)}$}
\relabel{BBB}{$\scriptstyle{f(Z)}$}
\relabel{P}{$\scriptstyle{\phi(f(Z),Z)}$}
\relabel{C}{$\scriptstyle{A}$}
\relabel{CC}{$\scriptstyle{A}$}
\relabel{CCC}{$\scriptstyle{A}$}
\relabel{D}{$\scriptstyle{g(A)}$}
\relabel{L}{$\scriptstyle{\psi(A,A)}$}
\endrelabelbox}
\caption{Definition of $f,g \colon G \to G$.}
\label{fandg}
\end{figure}

\begin{definition}[Framed Reidemeister pair]\label{frp}
 The pair $\Phi=(\psi,\phi)$ is said to be a framed Reidemeister pair if relations $R2$ and $R3$ of Definition \ref{rp} hold and moreover:
\begin{enumerate}
 \item Given $Z$ in $G$, the equation (c.f. left of figure \ref{fandg}) {$\partial(\phi(A,Z))A=Z$} has a unique solution  $f(Z) \in G$.
\item Defining $g(A)=\d(\psi(A,A))^{-1}A$ (c.f. right of figure \ref{fandg}) it holds that $f\circ g=g\circ f=\id$. In particular both $f$ and $g$ are bijective.
\end{enumerate}
\end{definition}

\begin{definition}
Given a crossed module ${\Gc= ( \d\colon E \to  G,\tr)}$, with $G$ finite, provided with a (framed or unframed) Reidemeister pair $\Phi=(\psi,\phi)$, and an oriented $G$-enhanced tangle diagram $D$, a Reidemeister $\Gc$-colouring of $D$ is a $\Gc$-colouring of $D$ (which extends the colourings at the end-points of $D$, in the sense that an arc coloured by $g$ corresponds to endpoints coloured by $g$ or $g^\ast$, depending on the orientation - see Figure \ref{etangle}) determined by the functions $\psi: G\times G \rightarrow E,\,  \phi: G\times G \rightarrow E$, which fix the colourings at each crossing as in \eqref{psiphidef}, \eqref{Zforpsi} and \eqref{Zforphi}.
\end{definition}

We are now in a position to define a state-sum coming from the Reidemeister $\Gc$-colourings of a link diagram $D$. {Recall Definitions \ref{ev1} and \ref{ev2}.}

\begin{definition}
Consider a  crossed module ${\Gc= ( \d\colon E \to  G,\tr)}$, with $G$ finite, provided with a (framed or unframed) Reidemeister pair $\Phi=(\p,\f)$. Consider an oriented {$G$-enhanced} tangle diagram $D$, connecting the {words $\w$ and $\w'$ in $G \sqcup G^*$}.
We denote the corresponding set of Reidemeister  $\Gc$-colourings of $D$ by $C_\Phi(D,\w,\w')$. Then we define the state sum:
\begin{equation}
I_\Phi(D) =\langle \w|I_\Phi(D)| \w' \rangle\doteq\sum_{F\in C_\Phi(D,\w,\w')} e(F)
\label{Iphi}
\end{equation}
taking values in ${\mathbb N}\big[{\rm Hom}_{\C(\Gc)}\big(e(\w),e(\w')\big)\big]\subset {\rm Hom}_{\C_{\Z}(\Gc)}\big(e(\w),e(\w')\big).$
(Here the set of morphisms $x \to y$ in a category $\C$ is denoted by ${\rm Hom}_\C(x,y)$.)
\end{definition}
\begin{remark} If $D$ is a link diagram then $I_\Phi(D)$ takes values in $\Z[A]$,  the group algebra of $A = \ker \partial.$
\end{remark}
\begin{Theorem}
The state sum $I_\Phi$ defines an invariant of $G$-enhanced tangles if $\Phi$ is an unframed Reidemeister pair and an invariant of framed $G$-enhanced tangles if $\Phi$ is a framed Reidemeister pair.
\label{maintheorem}
\end{Theorem}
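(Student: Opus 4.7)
The plan is to invoke Theorem~\ref{sliced-tangles}, which reduces the proof to showing that the state sum $I_\Phi$ is unchanged under (a) level-preserving isotopies of sliced diagrams, (b) the identity and interchange moves of Figure~\ref{tensor}, and (c) each of the moves R0A--R0D, R2A--R2C, R3, together with R1 in the unframed case or R1' in the framed case.

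Moves (a) and (b) are automatic from the categorical structure already set up: $I_\Phi(D)$ is defined by horizontal juxtaposition and vertical composition of the morphisms assigned in Figure~\ref{tanglemor}, and $\C(\Gc)$ is a strict monoidal category whose interchange law is precisely the second Peiffer equation. The R0 moves amount to the zig-zag identities for the duality on $\C(\Gc)$ and hold because $\C(\Gc)$ was identified as a pivotal category just before the theorem. None of these requires any condition on $\Phi=(\p,\f)$.

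The remaining moves are engineered to correspond to the axioms in Definitions~\ref{rp} and~\ref{frp}. For R2A, prescribing top colours $(X,Y)$ forces the middle arc to carry $Z=X^{-1}\d\f(X,Y)^{-1}YX$, the upper $X_-$-crossing to carry $\f(X,Y)$ and the lower $X_+$-crossing to carry $\p(X,Z)$; vertical composition via~\eqref{ver} gives $\f(X,Y)\p(X,Z)\in E$, and invariance under R2A is exactly~\eqref{R2}. Moves R2B and R2C then follow from R2A together with the R0 zig-zag identities and planar isotopy, in the same way that the six crossings of Figure~\ref{dgenerators} with upward-oriented arcs are expressed in terms of $X_\pm$ via cups and caps. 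The R1 move on an arc coloured $X$ likewise amounts to a single factor $\p(X,X)=1_E$, which is~\eqref{R1}. For R3, writing each side as three stacked $X_+$-crossings and using the tensor product rule~\eqref{hor} to apply the action $\tr$ to intermediate $E$-elements as strands are widened, the evaluation of each side reduces to one side of~\eqref{R3} (equivalently~\eqref{R3p}); the auxiliary $G$-labels $V,W,Z$ listed in Definition~\ref{rp} are precisely those forced on the secondary arcs.

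The main technical obstacle is the R3 check, where tracking through six crossings the non-abelian effect of $\tr$ on each intermediate $E$-element is bookkeeping-heavy; the payoff is that the equality of the two evaluations reduces cleanly to axiom~\eqref{R3}. The framed case R1' then presents the secondary subtlety: rather than killing a single kink, one must show that summing over the allowed Reidemeister colourings on the two sides of R1' produces the same morphism in $\C(\Gc)$. This is exactly the role of Definition~\ref{frp}: the bijectivity of $f$ (obtained from~$\f$) and $g$ (obtained from~$\p$) with $f\circ g=g\circ f=\id$ guarantees that, for any boundary data, the allowed colourings biject across R1' and the induced $E$-labels cancel. Once all individual moves are verified, Theorem~\ref{sliced-tangles} delivers the unframed and framed statements simultaneously.
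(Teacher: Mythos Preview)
Your outline is correct and follows the same strategy as the paper: invoke Theorem~\ref{sliced-tangles} and verify each move. The main difference is one of style. You appeal to pivotality of $\C(\Gc)$ to dispatch R0C, R0D, R2B, R2C in one stroke, whereas the paper computes each of these directly (though it, too, reduces R2B and R2C to the R2A identity~\eqref{R2v2} rather than treating them independently). Both routes are valid; yours is cleaner conceptually, the paper's makes explicit that the matching of the remaining boundary arc-colours (e.g.\ $W=Y$, $U=X$) comes out automatically upon applying~$\d$.

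Two small points of imprecision worth tightening. First, only R0A and R0B are literally zig-zag identities; R0C and R0D slide a crossing past a cup or cap and therefore involve $\psi$ or $\phi$. They still hold without any hypothesis on~$\Phi$, but the mechanism is the second Peiffer relation $\d(e)\tr e=e$ applied to $e=\psi(X,Y)$ or $\phi(X,Y)$, not the bare duality axioms. Second, the R1 relation in Figure~\ref{tangle-relations} has two kinks, one giving $\psi(X,X)=1_E$ and the other giving $\phi(X,Y)=1_E$ with $\d\phi(X,Y)X=Y$; you only mention the first. The second does follow from~\eqref{R1} combined with~\eqref{R2v2}, as the paper notes, but it needs to be said. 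Your handling of R3 and of R1$'$ via the bijections $f,g$ matches the paper's argument.
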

\begin{proof}
{To prove this result, we} need to show that $I_\Phi$ respects the relations of Theorem \ref{sliced-tangles}. Invariance under level preserving isotopy is obvious. Let us now address, for the unframed case, the moves $R0A-D$, $R1$, $R2A-C$, $R3$ of Figure \ref{tangle-relations}. For each relation we fix matching colours on the maximum number of arcs connecting to the exterior, and then show that the corresponding morphisms of $\mathcal{C}(\Gc)$ are equal (and in some cases, that the remaining arcs connecting to the exterior are also coloured compatibly). Thus for a pair of diagrams related by one of the relations, each term in the expression for $I_\Phi$ for one diagram has a corresponding term in the expression for $I_\Phi$ for the other diagram, and the evaluations are equal term by term. 
\begin{figure}
\centerline{\relabelbox 
\epsfysize 9cm 
\epsfbox{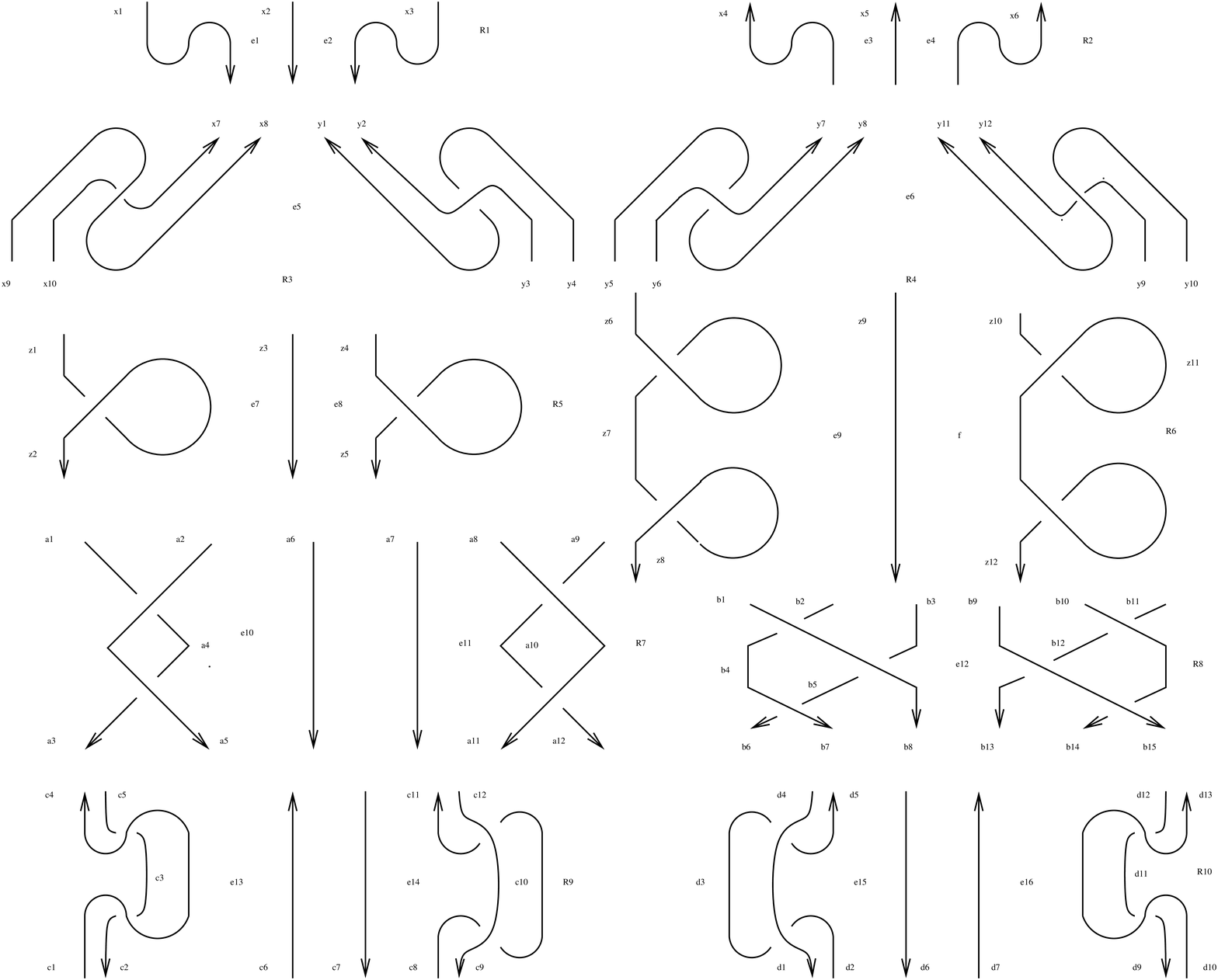}
\relabel{e1}{$\s{\cong}$}
\relabel{e2}{$\s{\cong}$}
\relabel{e3}{$\s{\cong}$}
\relabel{e4}{$\s{\cong}$}
\relabel{e5}{$\s{\cong}$}
\relabel{e6}{$\s{\cong}$}
\relabel{e7}{$\s{\cong}$}
\relabel{e8}{$\s{\cong}$}
\relabel{e9}{$\s{\cong}$}
\relabel{e10}{$\s{\cong}$}
\relabel{f}{$\s{\cong}$}
\relabel{e11}{$\s{\cong}$}
\relabel{e12}{$\s{\cong}$}
\relabel{e13}{$\s{\cong}$}
\relabel{e14}{$\s{\cong}$}
\relabel{e15}{$\s{\cong}$}
\relabel{e16}{$\s{\cong}$}
\relabel{R1}{$\s{R0A}$}
\relabel{R2}{$\s{R0B}$}
\relabel{R3}{$\s{R0C}$}
\relabel{R4}{$\s{R0D}$}
\relabel{R5}{$\s{R1}$}
\relabel{R6}{$\s{R1'}$}
\relabel{R7}{$\s{R2A}$}
\relabel{R8}{$\s{R3}$}
\relabel{R9}{$\s{R2B}$}
\relabel{R10}{$\s{R2C}$}
\relabel{x1}{$\s{X}$}
\relabel{x2}{$\s{X}$}
\relabel{x3}{$\s{X}$}
\relabel{x4}{$\s{X}$}
\relabel{x5}{$\s{X}$}
\relabel{x6}{$\s{X}$}
\relabel{x7}{$\s{Y}$}
\relabel{x8}{$\s{X}$}
\relabel{x9}{$\s{X}$}
\relabel{x10}{$\s{Z}$}
\relabel{y1}{$\s{Y}$}
\relabel{y2}{$\s{X}$}
\relabel{y3}{$\s{X}$}
\relabel{y4}{$\s{Z}$}
\relabel{y5}{$\s{Z}$}
\relabel{y6}{$\s{X}$}
\relabel{y7}{$\s{X}$}
\relabel{y8}{$\s{Y}$}
\relabel{y9}{$\s{Z}$}
\relabel{y10}{$\s{X}$}
\relabel{y11}{$\s{X}$}
\relabel{y12}{$\s{Y}$}
\relabel{z1}{$\s{W}$}
\relabel{z2}{$\s{X}$}
\relabel{z3}{$\s{X}$}
\relabel{z4}{$\s{X}$}
\relabel{z5}{$\s{Y}$}
\relabel{z6}{$\s{X}$}
\relabel{z7}{$\s{Y}$}
\relabel{z8}{$\s{Z}$}
\relabel{z9}{$\s{Z}$}
\relabel{z10}{$\s{W}$}
\relabel{z11}{$\s{V}$}
\relabel{z12}{$\s{Z}$}
\relabel{a1}{$\s{W}$}
\relabel{a2}{$\s{X}$}
\relabel{a3}{$\s{Y}$}
\relabel{a4}{$\s{Z}$}
\relabel{a5}{$\s{X}$}
\relabel{a6}{$\s{Y}$}
\relabel{a7}{$\s{X}$}
\relabel{a8}{$\s{Y}$}
\relabel{a9}{$\s{U}$}
\relabel{a10}{$\s{T}$}
\relabel{a11}{$\s{Y}$}
\relabel{a12}{$\s{X}$}
\relabel{b1}{$\s{T}$}
\relabel{b2}{$\s{V}$}
\relabel{b3}{$\s{U}$}
\relabel{b4}{$\s{Y}$}
\relabel{b5}{$\s{Z}$}
\relabel{b6}{$\s{X}$}
\relabel{b7}{$\s{Y}$}
\relabel{b8}{$\s{T}$}
\relabel{b9}{$\s{T}$}
\relabel{b10}{$\s{V}$}
\relabel{b11}{$\s{Z}$}
\relabel{b12}{$\s{W}$}
\relabel{b13}{$\s{X}$}
\relabel{b14}{$\s{Y}$}
\relabel{b15}{$\s{T}$}
\relabel{c1}{$\s{X}$}
\relabel{c2}{$\s{Y}$}
\relabel{c3}{$\s{Z}$}
\relabel{c4}{$\s{X}$}
\relabel{c5}{$\s{W}$}
\relabel{c6}{$\s{X}$}
\relabel{c7}{$\s{Y}$}
\relabel{c8}{$\s{U}$}
\relabel{c9}{$\s{Y}$}
\relabel{c10}{$\s{T}$}
\relabel{c11}{$\s{X}$}
\relabel{c12}{$\s{Y}$}
\relabel{d1}{$\s{X}$}
\relabel{d2}{$\s{W}$}
\relabel{d3}{$\s{Y}$}
\relabel{d4}{$\s{X}$}
\relabel{d5}{$\s{Z}$}
\relabel{d6}{$\s{X}$}
\relabel{d7}{$\s{Z}$}
\relabel{d9}{$\s{X}$}
\relabel{d10}{$\s{Z}$}
\relabel{d11}{$\s{T}$}
\relabel{d12}{$\s{U}$}
\relabel{d13}{$\s{Z}$}
\endrelabelbox}
\caption{Tangle relations with $G$-assignments to the arcs.\label{tangle-relations-col}
}
\end{figure}

 R0A and R0B: Fix $X\in G$. The corresponding equation in $E$ is $1_E=1_E$ in each case.

 R0C: Fix $X,Y\in G$. The corresponding equation in $E$:
\[
(X^{-1}Z^{-1})\tr \psi(X,Y) = (Y^{-1} X^{-1}) \tr \psi(X,Y)
\]
is an identity which follows from:
\[
\psi(X,Y) = (\d \psi(X,Y))\tr \psi(X,Y) = (XYX^{-1}Z^{-1}) \tr \psi(X,Y).
\]

R0D: Fix $X,Y\in G$. The corresponding equation in $E$:
\[
(Z^{-1}X^{-1})\tr \phi(X,Y) = (X^{-1} Y^{-1}) \tr \phi(X,Y)
\]
is an identity which follows from:
\[
\phi(X,Y) = (\d \phi(X,Y))\tr \phi(X,Y) = (YXZ^{-1}X^{-1}) \tr \phi(X,Y).
\]

 R2A: Fix $X,Y\in G$. The corresponding equation in $E$ is:
\begin{equation}
\phi(X,Y) \psi(X,Z)= 1_E = \psi(Y,X)\phi(Y,T) 
\label{R2v2}
\end{equation}
where $Z=X^{-1}\d \phi(X,Y)^{-1}YX$ and $T=\d \psi(Y,X)^{-1}YXY^{-1}$.
The {1st} equality is (\ref{R2}), and the {2nd} equality follows from the {1st}:
$\psi(X,Z)\phi(X,Y)=1_E$ with $Z=X^{-1} \d \phi(Y,X)^{-1}XY$, i.e. 
\[Y= \d \phi (X,Y)XZX^{-1}= 
\d \psi(X,Z)^{-1} XZX^{-1},\] then substitute variables $X\mapsto Y, \, Z\mapsto X, \, Y\mapsto T$. Applying $\d$ to (\ref{R2v2}), it follows that $W=Y$ and $U=X$.

R1: Fix $X\in G$. The corresponding equation in $E$ is:
\[
\psi(X,X)=1_E = \phi(X,Y), \quad \d \phi(X,Y) =YX^{-1}
\]
The {1st} equality is (\ref{R1}), implying $W=X$, and the {2nd} equality follows from (\ref{R1}) and (\ref{R2v2}), {see just below, which {also} implies $Y=X$.}
\[
\phi(X,Y)=\phi(X,Y)\psi(X,X)\phi(X,X)=\phi(X,X)=1_E.
\]


R2B: Fix $X,Y \in G$. The corresponding equation in $E$ for {the left move is:}
\[
X^{-1}\tr \phi(X,Y)\, . \, X^{-1} \tr \psi(X,Z) = 1_E
\]
with $Z=X^{-1}\d \phi(X,Y)^{-1}YX$, which is the {1st} equality in (\ref{R2v2}). 
Applying $\d$, it follows that $W=Y$.
The equation in $E$ for the move on the right:
\[
U^{-1} \tr \psi(Y,T) \, . \, X^{-1} \tr \phi(Y,X) = 1_E
\]
with $T=Y^{-1}\d \phi(Y,X)^{-1}XY$ and $U=\d \psi(Y,T)^{-1}YTY^{-1}$, { follows from the {2nd} equality of (\ref{R2v2})
for $X=\d \psi(Y,T)^{-1}YTY^{-1}$,} and therefore  $T=Y^{-1} \d \psi(Y,T)XY$, or $T = Y^{-1}\d \phi(Y,X)^{-1}XY$, by acting with 
$X^{-1}=U^{-1}$.


 R2C: Fix $X,Z \in G$. The corresponding equation in $E$:
\begin{equation}
Y^{-1}\tr\phi(X,Y) \, . \, Y^{-1}\tr\psi(X,Z) = 1_E = Z^{-1}\tr \psi(Z,X) \, . \, Z^{-1}\tr \phi(Z,T)
\label{r2c}
\end{equation}
with $Y=\d\psi(X,Z)^{-1}XZX^{-1}$ and $T=\d\psi(Z,X)^{-1}ZXZ^{-1}$, is equivalent to (\ref{R2v2}) with $Y$ substituted by $Z$ in the {2nd} equality.  Applying $\d$ to (\ref{r2c}), it follows that $W=Z$ and $U=X$.


 R3: Fix $X,Y,T \in G$. The corresponding equation in $E$ is the Reidemeister 3 equation (\ref{R3}), which also implies the equality $U=Z$, by applying $\d$. 

Invariance under the identity and interchange moves of Figure \ref{tensor} is immediate - to show the latter we use the interchange law for the operations of Figures \ref{ver} and \ref{hor}.

 For framed tangles we need to show invariance of $I_\Phi$ under the $R1'$ move using the properties (i) and (ii) of Definition \ref{frp}, which replace the Reidemeister 1 condition (\ref{R1}).

Fix $Z\in G$. For the move on the left, at the lower crossing we have, from (i), the relation $Y=g(Z)$, and at the upper crossing we have the relation $X=f(g(Z))=Z$, by (ii).
The equation for the move, {which is
$
\psi(Z,Z)\, \phi(Z,Y)=1_E,
$}
follows from the {2nd} equality in (\ref{R2v2}).
For the move on the right, at the lower crossing we have, from (i), the relation $V=f(Z)$, and at the upper crossing we have the relation $W=g(f(Z))=Z$, by (ii).
The equation for the move, {namely
$
\phi(V,Z)\, \psi(V,V)=1_E
$}
follows from the {1st} {equality in (\ref{R2v2}).  }
\end{proof}

\noindent {We close this section by stating a TQFT property of the invariant $I_\Phi$, which follows easily from the definition.} 
\begin{Theorem}
Let $D_1$ and $D_2$ be tangle diagrams, so that the vertical composition $\begin{array}{|l|}\hline D_1 \\ \hline D_2\\ \hline\end{array}$ is well defined. {For any enhancements $\w$ and $\w''$ of the top of $D_1$ and the bottom of $D_2$ we have:}
\[\left \langle \w \left | I_\Phi\left(\,\,\begin{array}{|l|}\hline D_1 \\ \hline D_2\\ \hline\end{array}\,\,\right) \right |\w'' \right \rangle=\sum_{\w'} \begin{array}{|l|}\hline \langle \w|I_\Phi(D_1)|\w' \rangle\\\hline \langle \w'|I_\Phi(D_2)|\w''\rangle\\ \hline \end{array} , \]
{where the sum extends over all possible enhancements $\w'$ of the intersection of $D_1$ with $D_2$.}
\end{Theorem}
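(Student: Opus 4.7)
The plan is to unwind the state sum definition of $I_\Phi$ on the composite diagram $\begin{array}{|l|}\hline D_1 \\ \hline D_2\\ \hline\end{array}$ and show that it factors through an intermediate sum over enhancements of the horizontal interface between $D_1$ and $D_2$. First I would observe that a Reidemeister $\Gc$-colouring of the composite diagram with boundary data $\w$ on top and $\w''$ on the bottom is the same data as a pair consisting of a Reidemeister $\Gc$-colouring $F_1$ of $D_1$ with top $\w$ and bottom some $\w'$, together with a Reidemeister $\Gc$-colouring $F_2$ of $D_2$ with top $\w'$ and bottom $\w''$, such that $F_1$ and $F_2$ agree on the endpoints constituting the common interface. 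This is immediate because the colouring of each arc, and hence of each crossing via $\psi$ and $\phi$, is already determined locally by the Reidemeister pair conditions, so no global constraint is lost or gained when one cuts the diagram along a generic horizontal line.

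Next I would use the fact that the set of $G$-enhancements $\w'$ of the intersection of $D_1$ with $D_2$ partitions the set $C_\Phi\bigl(\begin{array}{|l|}\hline D_1 \\ \hline D_2\\ \hline\end{array},\w,\w''\bigr)$ into disjoint pieces, each in bijection with $C_\Phi(D_1,\w,\w')\times C_\Phi(D_2,\w',\w'')$. Summing the evaluations $e(F)$ over the composite set therefore becomes a double sum over $\w'$ and over pairs $(F_1,F_2)$.

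The final step is to note that the evaluation $e(F)$ was defined (Definition \ref{ev2}) precisely by composing vertically and tensoring horizontally the morphisms of $\C(\Gc)$ associated to the elementary pieces. Consequently $e(F)=e(F_2)\circ e(F_1)$ whenever $F$ is the concatenation of $F_1$ and $F_2$ along a matching interface $\w'$. Since vertical composition in $\C(\Gc)$ distributes over the $\Z$-linear combinations on each side, the double sum factors as
\[
\sum_{\w'}\ \sum_{F_1\in C_\Phi(D_1,\w,\w')}\ \sum_{F_2\in C_\Phi(D_2,\w',\w'')} e(F_2)\circ e(F_1) = \sum_{\w'} \langle \w'|I_\Phi(D_2)|\w''\rangle \circ \langle \w|I_\Phi(D_1)|\w'\rangle,
\]
which is the claimed identity (the boxed notation in the statement merely displays this vertical composition).

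Nothing here is really hard; the only point to be careful about is the bookkeeping of which endpoint of the interface carries which element of $G \sqcup G^{\ast}$ (the orientation convention of Figure \ref{etangle}) and thus that the gluing is along the entire interface word $\w'$, not just its evaluation $e(\w')$. This just requires noting that the Reidemeister $\Gc$-colourings are indexed by $G$-enhanced diagrams, not by objects of $\C(\Gc)$, so summing over $\w'$ recovers every way a colouring could cross the interface.
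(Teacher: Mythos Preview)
Your argument is correct and is exactly the unwinding the paper has in mind: the paper itself offers no proof beyond the remark that the identity ``follows easily from the definition'', and your three steps (bijection between colourings of the composite and matched pairs of colourings indexed by the interface word $\w'$, factorisation $e(F)=e(F_2)\circ e(F_1)$ from Definition~\ref{ev2}, and bilinearity of vertical composition in $\Cl(\Gc)$) are precisely that easy verification.
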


\section{Examples }\label{examples}
\subsection{Examples derived from racks and quandles}\label{rackquandles}

\subsubsection{Rack and quandle invariants of links}
Recall that a rack $R$ is given by a set $R$ together with two (by the axioms not independent) operations $(x,y) \in R \times R\mapsto x\tr y \in R$  and $(x,y)\in R\times R\mapsto x \tl y \in R$, such that for each $x,y,z\in R$:{
\begin{enumerate}
\begin{minipage}{0.5 \textwidth}
 \item $x\tr ( y \tl x)=y,$
\item $(x \tr y)\tl x=y,$
\end{minipage}
\begin{minipage}{0.5 \textwidth}
\item  \label{condp}$x \tr (y \tr z)=(x \tr y) \tr (x \tr z),$
\item \label{cond} $(x \tl y) \tl z=(x \tl z) \tl (y \tl z)$.
\end{minipage}
\end{enumerate}}
A quandle $Q$ is a rack satisfying {the extra condition:
 $x\tl x=x=x \tr x$, $\forall x \in Q$.}

There is a more economical definition of a rack: it is a set $R$ with an operation $(x,y) \mapsto x \tl y$, such that condition \eqref{cond} above holds and such that for each $y \in R$ the map $ x \mapsto x \tl y$ is bijective. {Its inverse } will give us the map $x \mapsto y \tr x$.
The following result and proof appeared in \cite{N}.
\begin{lemma}[Nelson Lemma]\label{Nelson}
Given a rack $R$, the maps $x \mapsto x \tr x$ and $x \mapsto x \tl x$ are injective (thus bijective if the rack is finite.)
\end{lemma}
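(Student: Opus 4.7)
The plan is to exploit the self-distributivity axioms together with the bijectivity of right (resp.\ left) translations. Write $f(x) = x \tr x$ and $g(x) = x \tl x$. Recall that the axioms (1) and (2) imply that for each fixed $x \in R$ the maps $y \mapsto x \tr y$ and $y \mapsto y \tl x$ are mutually inverse bijections of $R$, a fact which will do all the real work.

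The key observation is that self-distributivity forces $f(x)$ (resp.\ $g(x)$) to act in the same way as $x$ under left (resp.\ right) translation. For $\tl$, setting $z = y$ in axiom (4) gives $(x \tl y) \tl y = (x \tl y) \tl (y \tl y)$ for every $x$; since $\cdot \tl y$ is surjective, $x \tl y$ ranges over all of $R$, so we obtain the functional identity $\cdot \tl y = \cdot \tl g(y)$. Dually, setting $y = x$ in axiom (3) gives $x \tr (x \tr z) = f(x) \tr (x \tr z)$ for all $z$; using that $x \tr \cdot$ is surjective we conclude $x \tr \cdot = f(x) \tr \cdot$.

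Injectivity then follows by a one-line cancellation. Suppose $g(x) = g(y)$. Using the previous paragraph twice,
\[
\cdot \tl x \;=\; \cdot \tl g(x) \;=\; \cdot \tl g(y) \;=\; \cdot \tl y,
\]
so in particular $x \tl y = x \tl x = g(x) = g(y) = y \tl y$; bijectivity of $\cdot \tl y$ yields $x = y$. Symmetrically, if $f(x) = f(y)$ then $x \tr \cdot = f(x) \tr \cdot = f(y) \tr \cdot = y \tr \cdot$, and evaluating at $x$ gives $y \tr x = x \tr x = f(x) = f(y) = y \tr y$, so bijectivity of $y \tr \cdot$ gives $x = y$.

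There is no serious obstacle: the only subtlety is to remember to use the surjectivity half of the bijectivity of the translations (to turn a distributive identity holding on the image of a translation into a genuine functional identity). The finiteness clause is automatic, since an injective self-map of a finite set is a bijection.
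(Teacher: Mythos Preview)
Your proof is correct and follows essentially the same approach as the paper's. Both arguments establish the key functional identity $f(x)\tr\cdot = x\tr\cdot$ (respectively $\cdot\tl g(y) = \cdot\tl y$) via self-distributivity together with surjectivity of the relevant translation, and then use this to cancel from $f(x)=f(y)$ down to $x=y$; the only cosmetic difference is that the paper writes an arbitrary element explicitly as $x\tr(y\tl x)$ where you invoke surjectivity of $x\tr\cdot$ abstractly.
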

\begin{proof}
Let $x$ and $y$ {belong to} $R$. Then 
\[(x \tr x) \tr y=(x \tr x) \tr ( x \tr (y \tl x))=x \tr(x \tr ( y \tl x))=x \tr y.\]
If $x \tr x=y \tr y$ then
{$ x \tr x=(x \tr x) \tr x=( y \tr y) \tr x= y \tr x.
$}
Since $y \tr y=x \tr x$ then $y \tr y=y \tr x$. This implies $x =y$, since the map $x \mapsto y \tr x$ is bijective, its inverse being $x \mapsto x \tl y$. The proof for $x \mapsto x \tl x$ is analogous.
 \end{proof}

Given a knot diagram $D$, and a rack $R$, a rack colouring of $D$ is an assignment of an element of $R$ to each arc of $D$, which at each crossing of the projection has the form shown in figure \ref{rackcolouring}.
\begin{figure}
\centerline{\relabelbox 
\epsfysize 2cm
\epsfbox{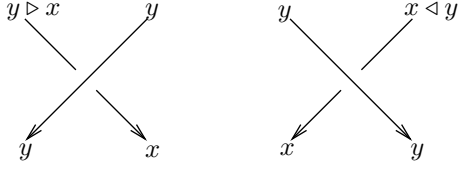}
\relabel{A}{${y \tr x}$}
\relabel{B}{${y}$}
\relabel{C}{${y}$}
\relabel{D}{${x }$}
\relabel{E}{${y}$}
\relabel{F}{${x \tl y}$}
\relabel{G}{${x}$}
\relabel{H}{${y}$}
\endrelabelbox}
\caption{A rack colouring of a link diagram in the vicinity of a vertex.
}
\label{rackcolouring}
\end{figure}

The following is well known. 
\begin{Theorem}\label{rli}
 Let $R$ be a finite rack. Then the number $I_R(D)$ of rack colourings of a link diagram $D$ is invariant under the Reidemeister moves 1', 2 and 3, and is therefore an invariant of framed links, also called $I_R$. Moreover if $R$ is a quandle then the number of rack colourings is invariant under the Reidemeister 1 move, therefore defining a link invariant.
\end{Theorem}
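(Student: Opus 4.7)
The plan is to prove invariance by exhibiting, for each Reidemeister move in the relevant list (R2, R3, and R1' for the framed case; R1 additionally for the quandle case), a bijection between the rack colourings of the two diagrams involved. Since rack colourings are determined locally at each crossing, it suffices to fix the colours of the arcs meeting the boundary of the disc in which the move is performed, and check that there is a unique way to extend these to colourings on each side, with matching colourings of the boundary arcs. A direct global count then yields $I_R(D) = I_R(D')$ whenever $D$ and $D'$ differ by one of these moves.

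For R2, fix the colours $x,y$ on the two strands entering the local region where the strands do not cross. On the other side, at the first crossing one strand is recoloured by one of the rack operations $\triangleright$ or $\triangleleft$, and at the second crossing the opposite operation is applied. The rack axioms $x\triangleright(y\triangleleft x)=y$ and $(x\triangleright y)\triangleleft x=y$ (equivalently, the bijectivity of $x\mapsto x\triangleleft y$) guarantee that the interior arc has a unique colour compatible with $x$ and $y$ and that the outgoing colours match. For R3, fix the three incoming colours $x,y,z$ from the bottom and compute the three outgoing colours on the top by iteratively applying the crossing rule on each side; the self-distributivity axioms \eqref{condp} and \eqref{cond}, together with the compatibility between $\triangleright$ and $\triangleleft$, imply that both sides produce the same triple of outgoing colours, hence the desired bijection. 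The numbering of the rack axioms was chosen precisely so that these three moves are encoded one-to-one.

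For the framed move R1', a pair of opposite kinks on a strand coloured $x$ is to be matched with a straight strand coloured $x$. Running the rack rule through a kink sends the colour $x$ to $x\triangleright x$ or $x\triangleleft x$ depending on the sign of the kink, and the second, opposite kink undoes this, so the two outgoing arcs of the local region carry the same colour on both sides. The finiteness assumption here combines with Lemma \ref{Nelson} (the Nelson Lemma) to ensure that the self-operations are bijections, so that each side of R1' admits a unique extension given the boundary colouring. For the unframed R1 move in the quandle case, the single kink on a strand coloured $x$ must simply agree with a straight strand coloured $x$; this is exactly the quandle axiom $x\triangleright x = x = x\triangleleft x$, which is therefore both necessary and sufficient to upgrade the framed link invariant to a genuine link invariant. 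The main obstacle is the R3 verification, where one must carefully track six colours through three crossings on each side and invoke the self-distributivity axioms in the correct direction; the other moves reduce essentially to one rack axiom each.
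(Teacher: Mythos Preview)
Your proposal is correct and follows essentially the same approach as the paper. The paper does not give a detailed proof but refers to \cite{N} and \cite{FR}, noting parenthetically that invariance under R2 and R3 is immediate from the rack axioms while invariance under R1' follows from the Nelson Lemma (Lemma~\ref{Nelson}); your argument is precisely a fleshing-out of this sketch, with the quandle axiom handling R1 in the unframed case.
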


For a proof see \cite{N} or \cite{FR}. (Invariance under the Reidemeister moves 2 and 3 is immediate. Invariance under the Reidemeister 1' move, which is less trivial, is a consequence of the Nelson Lemma.)

\subsubsection{Reidemeister pairs derived from racks and quandles}\label{rpdrq}
Let us see that rack and quandle invariants can be written in the framework of this article. Let $R$ be a rack, which we suppose to be finite. Consider an arbitrary group structure on $R$. Call the group $G$. We do not impose any compatibility relation with the rack operations, we just assume that the underlying set of $R$ coincides with the underlying set of $G$. Consider the crossed module {$\Gc= (\id\colon G  \to G, {\rm ad})$,} where ${\rm ad}$ denotes the adjoint action of $G$ on $G$.  Put:
\begin{align}
 \psi({b},{a} )&={b}\,\,  {a}  \,\, {b}^{-1} ({b} \tr {a})^{-1}\,\,,  &
 \phi({b}, {a})&= {a}\,\,{b} \,\,({a} \tl {b})^{-1} \,\,{b}^{-1}.
\end{align}
\begin{Theorem}\label{racktophi}
 The pair $\Phi=(\p,\f)$ is a framed Reidemeister pair. Moreover $\Phi$ is an unframed Reidemeister pair if $R$ is a quandle.
\end{Theorem}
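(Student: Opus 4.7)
The plan is to verify the axioms of Definitions \ref{rp} and \ref{frp} by direct computation, exploiting that $\d=\id$ and $g\tr e=geg^{-1}$. A preliminary check using \eqref{Zforpsi} and \eqref{Zforphi}, together with the simple identities $\psi(X,Y)^{-1}XYX^{-1}=X\tr Y$ and $X^{-1}\phi(X,Y)^{-1}YX=Y\tl X$, shows that the third arc-colour $Z$ at an $X_+$ crossing with inputs $(X,Y)$ is $X\tr Y$, and at an $X_-$ crossing with inputs $(X,Y)$ is $Y\tl X$. These match the rack colouring convention of Figure \ref{rackcolouring} exactly, which is precisely what motivates the choice of $\psi$ and $\phi$.

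The R2 identity \eqref{R2} becomes a one-line check: substituting $Z=Y\tl X$ and using the rack axiom $X\tr(Y\tl X)=Y$ one gets $\psi(X,Y\tl X)=X(Y\tl X)X^{-1}Y^{-1}$, and the product $\phi(X,Y)\,\psi(X,Y\tl X)$ telescopes to $1_E$. For R1 in the quandle case, $\psi(X,X)=X\cdot X\cdot X^{-1}(X\tr X)^{-1}=1_E$ since $X\tr X=X$. The main computational step is R3. With the substitutions $Z=X\tl Y$, $V=Y\tl T$, $W=X\tl T$ obtained from the preliminary calculation, I would expand both sides of \eqref{R3} using the explicit formulas for $\phi$ and the adjoint action, cancelling adjacent inverse pairs. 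After the cancellation $Y^{-1}Y=1$ in the middle factor and the subsequent clean-ups, the left-hand side collapses to
\[
XYT\cdot\bigl((X\tl Y)\tl T\bigr)^{-1}(Y\tl T)^{-1}T^{-1}.
\]
For the right-hand side, the key move is to use the rack axiom $(X\tl Y)\tl T=(X\tl T)\tl(Y\tl T)$ inside $\phi(Y\tl T,X\tl T)$; the same sort of cancellations then reduce the right-hand side to the same expression.

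For the framed conditions (i) and (ii) of Definition \ref{frp}, I would compute $\d\phi(A,Z)A=ZA(Z\tl A)^{-1}$, so that $\d\phi(A,Z)A=Z$ is equivalent to $Z\tl A=A$ and, by the rack axiom $A\tr(Z\tl A)=Z$, to $Z=A\tr A$. The Nelson Lemma (Lemma \ref{Nelson}) then supplies existence and uniqueness of the inverse image of $Z$ under the bijection $A\mapsto A\tr A$, which is $f(Z)$. A parallel calculation gives $g(A)=\d\psi(A,A)^{-1}A=(A\tr A)A^{-1}\cdot A=A\tr A$, so $g$ is precisely the map $A\mapsto A\tr A$ and hence the inverse of $f$. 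The main obstacle is the bookkeeping in R3, but no new algebra beyond the rack axioms and plain group cancellations is required.
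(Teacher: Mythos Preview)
Your proof is correct and follows essentially the same route as the paper's: the same substitutions $Z=Y\tl X$, $Z=X\tl Y$, $V=Y\tl T$, $W=X\tl T$, the same telescoping cancellation for R2, the same reduction of the framed condition to $Z=A\tr A$ via the Nelson Lemma, and the same R1 check for quandles. The only cosmetic difference is that you spell out the R3 computation explicitly (reducing both sides to $XYT\,((X\tl Y)\tl T)^{-1}(Y\tl T)^{-1}T^{-1}$ and invoking the self-distributivity axiom $(X\tl Y)\tl T=(X\tl T)\tl(Y\tl T)$), whereas the paper simply says the identity ``follows easily'' from the rack axiom.
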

 \begin{proof}
  In this case relation $R{2}$ reads,
{$\phi({b}, {a}) \psi({b},{a}\tl {b})=1 $}
which follows tautologically. The relation $R{3}$ reads in this case:
\[
\phi({b},{a}) \,.\, {b} \phi({c},{a} \tl {b}) {b}^{-1} \,.\, \phi({c},{b}) = 
{a} \phi({c},{b}){a}^{-1} \,.\, \phi({c},{a}) \,.\, {c} \phi({b}\tl {c}, {a}\tl {c}) {c}^{-1},
\]
and follows easily, from relation \eqref{condp} of the definition of a rack. 

Let us now prove  relations 1. and 2. of Definition \ref{frp}.
Let ${a} \in G$. The equation ${z}=\partial(\phi({a},{z}))\,{a}$ means
{${z}={z}\,{a}\, ({z} \tl {a})^{-1} , $
or ${z} \tl {a}={a}$, that is ${z}={a} \tr {a}$.} By the Nelson Lemma \ref{Nelson}, for each ${z}$ the equation ${z}={a} \tr {a}$ has a unique solution $f({z}) \in R$. In this case $g({a})=\d(\psi({a},{a}))^{-1}{a}={a} \tr {a}$. Thus trivially $f\circ g=g \circ f=\id_R$. 

{Finally if $R$ is a quandle then $\psi(x,x)=x \, (x \tr x)^{-1}=x \, x^{-1}=1_E$.} \end{proof}

Since there is clearly, {by \eqref{psiphidef}, \eqref{Zforpsi} and \eqref{Zforphi}, a one-to-one correspondence between $\Gc$-colourings of a link diagram $D$ and rack colourings (with respect to $R$) of $D$, we have:}
\begin{Theorem}\label{rackex}
 Given a link diagram $D$,
{we have $I_\Phi(D)=I_R(D) 1_G,$}
where $1_G$ is the identity of $G$.
Therefore the class of invariants defined in this paper is at least as strong as {the class of rack link invariants.}
\end{Theorem}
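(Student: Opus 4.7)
My plan rests on two observations, one about evaluations and one about the colouring correspondence.

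First, I would exploit the very special form of the crossed module $\Gc = (\id\colon G \to G, \ad)$. Since $\d = \id$, the kernel $A = \ker \d$ is trivial, i.e.\ $A = \{1_G\}$. By the Remark following Definition \ref{ev2}, for a link diagram $D$ every $\Gc$-evaluation $e(F)$ lies in $A$, hence $e(F) = 1_G$ for every Reidemeister $\Gc$-colouring $F \in C_\Phi(D)$. Consequently
\[
I_\Phi(D) = \sum_{F \in C_\Phi(D)} e(F) = \lvert C_\Phi(D) \rvert \cdot 1_G,
\]
and it remains only to show that $|C_\Phi(D)| = I_R(D)$, i.e.\ that Reidemeister $\Gc$-colourings of $D$ are in bijection with rack colourings of $D$ by $R$.

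For the bijection, I would go crossing by crossing and show that the $G$-assignments to the four incident arcs forced by the Reidemeister rule with the given $\psi, \phi$ coincide with those of a rack colouring. At a positive crossing with over-arc labelled $X$ and lower under-arc labelled $Y$ (conventions of \eqref{GC-col}), formula \eqref{Zforpsi} combined with $\d = \id$ gives $Z = \psi(X,Y)^{-1} X Y X^{-1}$. Substituting the definition $\psi(X,Y) = X\,Y\,X^{-1}\,(X \tr Y)^{-1}$ yields $Z = X \tr Y$, which is precisely the rack rule for the upper under-arc in Figure \ref{rackcolouring}. Similarly, at a negative crossing, \eqref{Zforphi} with $\phi(X,Y) = Y X (Y \tl X)^{-1} X^{-1}$ (using the conventions $\phi(b,a) = a\,b\,(a \tl b)^{-1}\,b^{-1}$ as in subsection \ref{rpdrq}) gives $Z = Y \tl X$, matching the rack rule for negative crossings. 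Since each arc of $D$ is freely coloured by an element of $G = R$ subject only to these per-crossing constraints, which reproduce the rack-colouring constraints exactly, the bijection $C_\Phi(D) \leftrightarrow \{\text{rack colourings of } D\}$ is immediate.

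Combining the two observations, $I_\Phi(D) = |C_\Phi(D)| \cdot 1_G = I_R(D) \cdot 1_G$. The final sentence of the theorem then follows formally: given any rack invariant $I_R$ of (framed) links obtained as in Theorem \ref{rli}, the construction of this paper applied to $\Gc = (\id\colon G \to G, \ad)$ with the Reidemeister pair of Theorem \ref{racktophi} recovers it as the coefficient of $1_G$ in $I_\Phi(D)$. The only mildly technical step is the explicit crossing-by-crossing verification in the previous paragraph; I do not anticipate a genuine obstacle, as both rules arise directly from inverting $\d \psi$ and $\d \phi$ under $\d = \id$.
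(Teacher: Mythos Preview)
Your proposal is correct and follows essentially the same approach as the paper: the paper's proof is a one-line assertion that, by \eqref{psiphidef}, \eqref{Zforpsi} and \eqref{Zforphi}, there is an obvious one-to-one correspondence between Reidemeister $\Gc$-colourings and rack colourings of $D$, whereas you spell out both that correspondence and the reason (via $\ker\d=\{1_G\}$) why every evaluation collapses to $1_G$. Your crossing-by-crossing verification is exactly the computation the paper leaves implicit.
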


There is a spin-off of the rack invariant in order to handle tangles. Given a rack $R$, recall that an $R$-enhanced tangle, Definition \ref{ent}, is a tangle together with a map from the boundary of $T$ into (the underlying set of) $R$. There is a category whose objects are the words $\w$ in $R \sqcup R^*$ and whose morphisms are $R$-enhanced tangles connecting them. Thus if the word $\w$ is the source of $T$ then $\w$ is a word having $i$ elements, where $i$ is the number of intersections of the tangle with $\R^2 \times \{1\}$. Moreover the $n^{\rm th}$ element of $\w$ is either the colour $a \in R$ given to the $n$-th intersection, or it is   $a^*$, the former happening if the strand is pointing downwards and the latter if the associated strand is pointing upwards. These conventions were explained in figure \ref{etangle}. Given an $R$-enhanced tangle $T$ let $\w(T)$ and $\w'(T)$ be the source and target of $T$, both words in $R \sqcup R^*$. Define $\langle \w(T) |I_R(T)| \w'(T) \rangle$ as being the number of  arc-colourings of a diagram of $T$ extending the enhancement of $T$. This defines an invariant of tangles, for any choice of colourings on the top and bottom of $T$ (in other words, for any $R$-enhancement of $T$). Clearly
\begin{Theorem}\label{tanglerack}
For any $R$-enhanced tangle $T$, putting $\w=\w(T)$ and $\w'=\w'(T)$ we have {(see definitions \ref{ev1} and \ref{ev2} for notation):}
\[ \langle \w(T) |I_\Phi(T)|\w'(T)\rangle =   \langle \w |I_R(T)|\w'\rangle \quad \begin{CD} e(\w)\\@VV e(\w') e(\w)^{-1}V \\ e(\w')\end{CD}.\]
\end{Theorem}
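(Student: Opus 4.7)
The plan is to combine two ingredients: the one-to-one correspondence between Reidemeister $\Gc$-colourings and rack colourings (already established for link diagrams immediately before Theorem \ref{rackex}), and a very special feature of the crossed module $\Gc=(\id\colon G\to G,\ad)$ used in Theorem \ref{racktophi}, namely that each Hom-set of $\C(\Gc)$ contains a single element.

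First I would check that the bijection used in Theorem \ref{rackex} extends verbatim to tangle diagrams relative to the fixed enhancements $\w,\w'$. Using the formulas for $\psi$ and $\phi$ in subsection \ref{rpdrq}, direct substitution into \eqref{Zforpsi} gives $Z=\d\psi(X,Y)^{-1}XYX^{-1}=X\tr Y$, while substitution into \eqref{Zforphi} gives $Z=X^{-1}\d\phi(X,Y)^{-1}YX=Y\tl X$. These are exactly the rack colouring relations of Figure \ref{rackcolouring}, so Reidemeister $\Gc$-colourings of a diagram $D$ of $T$ extending $\w,\w'$ are canonically identified with rack colourings of $D$ extending $\w,\w'$, which gives $|C_\Phi(D,\w,\w')|=\langle\w|I_R(T)|\w'\rangle$.

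For the second ingredient, I would exploit the shape of $\Gc$: since $\d=\id$, a morphism $U\to V$ in $\C(\Gc)$ is a pair $(U,e)$ with $\d(e)=e=VU^{-1}$, so for each ordered pair of objects $(U,V)$ there is exactly one morphism $U\to V$. By the construction of $e(F)$ in Definition \ref{ev2} together with Figure \ref{tanglemor}, the source and target of $e(F)$ are $e(\w)$ and $e(\w')$ respectively, and therefore, for every $F\in C_\Phi(D,\w,\w')$, the evaluation $e(F)$ is forced to equal the unique morphism $e(\w)\to e(\w')$, namely the arrow labelled by $e(\w')e(\w)^{-1}\in E=G$ depicted on the right-hand side of the statement. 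Summing over $F$ then gives $\langle\w(T)|I_\Phi(T)|\w'(T)\rangle=|C_\Phi(D,\w,\w')|\cdot(e(\w),e(\w')e(\w)^{-1})$, which combined with the first step is the asserted identity. The only non-bookkeeping input is the uniqueness of morphisms in $\C(\Gc)$, so no real obstacle is anticipated beyond carrying out the substitutions into $\psi$ and $\phi$.
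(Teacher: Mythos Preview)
Your proposal is correct and matches the paper's approach: the paper itself offers no proof beyond the word ``Clearly'' preceding the theorem, and your two ingredients---the bijection between Reidemeister $\Gc$-colourings and rack colourings (via the explicit $\psi,\phi$ formulae), together with the observation that $\d=\id$ forces each Hom-set in $\C(\Gc)$ to be a singleton---are precisely the obvious details one would supply. There is nothing to add.
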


\subsubsection{Reidemeister pairs derived from rack and quandle cocycles}\label{qc}
We can extend the statement of Theorem \ref{rackex} for the case of rack cohomology invariants of knots. Let $R$ be a rack. Let $V$ be an abelian group. We say that a map $w \colon R \times R \to V$ is a rack 2-cocyle if:
\[w(x,y)+w(x \tl y,z)=w(x,z)+ w(x \tl z, y \tl z), \textrm{ for each}\, x,y,z \in R. \]
If $R$ is a quandle, such a $w$ is said to be a quandle cocycle if moroever $w(x,x)=0_V$, for each $x \in R$. For details see \cite{CJKLS,CJKLS2,N,E1,E2}.

Consider any group structure  $G$ on the set $R$, which may be completely independent of the 
rack operations. Consider the crossed module $(\partial \colon G \times V \to G,\bullet)$, where {$g \bullet (h,v)=(ghg^{-1},v), \textrm{ for each } g,h \in G \textrm{ and } v \in V,$} which is a left action of $G$ on $G \times V$ by automorphisms, and $\partial(g,v)=g$, for each $(g,v) \in G \times V$. 
Given a rack 2-cocycle $w\colon R \times R \to V$, set:
\begin{align*}
 \psi({b},{a} )&=\big ({b}\,\,  {a}  \,\, {b}^{-1} ({b} \tr {a})^{-1},w({b} \tr {a},{b})\big),
 &\phi({b}, {a})&= \big ({a}\,\,{b} \,\,({a} \tl {b})^{-1} \,\,{b}^{-1}, w({a},{b})^{-1}\big).
\end{align*}
\begin{Theorem}
 The pair $\Phi=(\psi,\phi)$ is a framed Reidemeister pair. Its associated framed link invariant coincides with the usual rack cohomology invariant of framed links. Morever if $R$ is a quandle and $w$ a quandle 2-cocycle then $\Phi=(\psi,\phi)$ is an unframed Reidemeister pair and its associated invariant  of links coincides with the usual quandle cocycle link invariants \cite{CJKLS}. 
\end{Theorem}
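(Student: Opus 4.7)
The overall strategy is to exploit the direct product structure $E = G \times V$ with trivial $G$-action on the $V$-factor, so that multiplication in $E$ is componentwise. Consequently every equation in Definitions \ref{rp} and \ref{frp} splits into a $G$-component equation and a $V$-component equation. The projections of $\psi$ and $\phi$ onto the $G$-factor coincide with the Reidemeister pair of Theorem \ref{racktophi}, so all $G$-level equations already hold; only the $V$-components need verification, and this is where the 2-cocycle hypothesis on $w$ will enter.

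For R2, the $G$-level computation from Theorem \ref{racktophi} already shows that the derived variable in Definition \ref{rp} equals $Y \tl X$, so the $V$-component of $\phi(X,Y)\,\psi(X, Y \tl X)$ reads $-w(Y,X) + w(X\tr (Y \tl X), X) = -w(Y,X) + w(Y,X) = 0_V$, using the rack axiom $X \tr (Y \tl X) = Y$. Hence R2 holds tautologically, without invoking the cocycle condition. For R3, after substituting the $G$-level expressions for the derived variables (which reduce to iterated rack operations exactly as in Theorem \ref{racktophi}) and collecting the $V$-contributions on each side, the resulting $V$-equation rearranges to
\[ w(a,b) + w(a\tl b, c) = w(a,c) + w(a\tl c, b \tl c), \]
which is precisely the rack 2-cocycle identity for $w$. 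The framed-pair conditions (i) and (ii) of Definition \ref{frp} involve only $\d\phi$ and $\d\psi$; since $\d$ is projection onto the first factor of $G \times V$, they reduce verbatim to the corresponding conditions in Theorem \ref{racktophi}.

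Assuming additionally that $R$ is a quandle and $w$ is a quandle 2-cocycle, the extra relation $w(x,x) = 0_V$ is available. Then the $V$-component of $\psi(X,X)$ is $w(X\tr X, X) = w(X,X) = 0_V$, while the $G$-component already collapses to $1_G$ as in the quandle case of Theorem \ref{racktophi}. Hence $\psi(X,X) = 1_E$, establishing R1 and upgrading $\Phi$ to an unframed Reidemeister pair.

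Finally, for the identification with the standard invariants: by construction the $\Gc$-colourings of a diagram $D$ are in bijection with its rack-colourings via the $G$-projection, exactly as in Theorem \ref{rackex}. For a coloured link diagram the $G$-component of $e(F)$ collapses to $1_G$ as in Theorem \ref{rackex}, while the $V$-component, using abelianness of $V$ and triviality of the action, multiplies out to a signed sum $\sum_c \epsilon_c\, w(x_c, y_c)$ with $\epsilon_c = +1$ at positive crossings (contributed by $\psi$) and $\epsilon_c = -1$ at negative crossings (contributed by $\phi$), where $x_c, y_c$ are determined by the colouring. This is precisely the Boltzmann weight of the state-sum of \cite{CJKLS}, so $I_\Phi(D)$ reproduces the standard rack cohomology framed link invariant (resp.\ the quandle cocycle link invariant in the quandle case). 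The main obstacle to executing this plan is the bookkeeping for R3: one must carefully trace the derived variables through the rack operations and verify that the $V$-level equation is exactly the 2-cocycle identity, with no spurious additional constraints appearing from the crossed module action.
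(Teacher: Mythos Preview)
Your proposal is correct and follows precisely the route the paper intends: its proof reads in full ``Analogous to the proof of Theorems \ref{racktophi} and \ref{rackex}'', and you have faithfully unpacked that analogy by projecting each Reidemeister-pair condition onto the $G$- and $V$-factors of $E=G\times V$, invoking Theorem \ref{racktophi} for the $G$-part and isolating the cocycle identity in the $V$-part. Your bookkeeping for R3 is accurate (the $-w(Y,T)$ terms cancel, leaving exactly the rack 2-cocycle equation), and your observation that conditions (i)--(ii) of Definition \ref{frp} involve only $\partial\phi$, $\partial\psi$ and hence reduce to the $G$-level is the right shortcut.
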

\begin{proof}
Analogous to the proof of Theorems \ref{racktophi} and \ref{rackex}.
\end{proof}

\noindent Therefore the class of invariants defined in this paper is at least as strong as the class of invariants of links derived from quandle cohomology classes.

\subsection{Relation with the Eisermann knot invariant}\label{ep}

\subsubsection{String knots, long knots, knot meridians and knot longitudes}\label{sklk}
Recall that an (oriented) long knot is an embedding  $f$ of  $\R$ into $\R^3$ such that, for sufficiently large (in absolute value) $t$, we have {$f(t)=(0,0,-t)$.} These are considered up to isotopy with compact support. Clearly long knots (up to isotopy) are in one-to-one correspondence with isotopy classes of tangles whose underlying 1-manifold is the interval, and whose boundary is $\{0\} \times \{0\} \times \{\pm 1\}$, being, furthermore, oriented downwards. These are usually called string knots.

There exists an obvious {closing map, $\rm cl$,} sending a string knot $L$ to a closed knot ${\rm cl}(L)$. It is well known that this defines a one-to-one correspondence between isotopy classes of string knots and isotopy classes of oriented knots. To see this, note that a map sending a closed knot $K$ to a long knot $L_K$ can be obtained by choosing a base point $p \in K$. Then there exists an (essentially unique) orientation  (of $S^3$ and of $K$) preserving  diffeomorphism $(S^3\setminus \{p\},K \setminus \{p\}) \to (\R^3,L_K^p)$, where $L_K^p$ is a long knot with ${\rm cl}(L_K^p)=K$. Note that $L_K^p$ depends only on the orientation preserving diffeomorphism class of the triple $(S^3,K,p)$, thus since all pairs $(K,p)$, with fixed $K$, but arbitrary $p$, are isotopic we can see that 
$L_K^p$ depends only on $K$, thus we can write it as $L_K$.
\begin{figure}
\centerline{\relabelbox 
\epsfysize 1.7cm 
\epsfbox{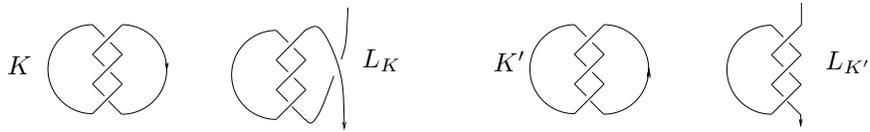}
\relabel{K}{${K}$}
\relabel{LKK}{${L_{K'}}$}
\relabel{LK}{${L_{K}}$}
\relabel{KK}{${K'}$}
\endrelabelbox}
\caption{Turning an oriented knot into a string knot in two different cases.\label{LinkTangle}
}
\end{figure}

Let $D$ be a knot diagram of the knot $K$. Consider the Wirtinger generators of the fundamental group $\pi_1(C_K)$ of the complement $C_K=S^3 \setminus n(K)$ of $K$ {(here $n(K)$ is an open regular neighbourhood of $K$)}; we thus have a meridian for any arc of the diagram\ $D$. Let $a$ be an arc of $D$ and $p$ a base point of $K$ in $a$.  Then there is a meridian $m_p=m$ of $K$ encircling {the arc $a$ at $p$}, whose direction is determined by the right hand rule. Let $D^2=\{z \in \mathbb{C}: |z|\leq 1\}$ and $S^1 = \partial D^2$. Choose an embedding $f\colon S^1\times D^2 \to S^3$ such that 
\begin{itemize}
 \item $f(S^1 \times \{0\})=K$, preserving orientations, with $f(1,0)=p$.
\item $f(\{1\} \times S^1)=m$
\item $f(S^1 \times \{1\})$ has zero linking number with $K$.
\end{itemize}
If we take  $f(1,1)$ to be the base point of $S^3$, then the homotopy class $l_p=l\in \pi_1(C_K)$  of   $f(S^1 \times \{1\})$ is called a longitude of $K$ {\cite{BZ}}. It is well known that the triple $(\pi_1(C_K), m,l)$, considered up to isomorphism, is a complete invariant of the knot $K$ \cite{W}. Note that if we choose another base point $p'$ of $K$ then $m_{p'}$ and $l_{p'}$ can be obtained from $m_p$ and $l_p$ by conjugating by a single element of $\pi_1(C_K)$.

The longitude $l_p$, being an element of the fundamental group of the complement {$C_K$ of $K$}, can certainly be expressed in terms of the Wirtinger generators. This can be done in the following way;  for details see \cite{E1,E2}. Let $a_0=a$ be the arc of the diagram $D$ of $K$ containing the base point of $K$. Then go around the knot in the direction of its orientation. This makes it possible to order the arcs of $K$, say as  $a_1,a_2,\dots, a_n$;  we would have $a_n=a_1$, except that we prefer to see $K$ as being split at the base point $p$, separating the arc $a$ in two. We can also order the crossings of $D$.

 The longitude $l_p$ of $K$ is expressed as a product of all elements of {$\pi_1(C_K)$, associated to the arcs we undercross} as we travel from $p$ to $p$, making sure that the linking number of $l_p$ with $K$ is zero. {Therefore any arc $a_i$ has also assigned a partial longitude $l_i$ (the product of the elements  of {$\pi_1(C_K)$, associated to the arcs we undercross,  as we travel from $p$ to $a_i$)}. We thus have $l_n=l$.} 
Given an arc  $a$ of $D$ denote the corresponding element of the fundamental group of the complement  by  $g_a$. Then clearly we have {that $g_{a_i}=l_i^{-1} g_{a_1} l_i.$}  The way to pass from  $l_i$ to $l_{i+1}$ appears in figure \ref{partial} for the positive and negative crossing. 

\begin{figure}
\centerline{\relabelbox 
\epsfysize 4cm
\epsfbox{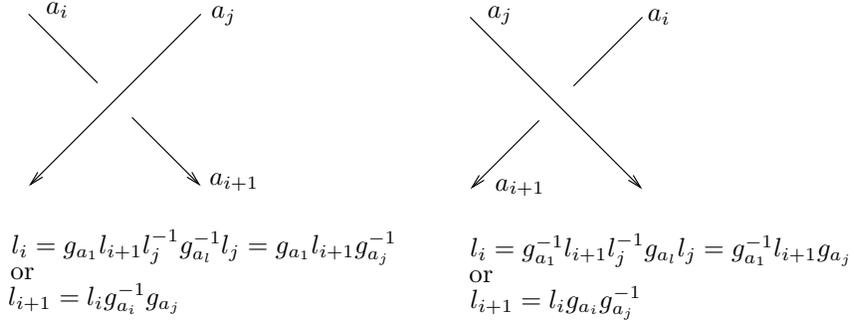}
\relabel{a}{${a_j}$}
\relabel{l}{${a_i}$}
\relabel{ll}{$a_{i+1}$}
\relabel{b}{${a_j}$}
\relabel{k}{${a_i}$}
\relabel{kk}{$a_{i+1}$}
\relabel{M}{$l_{i}=g_{a_1}l_{i+1}l_{j}^{-1}g_{a_l}^{-1}l_{j}=g_{a_1}l_{i+1}g_{a_j}^{-1}$}
\relabel{MM}{$l_{i+1}=l_i g_{a_{i}}^{-1}g_{a_j} $}
\relabel{N}{$l_{i}=g_{a_1}^{-1}l_{i+1}l_{j}^{-1}g_{a_l}l_{j}=g_{a_1}^{-1}l_{i+1}g_{a_j}$}
\relabel{NN}{$l_{i+1}=l_i g_{a_{i}}g_{a_j}^{-1} $}
\relabel{A}{or}
\relabel{P}{or}
\endrelabelbox}
\caption{Rules for partial longitudes at crossings.}
\label{partial}
\end{figure}

Given $i$, let $j_i$ be the number of the arc {splitting $a_i$ and $a_{i+1}$. Let $\theta_i$ be the sign of the $i$-th crossing. Then:}
\begin{equation}\label{longformula}
 l=\prod_{i=1}^{n-1} g_{a_i}^{-\theta_i}g_{a_{j_i}}^{\theta_i}=\prod_{i=1}^{n-1} l_i^{-1}g_{a_1}^{-\theta_i}l_i l_{j_i}^{-1} g_{a_1}^{\theta_i} l_{j_i}=\prod_{i=1}^{n-1}[l_i^{-1},g_{a_1}^{-\theta_i}]\,\,[{g_{a_1}^{-\theta_i}},l_{j_i}^{-1}];
\end{equation}
more generally, if $k \in \{2,\dots,n\}$:
\begin{equation}\label{partiallongformula}
 l_k=\prod_{i=1}^{k-1} g_{a_i}^{-\theta_i}g_{a_{j_i}}^{\theta_i}=\prod_{i=1}^{k-1} l_i^{-1}g_{a_1}^{-\theta_i}l_i l_{j_i}^{-1} g_{a_1}^{\theta_i} l_{j_i}=\prod_{i=1}^{k-1}[l_i^{-1},g_{a_1}^{-\theta_i}]\,\,[{g_{a_1}^{-\theta_i}},l_{j_i}^{-1}].
\end{equation}
{Thus both the longitude $l$ and any partial longitude $l_k$ belong to the commutator} subgroup of the fundamental group of the complement of $K$.
Also
$l_{i+1}= l_i [l_i^{-1},g_{a_1}^{-\theta_i}]\,\,[{g_{a_1}^{-\theta_i}},l_{j_i}^{-1}].$

{ In remark \ref{longformulanew} we will present another formula for a knot longitude.}

\subsubsection{The Eisermann invariant of knots} Let $K$ be a knot in $S^3$. 
 Consider the fundamental group of the complement  $C_K=S^3 \setminus n(K)$ of the knot $K$. Here $n(K)$ is an open regular neighbourhood of $K$. Choose a base point $p$ of $K$. Let the associated meridian and longitude of $K$ in $\pi_1(C_K)$ be denoted by $m_p$ and $l_p$, respectively. Note that $[m_p,l_p]=1$.

Let $f\colon \pi_1(C_K) \to G$ be a group morphism. {Therefore} $f(l_p)\in G'\doteq [G,G]$, the derived (commutator) group of $G$, {generated by the commutators $[g,h]\doteq ghg^{-1}h^{-1}$.} Moreover {$[f(l_p),f(m_p)]=1_G$.} Then \[f(l_p)\in \Lambda\doteq [G,G] \cap C(x),\]
where {$x=f(m_p)$} and $C(x)$ is the set of elements of $G$ commuting with $x$.

Let $G$ be a finite group. Let $x$ be an element of $G$. The Eisermann invariant  \cite{E2} (also called Eisermann polynomial) is:
\[{E(K)=\sum_{\left \{f\colon \pi_1(C_K) \to G\,|\, f(m_p)=x\right \}} f(l_p) \in \N(\Lambda). }\]
Note that if we choose a  different base point $p'$ of $K$ then $E(K)$ stays invariant since $m_{p'}=h\,m_p\,h^{-1}$ and $l_{p'}=h\,l_p\, h^{-1}$, for some common {$h \in \pi_1(C_K)$}. The Eisermann invariant can be used to detect chiral and non invertible knots  \cite{E2}.

Clearly $E(K)$ is given by a map $f_E^K\colon G' \to \N$, where 
\[E(K)=\sum_{g \in G'}  f_E^K(g) g. \]
Note that $f_E^K(g)=0$ if $g \not \in  \Lambda$.

Let us see that the Eisermann invariant can be addressed using Reidemeister pairs. This is  a consequence of 	the previous subsections and the discussion in \cite{E2,E1}, which we closely follow, having discussed and completed the most relevant issues in \ref{sklk}.

Let $G$ be a group. Choose $x\in G$ and consider from now on the pair $(G,x)$. We will use the notation $h^g=g^{-1} h g$, where $g,h \in G$.
Let \begin{align*}
Q&=\big\{x^g, g \in G'\big\} \subset G,
&\overline{Q}&=\big\{x^g, g \in G\big\} \subset G.
\end{align*}

\begin{lemma}
 Both sets $Q$ and $\overline{Q}$ are self conjugation invariant:
\[{a,b \in Q \implies a^{-1} b a \in Q \quad \quad \textrm{ and }\quad \quad  a,b \in \overline{Q} \implies a^{-1} b a \in \overline{Q}}\]
Therefore $Q$ and $\overline{Q}$ are both quandles, with quandle operation $h\tl g=h^g$. 
\end{lemma}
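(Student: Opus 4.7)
The plan rests on the identity $(x^h)^{x^g} = x^{h g^{-1} x g}$, which holds in any group under the convention $y^z = z^{-1} y z$. This reduces both closure claims to locating the exponent $h g^{-1} x g$ in the appropriate subset of $G$.

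For $\overline{Q}$ the argument is immediate: given $a = x^g$ and $b = x^h$ with $g, h \in G$, we have $b^a = x^{h g^{-1} x g}$ with exponent in $G$, hence $b^a \in \overline{Q}$.

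For $Q$ I would first observe that $Q = \{x^k : k \in G' \cdot C(x)\}$, because $x^k = x^{k'}$ whenever $k^{-1} k' \in C(x)$, so the element $x^{(-)}$ depends only on the coset of its exponent modulo $C(x)$. It therefore suffices to show that if $g, h \in G'$ then $h g^{-1} x g \in G' \cdot C(x)$. Rewriting $xg = [x,g]\, g x$ with $[x,g] = x g x^{-1} g^{-1} \in G'$, one gets
\[
h g^{-1} x g \;=\; \bigl(h g^{-1} [x,g]\, g\bigr) \cdot x,
\]
where the left factor lies in $G'$ (being a product of elements of $G'$) and $x \in C(x)$, completing the argument.

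Given these closure properties, the final claim is an instance of the standard fact that any conjugation-closed subset of a group is a quandle under $h \tl g = g^{-1} h g$: idempotency $a \tl a = a$ is trivial; the map $(-) \tl g$ has inverse $g \tr (-)$ with $g \tr h = g h g^{-1}$, and this inverse also preserves $Q$ and $\overline{Q}$ by the same identity applied with $g^{-1}$ in place of $g$; and self-distributivity is the standard computation $(g^{-1} h g)^{-1} = g^{-1} h^{-1} g$ combined with $(y^g)^h = y^{gh}$. The only genuine obstacle is the $Q$-case, where the conjugating element $x^g$ is not itself in $G'$; the trick is that the excess $x$ arising from the identity $xg = [x,g]\, g x$ is harmless because $x$ centralizes itself, so the coset class of the exponent modulo $C(x)$ still lies in $G' \cdot C(x)$.
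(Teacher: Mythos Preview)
Your argument is correct in outline and close in spirit to the paper's, but it takes a slightly longer detour and contains one small slip.

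The slip: you claim $x^k = x^{k'}$ whenever $k^{-1}k' \in C(x)$. In fact, writing $k' = kc$ with $c \in C(x)$ gives $x^{k'} = c^{-1}(x^k)c$, which need not equal $x^k$. The correct condition is $k'k^{-1} \in C(x)$ (so $k' = ck$), in which case $x^{k'} = k^{-1}c^{-1}xck = x^k$. Fortunately this does not damage your conclusion $Q = \{x^k : k \in G'\cdot C(x)\}$, because $G'$ is normal, hence $G'\cdot C(x) = C(x)\cdot G'$, and for $k = c\gamma$ with $c \in C(x)$, $\gamma \in G'$ one indeed gets $x^k = x^\gamma \in Q$.

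The detour: the paper makes the same opening move but chooses a different exponent. Both $h g^{-1} x g$ and $x^{-1} h g^{-1} x g$ satisfy $(x^h)^{x^g} = x^{(\cdot)}$ (they differ by a left factor $x^{-1} \in C(x)$), and the paper picks the latter, which factors directly as
\[
x^{-1} h g^{-1} x g \;=\; [x^{-1},\, h g^{-1}]\, h \;\in\; G',
\]
so no passage through $G'\cdot C(x)$ is needed. Your rewriting $x g = [x,g]\, g x$ achieves the same thing one step less efficiently: it lands you in $G' \cdot x \subset G'\cdot C(x)$, after which you must invoke normality of $G'$ to get back into $C(x)\cdot G'$ and hence into $Q$. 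Both routes are fine; the paper's is just shorter.

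One further remark: your claim that the inverse map $a \tr (-) = a(-)a^{-1}$ preserves $Q$ ``by the same identity applied with $g^{-1}$ in place of $g$'' is not quite literally justified, since $(x^g)^{-1} = (x^{-1})^g$ is not of the form $x^{g'}$. One clean way to see closure under $a(-)a^{-1}$ is to note that conjugation by $a = g^{-1}xg$ is the composite of conjugations by $g$, by $x^{-1}$, and by $g^{-1}$; conjugation by any $\gamma \in G'$ sends $x^h \mapsto x^{h\gamma} \in Q$, and conjugation by $x^{\pm 1}$ sends $x^h \mapsto x^{h x^{\mp 1}} = x^{x^{\mp 1} h x^{\pm 1}} \in Q$ (using normality of $G'$ and $x \in C(x)$). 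The paper is equally brief on this point.
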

\begin{proof}
 Given $g,h \in G'$ we have $$(g^{-1} xg )^{-1} (h^{-1} x h)  (g^{-1} xg )=x^{x^{-1}hg^{-1} x g},$$
and $$x^{-1}hg^{-1} x g=x^{-1}hg^{-1} x gh^{-1} h=[x^{-1},hg^{-1}] h \in G'.$$ 
The proof for $\overline{Q}$ is analogous.
\end{proof}

It is easy to see that:
\begin{lemma}[Eisermann] \label{eqd}
Let $G$ be a group. Given arbitrary $x \in G$, both $G'$ and $G$ are quandles, with quandle operation: \begin{align} h\tl g&=x^{-1}hg^{-1} x g\,\,, &g \tr h'&=xh'g^{-1}x^{-1} g. \end{align}
There are also  quandle maps $p\colon G' \to Q$ and $\overline{p}\colon \overline{Q} \to G$ with $p(g)=x^g$.
\end{lemma}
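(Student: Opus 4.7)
The strategy is to verify the three quandle axioms directly for the operation $h\tl g = x^{-1} h g^{-1} x g$ on $G$, then use a structural decomposition to show that $G'$ is closed under this operation, and finally check that the map $g\mapsto x^g$ intertwines the two quandle structures so as to yield the required quandle morphisms.

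Idempotency is immediate: $h \tl h = x^{-1} h h^{-1} x h = h$. For the bijectivity axiom, I will verify by direct substitution that the proposed $\tr$, namely $g \tr h' = x h' g^{-1} x^{-1} g$, is a two-sided inverse to $h \mapsto h \tl g$. Self-distributivity $(h \tl g) \tl k = (h \tl k) \tl (g \tl k)$ expands, after a handful of cancellations of the form $x x^{-1}$ and $k k^{-1}$, to the common word $x^{-2} h g^{-1} x g k^{-1} x k$ on both sides; this is the most tedious step, but it is entirely mechanical.

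The structural observation which makes the restriction to $G'$ transparent is the identity
\[
 h \tl g \;=\; (x^{-1} h x) \cdot [x^{-1}, g^{-1}],
\]
obtained by inserting $x x^{-1}$ after $h$. Since $G'$ is normal, $x^{-1} h x \in G'$ whenever $h \in G'$, and the commutator $[x^{-1}, g^{-1}]$ always lies in $G'$. Hence $h \tl g \in G'$ for any $h \in G'$ and any $g \in G$, and the axioms verified above descend to $G'$.

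Finally, for the quandle morphism $p \colon G' \to Q$ with $p(g)=x^g$ I will expand both sides of the required identity $p(h \tl g) = p(h) \tl p(g)$, where on the right $\tl$ is ordinary conjugation. One obtains
\[
 (h \tl g)^{-1} \, x \, (h \tl g) \;=\; g^{-1} x^{-1} g \; h^{-1} x h \; g^{-1} x g,
\]
and the same word arises from $(x^g)^{-1} (x^h) (x^g)$; so the two expressions agree on the nose. Since this calculation only uses the group law, the analogous statement for $\overline{p}$ follows by the same computation applied to $\overline{Q}$. The main obstacle is simply avoiding bookkeeping slips when telescoping these four-letter words in $G$; no conceptual subtlety arises.
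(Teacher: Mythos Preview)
Your proposal is correct. The paper gives no proof of this lemma beyond the phrase ``It is easy to see that'', so your direct verification of the quandle axioms, the closure of $G'$ via the decomposition $h\tl g=(x^{-1}hx)\,[x^{-1},g^{-1}]$, and the quandle-map identity $p(h\tl g)=p(h)^{p(g)}$ is exactly the expected approach; the only small addition worth making explicit is that the same decomposition $g\tr h'=(xh'x^{-1})\,[x,g^{-1}]$ shows $\tr$ also preserves $G'$, so the right-inverse axiom holds in $G'$ and not merely in $G$.
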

Recall the rack  invariant of tangles, defined just before Theorem \ref{tanglerack}.
\begin{Theorem}[Eisermann]
For any  knot $K$ and any $g$ in $G'$ it holds that:
\[\langle 1_G |I_{G'}({D_K})| g\rangle =f^K_E(g), \]
where $D_K$ is any string knot diagram associated to $K$. Of course we regard $D_K$ as {a $G$-enhanced} tangle diagram coloured with $1_G=1_{G'}$  at the top and with $g$ at the bottom.
\end{Theorem}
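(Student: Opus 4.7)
The plan is to exhibit an explicit bijection between the set of rack colourings of the string knot diagram $D_K$ by the Eisermann quandle $(G',\tl)$ of Lemma \ref{eqd} which assign $1_G$ to the top strand and $g$ to the bottom strand, and the set of group homomorphisms $f\colon \pi_1(C_K) \to G$ with $f(m_p)=x$ and $f(l_p)=g$. The former has cardinality $\langle 1_G | I_{G'}(D_K) | g \rangle$ by the definition of the rack tangle invariant just before Theorem \ref{tanglerack}, while the latter has cardinality $f_E^K(g)$ by definition of the Eisermann polynomial.

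Label the arcs $a_1,\ldots,a_n$ in order along the string knot, with $a_1$ the top arc containing the base point $p$ and $a_n$ the bottom arc. First I would associate to any homomorphism $f$ as above the colouring $c_i := f(l_i)$, where the $l_i$ are the partial longitudes from Subsection \ref{sklk}. Since each $l_i$ lies in the commutator subgroup of $\pi_1(C_K)$, each $c_i$ lies in $G'$; moreover $c_1 = f(1) = 1_G$ and $c_n = f(l_p) = g$. To see that $(c_i)$ is a rack colouring, I would apply $f$ to the update rule $l_{i+1} = l_i g_{a_i}^{-\theta_i} g_{a_{j_i}}^{\theta_i}$ from Figure \ref{partial} and use the identity $f(g_{a_i}) = c_i^{-1} x c_i$, which follows from $g_{a_i} = l_i^{-1} g_{a_1} l_i$. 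At a positive crossing this yields $c_{i+1} = x^{-1} c_i c_{j_i}^{-1} x c_{j_i} = c_i \tl c_{j_i}$, and at a negative crossing it yields $c_{i+1} = c_{j_i} \tr c_i$, matching the Eisermann quandle operations of Lemma \ref{eqd} exactly.

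Conversely, given a rack colouring $(c_1,\ldots,c_n)$ with $c_1 = 1_G$ and $c_n = g$, I would define $f$ on Wirtinger generators by $f(g_{a_i}) := c_i^{-1} x c_i$, and check that this extends to a homomorphism: the Wirtinger relation $g_{a_{i+1}} = g_{a_{j_i}}^{-\theta_i} g_{a_i} g_{a_{j_i}}^{\theta_i}$ reduces, after direct substitution, to the equality $c_{i+1} = c_i \tl c_{j_i}$ (or its negative-crossing analogue), which is precisely the rack colouring condition. An induction on $i$ using the partial longitude formula \eqref{partiallongformula} then yields $f(l_i) = c_i$ for every $i$, and in particular $f(l_p) = g$. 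The two constructions are manifestly mutually inverse, which establishes the claimed equality. The main technical obstacle is careful bookkeeping of the crossing signs $\theta_i$ and strand orientations, so that the operations $\tl$ and $\tr$ of the Eisermann quandle on $G'$ are correctly matched with the two forms of the partial longitude update rule displayed in Figure \ref{partial}; once this is nailed down, the verification reduces to the one-line commutator computation above.
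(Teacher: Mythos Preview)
Your proposal is correct and follows exactly the approach the paper intends: the paper's own proof is the single sentence ``Follows from the discussion in \ref{sklk} and especially figure \ref{partial}'', and what you have written is precisely the unpacking of that discussion --- namely, colouring the arc $a_i$ by the image $f(l_i)$ of the partial longitude and using the identity $g_{a_i}=l_i^{-1}g_{a_1}l_i$ together with the update rules of Figure \ref{partial} to match the Eisermann quandle operations of Lemma \ref{eqd}. Your bookkeeping of signs and the two mutually inverse constructions are the natural way to make that one-line reference rigorous.
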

\begin{proof}
 Follows from the discussion in \ref{sklk} and especially figure \ref{partial}.
\end{proof}

\begin{remark}
 The previous theorem is also valid for the quandle structure in  $G$, Lemma \ref{eqd}. Given the form of the quandle is easy to see that if $g,h \in G'$:
\[\langle g |I_{G'}({D_K})|h\rangle=\langle g |I_{G}({D_K})|h\rangle.\]
\end{remark}

By using subsection \ref{rackquandles} we will show that the Eisermann invariant can be addressed in our framework, by passing to string knots. 
 Suppose we are given  a finite group $G$ and $x \in G$ {(it may be that $x \in G \setminus G'$)}.
  We can choose any group operation structure in the underlying set of $G'$. We take the most obvious one, given by the inclusion $G' \subset G$.  The associated crossed module is $G'\stackrel{\rm id}{\rightarrow} G'$, with $G'$ acting on itself by conjugation.

 {Define, given $g,h \in G'$, the pair  $\Phi^x=(\psi^x,\phi^x)$,  as:}
\begin{equation}
\begin{split}
\phi^x(g,h)&= hg(x^g)^{-1} x^h h^{-1}g^{-1}=hx^{-1}gh^{-1}x g^{-1}=[hx^{-1},gx^{-1} ]\\
\psi^x(g,h)&=[g,h][hg^{-1},x]=[xhg^{-1}x^{-1}gx^{-1},gx^{-1}]^{-1}
\end{split}
\label{eisphipsi}
\end{equation}
\begin{Theorem}
 The pair $\Phi^x=(\psi^x,\phi^x)$ is an unframed Reidemeister pair for the crossed module $G'\stackrel{\rm id}{\rightarrow} G'$, with $G'$ acting on itself by conjugation. Let $K$ be an oriented knot and $L_K$ be the associated string knot.  Given $g \in G'$ then
\begin{equation}\label{eqinv}\langle 1_{G'} | I_{\Phi^x}(L_K)| g\rangle=f^K_E(g).\end{equation}
\label{eisermann}
\end{Theorem}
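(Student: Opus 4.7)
My plan is to derive both assertions as applications of the rack-based machinery of subsection \ref{rpdrq} to the Eisermann quandle structure on $G'$ supplied by Lemma \ref{eqd}. The crossed module $G' \xrightarrow{\id} G'$ under the adjoint action is exactly of the form treated in Theorem \ref{racktophi}, with the chosen group structure on $G'$ being that induced by the inclusion $G' \subset G$. Substituting the Eisermann operations $h \tl g = x^{-1}hg^{-1}xg$ and $g \tr h = xhg^{-1}x^{-1}g$ into the prescriptions $\psi(b,a) = bab^{-1}(b \tr a)^{-1}$ and $\phi(b,a) = ab(a \tl b)^{-1}b^{-1}$, and simplifying, should reproduce the formulas \eqref{eisphipsi} for $\psi^x$ and $\phi^x$. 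Since the Eisermann structure on $G'$ is a quandle (not merely a rack), Theorem \ref{racktophi} then immediately implies that $\Phi^x = (\psi^x, \phi^x)$ is an unframed Reidemeister pair.

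For the identification of the state sum with $f^K_E(g)$, I would use the bijection between $\Gc$-colourings and rack colourings established in \ref{rpdrq} and Theorem \ref{tanglerack}. In the present context, the $\Phi^x$-colourings of the string knot diagram $L_K$ that extend the boundary colouring $(1_{G'}, g)$ are in bijection with Eisermann-quandle colourings of $L_K$ whose top arc is coloured $1_{G'}$ and whose bottom arc is coloured $g$. Next I would invoke the analysis of partial longitudes in \ref{sklk}: comparing the crossing rules of figure \ref{partial} with the Eisermann operation shows that setting the top arc colour to $1 = l_1$ forces each subsequent arc $a_i$ to carry precisely the partial longitude $l_i \in G'$, so the bottom colour $g$ corresponds exactly to $l_n = l = g$.

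The remaining step is to identify such an Eisermann-quandle colouring of $L_K$ with a representation $f\colon \pi_1(C_K) \to G$ satisfying $f(m_p) = x$. The assignment $g_{a_i} \mapsto x^{l_i}$ defines such a representation, the Wirtinger relation at each crossing being equivalent to the quandle axiom via the identity $g_{a_{i+1}} = l_{i+1}^{-1} g_{a_1} l_{i+1}$ with $g_{a_1} = x$; conversely, any representation with $f(m_p) = x$ recovers the partial longitudes from its values on the Wirtinger generators. The boundary condition that the bottom arc be coloured $g$ then translates to $f(l_p) = g$, so the number of admissible $\Phi^x$-colourings of $L_K$ equals $f^K_E(g)$.

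To conclude, I would note that since $\partial = \id$ in the present crossed module, the set $\mathrm{Hom}_{\C(\Gc)}(1_{G'}, g)$ contains the unique morphism $(1_{G'}, g)$, so every Reidemeister colouring contributes the same evaluation, and the state sum $\langle 1_{G'} | I_{\Phi^x}(L_K) | g \rangle$ reduces to the count of colourings, yielding \eqref{eqinv}. The main technical obstacle is step two: carefully matching the sign and orientation conventions between the crossing equations \eqref{fundX+}--\eqref{fundX-}, the Eisermann operations of Lemma \ref{eqd}, and the partial-longitude formulas of figure \ref{partial}, so that the identifications agree uniformly at both positive and negative crossings. This is essentially bookkeeping, but it is where subtle sign errors are most likely to arise.
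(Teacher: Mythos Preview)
Your approach is correct and takes a genuinely different route from the paper. The paper establishes that $\Phi^x$ is an unframed Reidemeister pair by direct verification: it checks the $R1$ condition trivially, then expands the $R2$ identity $\phi^x(l,m)\,\psi^x(l,x^{-1}ml^{-1}xl)=1$ and the $R3$ identity in full, cancelling terms by hand in each case. You instead observe that the formulas \eqref{eisphipsi} are precisely what the general rack construction of subsection \ref{rpdrq} produces when applied to the Eisermann quandle structure on $G'$ (Lemma \ref{eqd}), with the group structure inherited from $G' \subset G$; since that structure is a quandle, Theorem \ref{racktophi} immediately yields the unframed Reidemeister pair property, and Theorem \ref{tanglerack} then identifies the state sum with the rack count, which the preceding ``Eisermann Theorem'' already equates with $f^K_E(g)$. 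This is conceptually cleaner and avoids a page of commutator arithmetic. The trade-off is that the paper's explicit $R2$ and $R3$ computations are not wasted: they are reused verbatim in the proof of Theorem \ref{liftextension2}, where the lifted identities in $E$ are reduced (via Lemma \ref{bracketlift}) to exactly those group-theoretic identities in $G$. If you take your shortcut here, you would need to supply those identities separately when proving the lifting theorem --- which is easy, since they are instances of the rack axioms, but worth flagging.
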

\begin{proof}
The expressions for {$\Phi^x=(\psi^x,\, \phi^x)$} guarantee that the colourings of the arcs at a crossing are those given by the Eisermann quandle operation and its inverse (Lemma \ref{eqd}). Thus  equation \eqref{eqinv} holds, and it is enough to check that $\Phi$ does indeed satisfy the conditions to be a unframed Reidemeister pair. Clearly the Reidemeister 1 condition \rref{R1} holds: {$\psi^x(l,l)=1$}.  The Reidemeister 2 equation \rref{R2} is:
{$
\phi^x({l},{m})\, \psi^x({l}, x^{-1}{m}{l}^{-1}x{l}) =1
$,}
i.e.
\[
[{m}x^{-1},{l}x^{-1}]\, [{l},x^{-1}{m}{l}^{-1}x{l} ]\, [x^{-1}{m}{l}^{-1}x,x]=1.
\]
Writing this out in full, one obtains:
\begin{multline*}
{m}x^{-1}{l}x^{-1}x{m}^{-1}x{l}^{-1} \, . \, {l}(x^{-1}{m}{l}^{-1}x{l}){l}^{-1}({l}^{-1}x^{-1}{l}{m}^{-1}x) \, . \,   (x^{-1}{m}{l}^{-1}x)x(x^{-1}{l}{m}^{-1}x)x^{-1} \\
  = {m}\, \underline{x^{-1}{l}{m}^{-1}x{l}^{-1} \, . \, {l} x^{-1} {m}{l}^{-1}x} \, \,\,
\underline{{l}^{-1}x^{-1} {l} {m}^{-1} x\, . \, x^{-1} {m}{l}^{-1} x {l} }\, {m}^{-1} = 1.
\end{multline*}
In the above computation, {the underlined factors are} equal to 1.

To avoid confusion with the quandle operation $\tr$, the left action of $G'$ on $G'$ by conjugation will be denoted by $g \bullet h$, thus $g \bullet h=ghg^{-1}$ for each $g,h \in G'$. 
The Reidemeister 3 equation \rref{R3}
for this case reads:
\[
\phi^x({l},{m})\, . \, {l}\bullet \phi^x({n},{p}) \, . \, \phi^x({n},{l}) = {m}\bullet \phi^x({n},{l}) \, . \,\phi^x({n},{m}) \, . \, {n}\bullet \phi^x({r},{q}),
\] 
{where
$
{p}=x^{-1}{m}{l}^{-1}x{l}, \quad {r}= x^{-1}{l}{n}^{-1}x{n}, \quad {q}=x^{-1}{m}{n}^{-1}x{n}.
$}
The left-hand side of the above equation, written out in full, is:
\begin{eqnarray*}
\lefteqn{{m}x^{-1}{l}{m}^{-1}x{l}^{-1} \, . \, {l}({p}x^{-1}{n}{p}^{-1}x{n}^{-1}){l}^{-1}  \, . \, ({l}x^{-1}{n}{l}^{-1} x{n}^{-1} )} \\
& & \quad \quad \quad= {m}\, \underline{x^{-1}{l}{m}^{-1}x \, . \, x^{-1} {m}{l}^{-1}x } \, {l} x^{-1} {n} ({l}^{-1}x^{-1}{l}{m}^{-1}x)x  
{n}^{-1} \, . \, x^{-1} {n} {l}^{-1} x {n}^{-1} \\
& &  \quad \quad \quad   = {m}{l}{n} ({n}^{-1}x^{-1}{n}{l}^{-1}x^{-1}{l}{m}^{-1}x^2) ({n}^{-1}x^{-1}{n}{l}^{-1}x){n}^{-1},
\end{eqnarray*}
and the right-hand side leads to the same expression:
\begin{eqnarray*}
\lefteqn{{m}({l}x^{-1}{n}{l}^{-1}x{n}^{-1}){m}^{-1} \, .  \,  {m}x^{-1}{n}{m}^{-1}x{n}^{-1}  \, . \,   {n}({q}x^{-1}{r}{q}^{-1}x{r}^{-1}){n}^{-1} }\\
& & = {m} {l}x^{-1}{n}{l}^{-1}  \,  \underline{x{n}^{-1}   x^{-1}{n}{m}^{-1}x  (x^{-1} {m}{n}^{-1}x {n}) \, . \, x^{-1} } \,\,\, (x^{-1}{l} \, \underline{{n}^{-1} x {n})( {n}^{-1} x^{-1} {n} }\, {m}^{-1}x)   x({n}^{-1}x^{-1}{n}{l}^{-1}x){n}^{-1} \\
& & =  {m}{l}{n} ({n}^{-1}x^{-1}{n}{l}^{-1}x^{-1}{l}{m}^{-1}x^2) ({n}^{-1}x^{-1}{n}{l}^{-1}x){n}^{-1}.
\end{eqnarray*}
In both derivations we insert the definitions of ${p},\, {r}$ and ${q}$ in the {1st} equality, and regroup the factors after eliminating the underlined expressions in the {2nd} equality. Note that both sides equal ${m}{l}{n}{u}^{-1}{r}^{-1}{n}^{-1}$, which is the product of the colourings of the 6 external arcs in Figure \ref{R3Eisermann} for Reidemeister 3, taken in anticlockwise order starting with ${m}$, where ${u}$ is the colouring assigned to the rightmost upper arc, i.e. 
${u}= x^{-1}{p}{n}^{-1}x{n}=x^{-1} {q}{r}^{-1}x{r}$.
\begin{figure}
\centerline{\relabelbox 
\epsfysize 3cm
\epsfbox{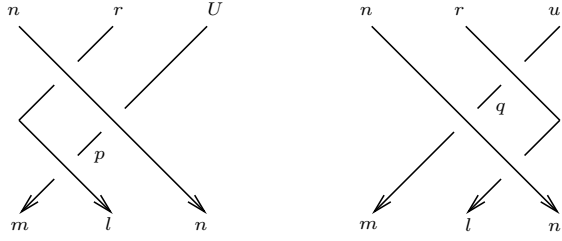}
\relabel{M}{$\scriptstyle{{m}}$}
\relabel{L}{$\scriptstyle{{l}}$}
\relabel{N}{$\scriptstyle{{n}}$}
\relabel{NN}{$\scriptstyle{{n}}$}
\relabel{R}{$\scriptstyle{{r}}$}
\relabel{U}{$\scriptstyle{U}$}
\relabel{P}{$\scriptstyle{{p}}$}
\relabel{MM}{$\scriptstyle{{m}}$}
\relabel{LL}{$\scriptstyle{{l}}$}
\relabel{NNN}{$\scriptstyle{{n}}$}
\relabel{NNNN}{$\scriptstyle{{n}}$}
\relabel{RR}{$\scriptstyle{{r}}$}
\relabel{UU}{$\scriptstyle{{u}}$}
\relabel{Q}{$\scriptstyle{{q}}$}
\endrelabelbox}
\caption{Two sides of the Reidemeister-III move in the proof of Theorem \ref{eisermann}.}
\label{R3Eisermann}
\end{figure}
\end{proof}

Consider the quandle structure in $G$, Lemma \ref{eqd}, given by the same formulae as the one of $G'$. Consider the crossed module given by the identity map $G \to G$ and the adjoint action of $G$ on $G$.  Given $x \in G$ we have an unframed Reidemeister pair  $\bar{\Phi}^x=(\bar\psi^x,\bar \phi^x)$, with the same formulae as \eqref{eisphipsi}, namely: 
 \begin{equation}\label{epb}
\begin{split}
\bar{\phi}^x(g,h) &= hg(x^g)^{-1} x^h h^{-1}g^{-1}=hx^{-1}gh^{-1}x g^{-1}=[hx^{-1},gx^{-1}]\,\,,\\
\bar{\psi}^x(g,h)&=[g,h][hg^{-1},x]=[xhg^{-1}x^{-1}gx^{-1},gx^{-1}]^{-1}
\end{split}
\end{equation}
{for  $g,h \in G$.} {It is easy to see that:}
\begin{proposition}
 Let $a,b \in G'$. {For each $x \in G$, and  each string knot $L_K$:}
\[\langle a | I_{\Phi^x}(L_K)| b\rangle =\langle a  | I_{\bar{\Phi}^x} (L_K)| b\rangle\]
\end{proposition}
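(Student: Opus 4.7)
The plan is to prove the proposition by establishing two observations: (i) the set of $\Phi^x$-colourings of $L_K$ with endpoints $a,b\in G'$ coincides with the set of $\bar\Phi^x$-colourings with the same endpoints; and (ii) each such colouring contributes the same morphism $e(F)$ to both state sums.

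For (i), the crucial computation is the identity
\[
h \tl g \;=\; x^{-1}h g^{-1}xg \;=\; (x^{-1}hx)\,[x^{-1}, g^{-1}],
\]
valid for every $h,g\in G$. Since $G'$ is a normal subgroup of $G$, the first factor lies in $G'$ whenever $h$ does, and the second factor is a commutator so it always lies in $G'$. Therefore $h\tl g\in G'$ whenever $h\in G'$, regardless of $g\in G$. By \eqref{Zforpsi}--\eqref{Zforphi} applied to $\bar\Phi^x$, the arc colours in a $\bar\Phi^x$-colouring propagate through crossings via the Eisermann quandle operation of Lemma \ref{eqd} (as already observed in the opening of the proof of Theorem \ref{eisermann}). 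Traversing the single strand of the string knot from the top, an induction shows that in any $\bar\Phi^x$-colouring with top colour $a\in G'$, every arc must carry a colour in $G'$ --- crucially, this conclusion does not require the over-arc colours encountered along the way to be known in advance, because the identity above forces each propagated under-arc colour into $G'$ from the $G'$-membership of the preceding under-arc colour alone. Hence such $\bar\Phi^x$-colourings are precisely the $\Phi^x$-colourings of $L_K$ with the same endpoints, and the reverse inclusion is trivial.

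For (ii), the formulas \eqref{eisphipsi} and \eqref{epb} defining $\Phi^x$ and $\bar\Phi^x$ are word-for-word identical; each $E$-value they output is a product of commutators in $G$, hence lies in $G'$. In both crossed modules $G'\xrightarrow{\id}G'$ and $G\xrightarrow{\id}G$ the action is by conjugation, and the categorical-group operations --- vertical composition via multiplication of $E$-elements, horizontal tensor via the action --- agree on $G'$-valued data. Consequently, the evaluation $e(F)$ is assembled in both cases from the same crossing data using the same operations, and therefore yields the same morphism $a\to b$, viewed inside $\C(G\xrightarrow{\id}G)$ via the inclusion of categorical groups induced by $G'\hookrightarrow G$. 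Summing over the common set of colourings now gives the equality of state sums.

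The only non-trivial step is the propagation argument in (i), which rests essentially on the normality of $G'$ in $G$ together with the specific form of the Eisermann quandle; everything else reduces to a routine verification that identical formulas yield identical outputs.
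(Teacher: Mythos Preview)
Your argument is correct and supplies exactly what the paper intends by ``it is easy to see'' (the paper gives no proof). One small addition: your displayed identity handles only $\tl$, but traversal through a negative crossing uses the inverse operation $g\tr h=xhg^{-1}x^{-1}g=(xhx^{-1})[x,g^{-1}]$, which preserves $G'$ in the variable $h$ for the same reason; this second identity should be stated alongside the first to complete the induction. Your part (ii) is valid but heavier than needed: since $\partial=\id$ in both crossed modules, there is a \emph{unique} morphism $a\to b$ in each categorical group (namely $ba^{-1}\in G'\subset G$), so once (i) establishes that the two sets of colourings coincide, equality of the state sums is immediate, as in Theorem~\ref{tanglerack}.
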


\begin{remark}[Formula for a knot longitude]\label{longformulanew}
Our approach for defining Eisermann invariants, and the proof of Theorem \ref{eisermann}, provides a different formula to \eqref{longformula} for the longitude {$l_p=l$} of a knot $K$, if $K$ is presented as the closure of a string knot $L$. It is assumed that the base point ${p \in K}$ of the closed knot $K$ lives in the top end of $L$.  Let $G=\pi_1(C_K)$. Consider the crossed module $(\id\colon G \to G,\ad)$. Let $x$ be the element of $G$ given by the top strand  $a$ of $L$. Let $b$ be the bottom strand {of $L$}.  Consider the Reidemeister pair $\bar{\Phi}^x$ in \eqref{epb}, thus
\begin{equation}
{\bar{\phi}^x(g,h)=[hx^{-1},gx^{-1}] \quad \quad \quad  \textrm{ and }\quad\quad  \quad  \bar{\psi}^x(g,h)={[g,h][hg^{-1},x]}}={[xhg^{-1}x^{-1}gx^{-1},gx^{-1}]^{-1} .}
\end{equation}
 Consider a diagram $D$ of $L$.
 Colour each arc $c$ of the diagram $D$  with the corresponding partial longitude $l_c$, as defined in \ref{sklk}. Therefore $l_a=1$ and $l_b=l$. Then by the proof of Theorem \ref{eisermann} one has a Reidemeister colouring $F$. If we evaluate $F$ {(definitions \ref{ev1} and  \ref{ev2})}, we have a morphism $l_a \ra{e(F)} l_b$,  hence {$e(F)=l=l_p$.} The form of $e(F)$ thus yields an alternative formula for the knot longitude, which will be crucial {for giving a homotopy interpretation of the lifting (Theorem \ref{liftextension2}) of the Eisermann invariant.}
\end{remark}

\subsubsection{One example of Eisermann invariants}
Let $G$ be a group with a base point $x$.
The explicit calculation of the invariant $\langle a |I_{\bar{\Phi}^x}(K_+)| b \rangle $ and $\langle a |I_{\bar{\Phi}^x}(K_-)| b \rangle$
for the trefoil knot $K_+$ and its mirror image $K_-$ (the positive and negative trefoils), converted to string knots,  appears in {figure \ref{TM}.} In particular, given $a,g,h \in G$, we have:
\begin{align*}
\langle a |I_{\bar{\Phi}^x}(K_+)| g \rangle&= \# \big\{h \in G\colon   x^{3}hg^{-1}
x^{-1}gh^{-1}x^{-1}hg^{-1} x^{-1} g=a \, ; \,   x^{2}gh^{-1}x^{-1}hg^{-1}x^{-1}g=h \big\}\,\,,\\ \langle a |I_{\bar{\Phi}^x}(K_-)| h \rangle&=\# \big\{g \in G\colon   x^{-3}hg^{-1}xgh^{-1}xhg^{-1} x g=g \, ; \,   x^{-2}gh^{-1}xhg^{-1}xg=a \big\} \, . 
\end{align*}
\begin{figure}
\begin{minipage}{0.5\textwidth}
\centerline{\relabelbox 
\epsfysize 4.3cm 
\epsfbox{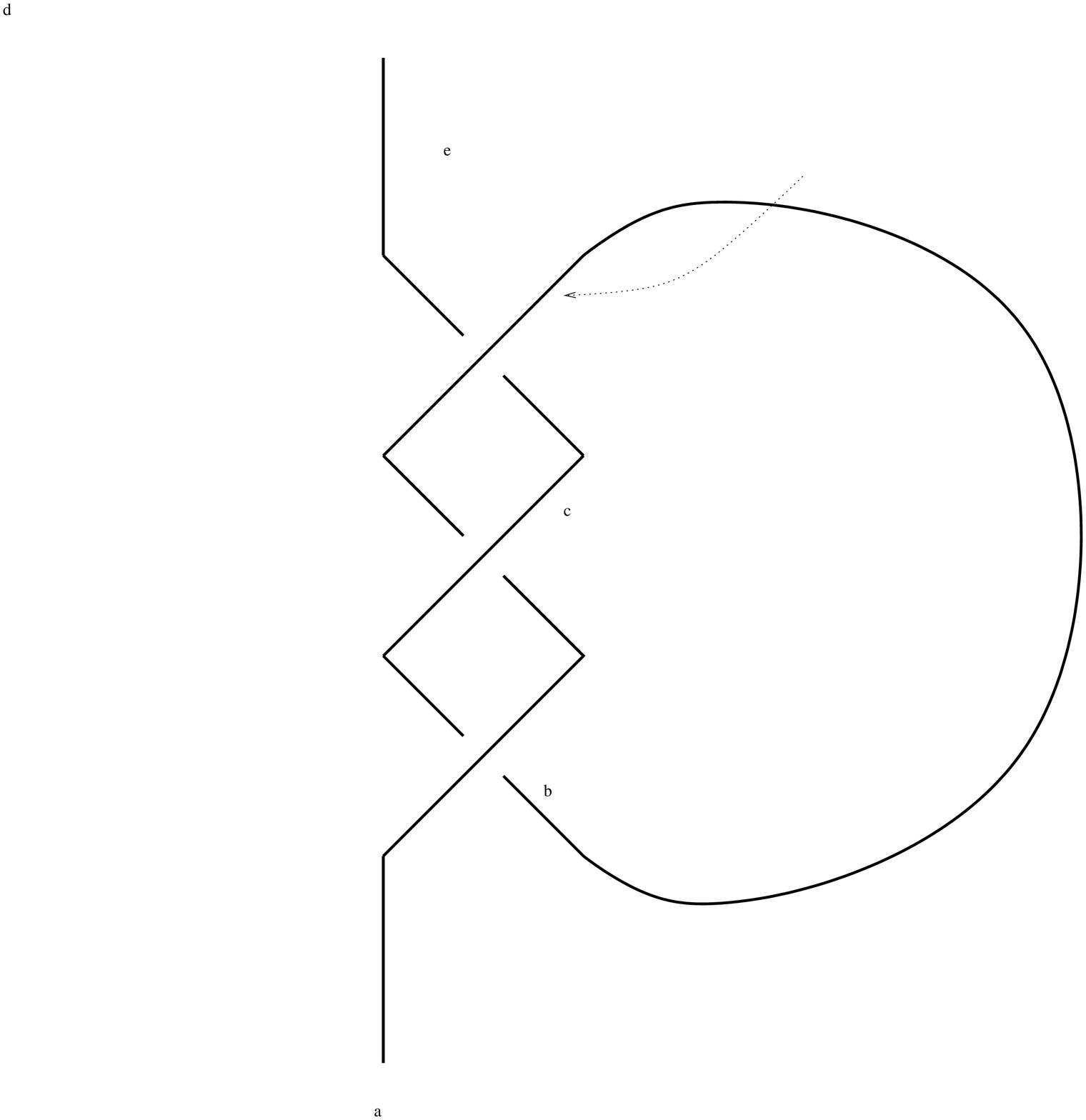}
\relabel{b}{${h}$}
\relabel{a}{${g}$}
\relabel{c}{${xhg^{-1}x^{-1}g}$}
\relabel{e}{${x^{2}gh^{-1}x^{-1}hg^{-1}x^{-1}g}$}
\relabel{d}{$x^{3}hg^{-1}x^{-1}gh^{-1}x^{-1}hg^{-1} x^{-1} g $}
\endrelabelbox}
\end{minipage}
\begin{minipage}{0.5\textwidth}
\centerline{\relabelbox 
\epsfysize 4.3cm 
\epsfbox{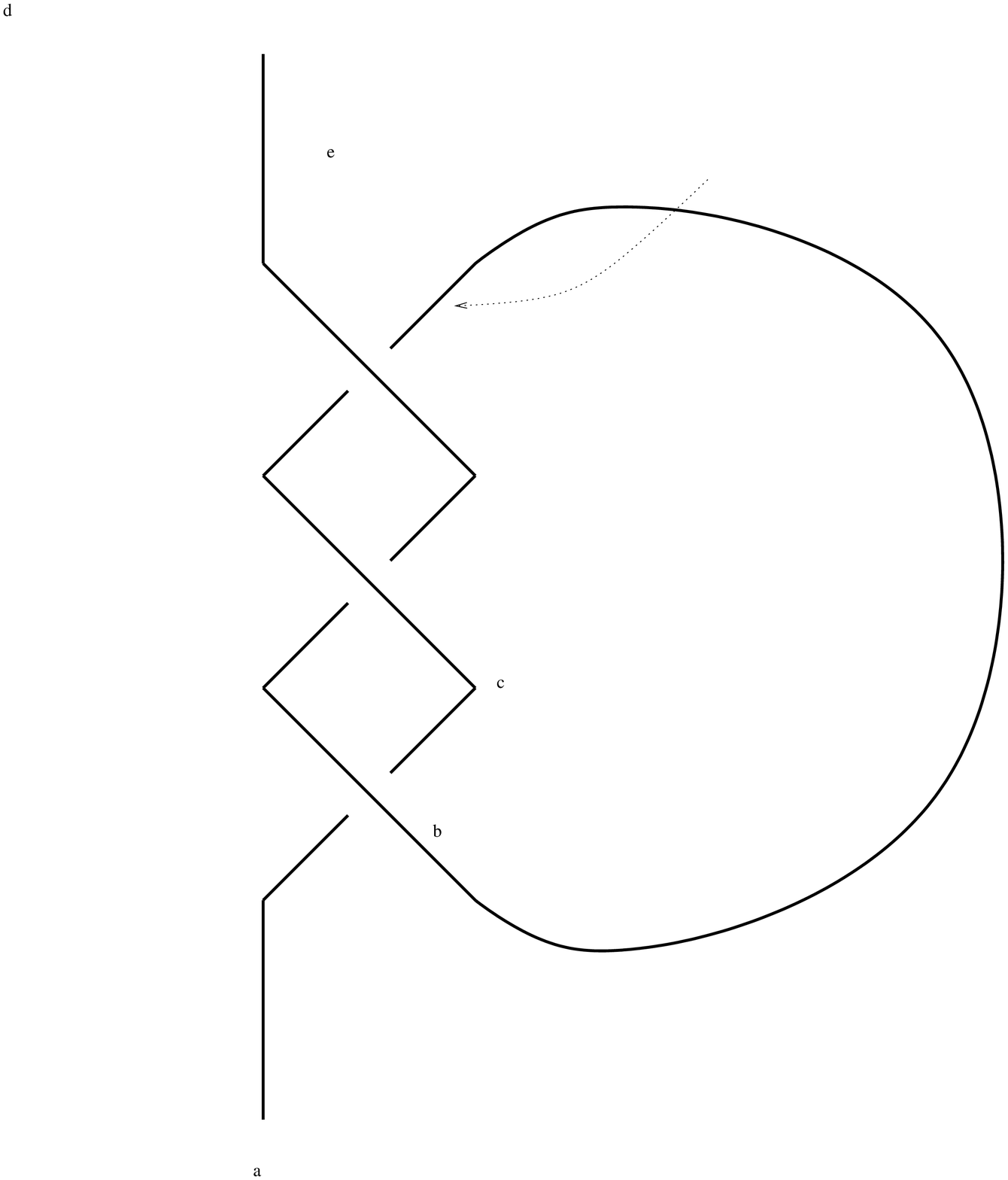}
\relabel{a}{${h}$}
\relabel{b}{${g}$}
\relabel{c}{${x^{-1}hg^{-1}xg}$}
\relabel{d}{${x^{-2}gh^{-1}xhg^{-1}xg}$}
\relabel{e}{$x^{-3}hg^{-1}xgh^{-1}xhg^{-1} x g $}
\endrelabelbox}
\end{minipage}
\caption{{The Eisermann polynomial for the positive and negative trefoil knots $K_+$ and $K_-$}\label{TM}}
\end{figure}

Consider from now on $G=S_5$. We refer to   table \ref{T}, displaying the values of $\langle 1 |I_{\bar{\Phi}^x}(K_-)| a \rangle$ and  $\langle 1 |I_{\bar{\Phi}^x}(K_+)| a \rangle$ for some choices of $x \in S_5$, representing all possible conjugacy classes in $S_5$. In each {2nd} and {3rd} {row entry} of table \ref{T}, we {put:} \[{\sum_{h \in S_5} \langle 1 |I_{\bar{\Phi}^x}(K_-)| h\rangle \quad \quad \textrm{ and } \quad \quad  \sum_{g \in S_5} \langle 1 |I_{\bar{\Phi}^x}(K_+)| g\rangle,}\] both elements of the group algebra of $S_5$.
\begin{table}
\begin{tabular}{|l|l|l|l|l|l|l|l|}
 $x$  & $\id_{}$ &$(12)$         & $(12)(34)$       & ($123)$         & $(123)(45)$ & $(1234)$  & $(12345)$ \\ \hline
 $K_-$ & $\id_{}$& $7\id_{}$& $5\id_{}$   & $7\id_{}$  &$\id_{}$& $\id_{}+4(13)(24)$   &$\id_{}+5(12345)$  \\                      \hline 
 $K_+$ & $\id_{}$& $7\id_{}$& $5\id_{}$   & $7\id_{}$  &$\id_{}$ & $\id_{}+4(13)(24) $ & 
$\id_{}+5(15432)$ 
\end{tabular}
\vskip 0.2 cm
\caption{\label{T}{The Eisermann invariant for the negative and positive trefoils for $G=S_5$.}}
\end{table}
{Therefore} the invariant  $L \mapsto \langle 1 |I_{\bar{\Phi}^x}(L)| b \rangle$, where we identify a knot with its associated string knot, {separates} the trefoils, for $x=(12345)$. {This  is due to Eisermann \cite{E2}. }

\subsection{{Reidemeister pairings derived from central extensions of {groups} - lifting the Eisermann invariant}}\label{LEI}

{Recall the construction of the Eisermann invariant  in our framework (subsection \ref{ep}). The quandle underlying the Eisermann invariant corresponds to the Reidemeister pair given in equation 
\eqref{eisphipsi}.}
\begin{definition}[{Unframed Eisermann lifting} ] Let $G$ be a {finite} group and $x \in G$.
An unframed Eisermann lifting is given by a crossed module $(\d: E\rightarrow G, \tr)$, and an unframed Reidemeister pair $\Phi^x=(\phi^x, \psi^x)$,  where $\phi^x, \, \psi^x: G \times G \rightarrow E$, such that, {given $L,M\in G$}: 
\begin{align*}
\d(\phi^x(L,M)) &=  [Mx^{-1},Lx^{-1} ]\,\,, & \d(\psi^x(L,M)) = & [L,M][ML^{-1},x].
\end{align*}
\end{definition}
The colourings of the arcs of any tangle diagram will correspond to those given by the Eisermann quandle, but there may be additional information contained in the assignments of elements of $E$ to the crossings of the diagram, i.e. an Eisermann lifting is a refinement of the Eisermann invariant. 

In this subsection we will construct an unframed Eisermann {lifting} from each central extension of groups:  \[\{0\}\to A \to E \ra{\d} G \to \{1\}.\] Here $G$ is a finite group and { $\d \colon E \to G$ is} a surjective group map, such that the kernel $A$ of {$\d$ is central in $E$}.

Choose an arbitrary section $s \colon G \to E$ of $\d$, meaning $\d(s(g))=g$, for each $g \in G$. Therefore $s(gh)=s(g)\,s(h)\,\lambda(g,h)$, where $\lambda(g,h)$ is in the centre of $E$, for each $g,h \in G$. Moreover given $e \in E$ then $s(\d(e))=e\, c(e)$, where $c(e)$ is in the centre of $E$. This is because $\d(s(\d(e))=\d(e)$. Clearly:
\begin{lemma}\label{liftextension}
 {The map $(g,e) \in G \times E \mapsto g \tr e=s(g) \,\, e\,\, s(g)^{-1}\in E$ is a left action of $G$ on $E$ by automorphisms, and with this action $\d\colon E \to G$ is a crossed module. Moreover, the action $\tr$ does not depend on the section $s$.}
\end{lemma}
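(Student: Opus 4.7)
The plan is to verify in sequence: (i) $\triangleright$ is a left action by automorphisms, (ii) the Peiffer equations of Definition \ref{LCM} hold for $\partial$ together with $\triangleright$, and (iii) the action is independent of the chosen section. The single technical ingredient driving everything is that $A = \ker \partial$ is central in $E$, so every element whose image under $\partial$ is trivial commutes with all of $E$.

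First I would check that $\triangleright$ is a left action. Each map $e \mapsto s(g)\,e\,s(g)^{-1}$ is a group automorphism of $E$ since it is conjugation. For the multiplicativity in $g$, write $s(gh) = s(g)\,s(h)\,\lambda(g,h)$ with $\lambda(g,h) \in A$ as in the discussion preceding the lemma. Then
\[
(gh) \triangleright e = s(g)\,s(h)\,\lambda(g,h)\,e\,\lambda(g,h)^{-1}\,s(h)^{-1}\,s(g)^{-1} = s(g)\,s(h)\,e\,s(h)^{-1}\,s(g)^{-1} = g \triangleright (h \triangleright e),
\]
using centrality of $\lambda(g,h)$ to cancel it past $e$. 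Similarly $s(1_G) \in A$ is central, so $1_G \triangleright e = e$.

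Next I would verify the two Peiffer equations. The first is immediate from $\partial \circ s = \id_G$:
\[
\partial(g \triangleright e) = \partial(s(g))\,\partial(e)\,\partial(s(g))^{-1} = g\,\partial(e)\,g^{-1}.
\]
For the second, I use the element $c(e) \in E$ defined by $s(\partial(e)) = e\,c(e)$. Applying $\partial$ to this equation gives $\partial(c(e)) = 1_G$, so $c(e) \in A$ and is therefore central in $E$. Consequently
\[
\partial(e) \triangleright f = s(\partial(e))\,f\,s(\partial(e))^{-1} = e\,c(e)\,f\,c(e)^{-1}\,e^{-1} = e\,f\,e^{-1},
\]
which is the second Peiffer identity.

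Finally, to see section-independence, let $s'$ be another section. Since $\partial(s'(g)) = g = \partial(s(g))$, the element $a(g) := s(g)^{-1}\,s'(g)$ lies in $A$, and so is central. Thus $s'(g)\,e\,s'(g)^{-1} = s(g)\,a(g)\,e\,a(g)^{-1}\,s(g)^{-1} = s(g)\,e\,s(g)^{-1}$, showing $\triangleright$ depends only on the extension, not on $s$. There is no real obstacle here; the only subtlety to keep straight is that while $s$ need not be a group homomorphism and $c(e)$ is not in general trivial, the centrality of $A$ ensures that all the nonhomomorphism defects lie in the centre and drop out of every relevant computation.
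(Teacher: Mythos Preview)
Your proof is correct and is exactly the verification the paper has in mind: the paper introduces $\lambda(g,h)$ and $c(e)$ precisely for this purpose and then states the lemma with the single word ``Clearly'', omitting the details you have written out. There is nothing to add.
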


{Given a section $s\colon G \to E$ of $\partial\colon E \to G$,} define, for each $g,h \in G$:
\begin{equation}\label{dpl}
 \{g,h\}=[s(g),s(h)]. 
\end{equation}
{(This does not depend on the chosen section $s$ of $\d$ since $\ker(\partial)$ is central in $E$.)}
\begin{lemma}
{For each $a,b \in E$ we have $[a,b]=\{\d(a),\d(b)\} .$}
\label{bracketlift}
\end{lemma}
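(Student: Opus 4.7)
The plan is to reduce the right-hand side to the left-hand side using only the fact that $s(\partial(e)) = e\,c(e)$ with $c(e)\in Z(E)$, together with the elementary commutator identity that central factors drop out of commutators.

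First I would unwind the definition: by \eqref{dpl},
\[
\{\partial(a),\partial(b)\} = [s(\partial(a)),\, s(\partial(b))].
\]
Next, recalling from the discussion preceding Lemma \ref{liftextension} that $s(\partial(e)) = e\, c(e)$ with $c(e)$ central in $E$ for every $e\in E$, I would substitute to obtain
\[
\{\partial(a),\partial(b)\} = [a\, c(a),\, b\, c(b)].
\]
The final step is the routine observation that in any group, if $z_1,z_2$ lie in the centre then $[xz_1,yz_2]=[x,y]$: indeed,
\[
(xz_1)(yz_2)(xz_1)^{-1}(yz_2)^{-1} = xy\,z_1 z_2\, x^{-1} z_1^{-1}\, y^{-1}z_2^{-1} = xyx^{-1}y^{-1},
\]
the central elements commuting through and cancelling in pairs. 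Applying this with $x=a$, $y=b$, $z_1=c(a)$, $z_2=c(b)$ yields $[a\,c(a),\,b\,c(b)] = [a,b]$, which completes the proof.

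The argument is essentially a one-liner, so there is no real obstacle; the only thing to be careful about is the parenthetical claim, already noted in the statement following \eqref{dpl}, that $\{g,h\}$ is independent of the chosen section $s$ — but this is an immediate consequence of the same centrality argument, so no separate verification is needed here.
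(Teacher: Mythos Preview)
Your proof is correct and follows exactly the same approach as the paper's proof, which is the one-line computation $\{\d(a),\d(b)\}=[s(\d(a)),s(\d(b))]=[a\,c(a),b\,c(b)]=[a,b]$. You have simply spelled out in more detail the final step, that central factors cancel in commutators.
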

\begin{proof}
{Given $a,b \in E$  we have $\{\d(a),\d(b)\}=[s(\d(a)),s(\d(b)]=[a\,c(a),b\,c(b)]=[a,b]. $}
\end{proof}

\begin{Theorem}\label{liftextension2} Let $G$ be a {finite}  group and $x \in G$. Let $\d\colon E \to G$ be a surjective group morphism such that the kernel $A$ of $\partial$ is central in $E$. The pair $\Phi^x =(\phi^x, \psi^x)$, given by:
\begin{align*}
\phi^x(g,h)&=\{hx^{-1}, gx^{-1}\} &\psi^x(g,h) &= \{g,h\} \{hg^{-1},x\};
\end{align*}
is an unframed Eisermann lifting for the crossed module $(\d \colon E \to G, \tr)$, of Lemma \ref{liftextension}.
\end{Theorem}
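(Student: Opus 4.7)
The boundary identities required for an Eisermann lifting follow at once: since $\partial$ is a homomorphism and $\{g,h\}=[s(g),s(h)]$, one has $\partial\{g,h\}=[g,h]$, so $\partial\phi^x(L,M)=[Mx^{-1},Lx^{-1}]$ and $\partial\psi^x(L,M)=[L,M]\,[ML^{-1},x]$ by direct substitution. The Reidemeister 1 axiom reduces similarly: $\psi^x(X,X)=\{X,X\}\{1_G,x\}=1_E\cdot 1_E=1_E$, since any bracket with a trivial argument vanishes.

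For R2 and R3 my plan is to lift the proof of Theorem \ref{eisermann} from $G'\to G'$ to $E\to G$ essentially verbatim. That proof proceeds by expanding each commutator bracket into a product of generators $l,m,n,x^{\pm 1}$ and their conjugates, and then cancelling adjacent inverse pairs. The key tool for the lifting is Lemma \ref{bracketlift}: for \emph{any} preimages $\tilde u,\tilde v\in E$ of $u,v\in G$ one has $\{u,v\}=[\tilde u,\tilde v]$. Choosing lifts $\tilde L,\tilde M,\tilde N,\tilde x\in E$ of the variables, every bracket appearing in R2 or R3 becomes an honest commutator in $E$, and every action $L\triangleright e=s(L)\,e\,s(L)^{-1}$ becomes ordinary conjugation in $E$. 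Each cancellation of the form $\tilde a\tilde a^{-1}=1_E$ that is valid in $G$ remains valid in $E$, so the Eisermann computation for R2 (respectively R3) yields the same element of $E$ on both sides.

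The main obstacle is a bookkeeping issue: the section $s\colon G\to E$ is not a homomorphism, so lifts of products in $G$ do not equal products of $s$-lifts on the nose, but only up to the central cocycle $\lambda(g,h)$. This is resolved by the centrality of $A=\ker\partial$: any two preimages of a fixed element of $G$ differ by an element of $A$, which is central in $E$ and therefore invisible to commutators. Consequently every intermediate bracket is an unambiguous element of $E$, the conjugation actions are independent of the chosen section, and the sequence of identities used in the proof of Theorem \ref{eisermann} transports cleanly to $E$, establishing R2 and R3 for $\Phi^x$. Combined with the boundary identities above, this shows $\Phi^x$ is an unframed Eisermann lifting for $(\partial\colon E\to G,\triangleright)$.
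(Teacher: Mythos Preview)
Your proposal is correct and follows essentially the same approach as the paper: establish $\partial\{g,h\}=[g,h]$, verify R1 directly, and for R2 and R3 use surjectivity of $\partial$ together with Lemma~\ref{bracketlift} to replace each $\{\,\cdot\,,\,\cdot\,\}$ by an honest commutator of chosen lifts in $E$, thereby reducing to the algebraic identities already proved in Theorem~\ref{eisermann}. Your explicit remark that the central cocycle $\lambda$ is invisible to commutators is exactly what makes Lemma~\ref{bracketlift} work and is how the paper handles the compound arguments such as $x^{-1}ML^{-1}xL$; the paper also notes, as you do, that the action becomes ordinary conjugation in $E$.
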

\begin{proof}
Given $L,M \in G$, then $\d (\{L,M\}) = \d([s(L),s(M)])= {[\d(s(L)),\d(s(M))]}= [L,M]$. Since $\d$ is surjective we can find $l\in E$ such that $L=\d(l)$, and likewise $M=\d(m)$ and $x=\d(y)$. The Reidemeister 2 condition \eqref{R2}, which is: 
\[
\{Mx^{-1},Lx^{-1}\}\,\,\{L, x^{-1}ML^{-1}xL\}\,\,\{x^{-1} ML^{-1}x,x\}=1,
\] 
becomes \[
\{\d(my^{-1}),\d(ly^{-1})\}\,\,\{\d(l), \d(y^{-1}ml^{-1}yl)\}\,\,\{\d(y^{-1} ml^{-1}y),\d(y)\}=1,\]
or, what is the same (by using lemma \ref{bracketlift}):
\[
[my^{-1},ly^{-1}]\,\,[l, y^{-1}ml^{-1}yl]\,\,[y^{-1} ml^{-1}y,y]=1.
\] 
This is an algebraic identity which was shown to hold in the proof of Theorem \ref{eisermann}. An analogous argument shows that the Reidemeister 3 equation \eqref{R3} is satisfied, since $\d(l)\tr m = lml^{-1}$, so that we can use the algebraic identity for Reidemeister 3 from the same proof. For the Reidemeister 1 move this follows from 
\[\psi^x(M,M)=\{Mx^{-1},Mx^{-1}\}=\{\d(my^{-1}),\d(my^{-1})\}=[my^{-1},my^{-1}]=1_E .\]
\end{proof}
\begin{remark}
 {By the proof of the previous theorem, we can see that an alternative expression for $\psi^x$ is:}
\[\psi^x(L,M)=\{xML^{-1}x^{-1}Lx^{-1},Lx^{-1}\}^{-1}.\]
  {Note that for each $L,M,x \in E$ we have (since these are identities between usual commutators) that  $\{xML^{-1}x^{-1}Lx^{-1},Lx^{-1}\}^{-1}=\{L,M\} \{ ML^{-1}, x\}.$ }
\end{remark}

\subsubsection{A non-trivial example of {an} unframed  Eisermann lifting}\label{ufel}
In the context of {Theorem \ref{liftextension2},} let us find a lifting of the Eisermann invariant for the case of $G=S_5$, for which we gave detailed calculations in subsection \ref{ep}. It is well known that $S_5$ is isomorphic to $\PGL(2,5)$, the group of invertible two-by-two matrices in the field $\Z_5$, modulo the central subgroup $\Z_5^\ast$ of diagonal matrices which are multiples of the identity. We thus have a central extension:
\[\{0\} \to \Z_5^\ast \ra{i} \GL(2,5) \ra{p}  \PGL(2,5))\cong S_5\to \{1\}.\] 
Here $\GL(2,5)$ is the group of invertible two-by-two matrices in the field $\Z_5$.

Let $K_+$ and $K_-$ be the right and left handed trefoils. In  table \ref{T2} we display $\langle a |I_{\Phi^x}(K_+)| 1 \rangle$ and  $\langle a |I_{\Phi^x}(K_-)| 1 \rangle$ for some choices of $x \in \PGL(2,5)\cong S_5$, representing all possible conjugacy classes in $S_5\cong \PGL(2,5)$, in the same order as in table \ref{T}. In the {2nd} and {3rd} rows of the table,  we {put:}
 \[{\sum_{s \,\in\, \PGL(2,5)} \langle s |I_{{\Phi}^x}(K_+)| 1\rangle \quad \quad \quad \textrm{ and }\quad \quad \quad \sum_{s \,\in \,\PGL(2,5)} \langle s |I_{{\Phi}^x}(K_-)| 1\rangle,}\]
respectively, both elements of the group algebra of $\GL(2,5)$. If $A\in \GL(2,5)$, its projection to $\PGL(2,5)$ is denoted by $\widetilde{A}$. 

\begin{table}
\scriptsize
\[
{
\begin{array}{|l|l|l|l|l|l|l|l|}\hline
 x    &  \widetilde{\begin{pmatrix}  1 & 0\\ 0 &1 \end{pmatrix}}   &   \widetilde{ \begin{pmatrix}  1 & 3\\ 4 & 4  \end{pmatrix}}  &   \widetilde{\begin{pmatrix}  0 & 1\\ 1 & 0 \end{pmatrix}}       &   \widetilde{\begin{pmatrix}  4 & 1\\ 4 & 0  \end{pmatrix}}         &  \widetilde{\begin{pmatrix}  3 & 1\\ 4 &4 \end{pmatrix}  }    &   \widetilde{ \begin{pmatrix}  2 & 0\\ 0 & 1 \end{pmatrix}  }    &    \widetilde{\begin{pmatrix}  3 & 0\\ 3 & 3 \end{pmatrix} }      \\ \hline
 K_+  & \begin{pmatrix} 1 & 0\\ 0 &1 \end{pmatrix}    &   7\begin{pmatrix}  1 & 0\\ 0 &1 \end{pmatrix}   &  5 \begin{pmatrix}  1 & 0\\ 0 & 1 \end{pmatrix}     & \begin{pmatrix}  1 & 0\\ 0 & 1  \end{pmatrix}   + 6\begin{pmatrix}  4 & 0\\ 0 & 4  \end{pmatrix}           &\begin{pmatrix}  1 & 0\\ 0 & 1 \end{pmatrix} &  \begin{pmatrix}  1 & 0\\ 0 & 1 \end{pmatrix}  +4 \begin{pmatrix}  3 & 0\\ 0 & 2 \end{pmatrix}   &  \begin{pmatrix}  1 & 0\\ 0 & 1 \end{pmatrix}  +5 \begin{pmatrix}  4 & 0\\ 1 & 4 \end{pmatrix}     \\                      \hline 
 K_-  & \begin{pmatrix}  1 & 0\\ 0 &1 \end{pmatrix}   &   7\begin{pmatrix}  1 & 0\\ 0 &1 \end{pmatrix} &   5 \begin{pmatrix}  1 & 0\\ 0 & 1 \end{pmatrix}      &  \begin{pmatrix}  1 & 0\\ 0 & 1  \end{pmatrix}   + 6\begin{pmatrix}  4 & 0\\ 0 & 4  \end{pmatrix}  & \begin{pmatrix}  1 & 0\\ 0 & 1 \end{pmatrix}   &   \begin{pmatrix}  1 & 0\\ 0 & 1 \end{pmatrix}  +4 \begin{pmatrix}  2 & 0\\ 0 & 3 \end{pmatrix}   &   \begin{pmatrix}  1 & 0\\ 0 & 1 \end{pmatrix}  +5 \begin{pmatrix}  4 & 4\\ 4 & 0 \end{pmatrix}\\\hline    
\end{array}}
\]
\normalsize
\caption{\label{T2} {The lifted Eisermann invariant for the positive and negative trefoils in \ref{ufel}}}
\end{table}
\begin{table}\scriptsize
\[
\begin{array}{|l|l|l|l|l|l|l|l|}\hline
 x    &  \widetilde{\begin{pmatrix}  1 & 0\\ 0 &1 \end{pmatrix}}   &   \widetilde{ \begin{pmatrix}  1 & 3\\ 4 & 4  \end{pmatrix}}  &   \widetilde{\begin{pmatrix}  0 & 1\\ 1 & 0 \end{pmatrix}}       &   \widetilde{\begin{pmatrix}  4 & 1\\ 4 & 0  \end{pmatrix}}         &  \widetilde{\begin{pmatrix}  3 & 1\\ 4 &4 \end{pmatrix}  }    &   \widetilde{ \begin{pmatrix}  2 & 0\\ 0 & 1 \end{pmatrix}  }    &    \widetilde{\begin{pmatrix}  3 & 0\\ 3 & 3 \end{pmatrix} }      \\ \hline
 K_+  & \widetilde{\begin{pmatrix} 1 & 0\\ 0 &1 \end{pmatrix}}    &   7\widetilde{\begin{pmatrix}  1 & 0\\ 0 &1 \end{pmatrix}}   &  5 \widetilde{\begin{pmatrix}  1& 0\\ 0 & 1 \end{pmatrix} }    & 7\widetilde{\begin{pmatrix}  1 & 0\\ 0 & 1  \end{pmatrix}}           &\widetilde{\begin{pmatrix}  1 & 0\\ 0 & 1 \end{pmatrix}} &  \widetilde{\begin{pmatrix}  1 & 0\\ 0 & 1 \end{pmatrix}}  +4 \widetilde{\begin{pmatrix}  4 & 0\\ 0 & 1 \end{pmatrix}}   & \widetilde{ \begin{pmatrix}  1 & 0\\ 0 & 1 \end{pmatrix}}  +5 \widetilde{\begin{pmatrix}  3& 0\\ 3 & 3 \end{pmatrix} }    \\                      \hline 
 K_-  & \widetilde{\begin{pmatrix}  1 & 0\\ 0 &1 \end{pmatrix}}   &   7 \widetilde{\begin{pmatrix}  1 & 0\\ 0 &1 \end{pmatrix}} &   5 \widetilde{\begin{pmatrix}  1 & 0\\ 0 & 1 \end{pmatrix}}      &  7\widetilde{\begin{pmatrix}  1 & 0\\ 0 & 1  \end{pmatrix}}  & \widetilde{\begin{pmatrix}  1 & 0\\ 0 & 1 \end{pmatrix}}   &  \widetilde{ \begin{pmatrix}  1 & 0\\ 0 & 1 \end{pmatrix}}  +4 \widetilde{\begin{pmatrix}  4 & 0\\ 0 & 1 \end{pmatrix}}   & \widetilde{  \begin{pmatrix}  1 & 0\\ 0 & 1 \end{pmatrix}}  +5\widetilde{ \begin{pmatrix}  2& 0\\ 3 & 2 \end{pmatrix}}    \\ \hline
\end{array}
\]
\normalsize
\caption{\label{Ts} {The unlifted Eisermann invariant for the positive and negative trefoils in \ref{ufel}}}
\end{table}

Comparing with table \ref{Ts}, which shows the unlifted {Eisermann invariant $I_{\Phi^x_0}$ for $G={\rm PGL}(2,5)$,}  we can see that this lifting of the Eisermann invariant is strictly stronger than the Eisermann invariant itself. Specifically, looking at the penultimate column of tables \ref{T2} and \ref{Ts}, thus $x=\widetilde{ \begin{pmatrix}  2 & 0\\ 0 & 1 \end{pmatrix}  } $, we can see that the lifting distinguishes the trefoil from its mirror image. Namely, for the lifted {Eisermann invariant, noting:}  \[\widetilde{\begin{pmatrix} 3 &0\\0 &2 \end{pmatrix}}= \widetilde{\begin{pmatrix} 2 &0\\0 &3\end{pmatrix}}=\widetilde{\begin{pmatrix} 4 &0\\0 &1\end{pmatrix},}\] we have:
\[\left \langle \widetilde{\begin{pmatrix} 3 &0\\0 &2\end{pmatrix}} \Big | I_{{\Phi}^x}(K_+) \Big |  \widetilde{\begin{pmatrix} 1 & 0 \\ 0  & 1   \end{pmatrix} } \right \rangle=4\begin{pmatrix} 3 &0\\0 &2 \end{pmatrix}\neq 4\begin{pmatrix} 2 &0\\0 &3 \end{pmatrix} =\left \langle \widetilde{\begin{pmatrix} 3 &0\\0 &2\end{pmatrix}} \Big | I_{{\Phi}^x}(K_-) \Big |  \widetilde{\begin{pmatrix} 1 & 0 \\ 0  & 1   \end{pmatrix} } \right \rangle,\]                   
whereas for the unlifted Eisermann invariant $I_{\Phi^x_0}$:
\[\left \langle \widetilde{\begin{pmatrix} 3 &0\\0 &2\end{pmatrix}} \Big | I_{\Phi^x_0}(K_+) \Big |  \widetilde{\begin{pmatrix} 1 & 0 \\ 0  & 1   \end{pmatrix} } \right \rangle=4\widetilde{\begin{pmatrix} 3 &0\\0 &2 \end{pmatrix}}= \left \langle \widetilde{\begin{pmatrix} 3 &0\\0 &2\end{pmatrix}} \Big | I_{\Phi^x_0}(K_-) \Big |  \widetilde{\begin{pmatrix} 1 & 0 \\ 0  & 1   \end{pmatrix} } \right \rangle.\]                    
Table \ref{Ts} should be compared with table \ref{T}.

\subsection{Homotopy interpretation of the {liftings of} the Eisermann Invariant}
To give a homotopy interpretation of the lifting of the Eisermann invariant, Theorem \ref{liftextension2}, {we recall the notion of non-abelian tensor product and wedge product of groups, {due to Brown and Loday {\cite{BrL0,BrL}}}; see also \cite{BJR}.}
Let $G$ be a group. We define the group $G\otimes G$ (a special case of the tensor product of two groups $G\otimes H$) as being the group generated by the symbols $g\otimes h$, {where $g,h\in G$,} subject to the relations, $\forall g,h,k \in G$:
\begin{eqnarray}
 gh\otimes k & = &(ghg^{-1} \otimes gkg^{-1})\,\, (g\otimes k), \label{tp1}\\
 g\otimes hk & = & (g\otimes h)\,\, (hgh^{-1} \otimes hkh^{-1}). \label{tp2}
\end{eqnarray}

{The key fact about the {non-abelian} tensor product of groups} is that there is a homomorphism of groups $\de: G\otimes G \rightarrow G'=[G,G]$, defined on generators by $g\otimes h \mapsto [g,h]$, which is clearly surjective. Surjectivity also holds if we replace $G\otimes G$ by the group $G\wedge G$, obtained from $G\otimes G$ by imposing the {additional} relations: 
\[g\otimes g=1, \, \forall g\in G.\] 
We denote the image of $g\otimes h$ in $G\wedge G$ by $g\wedge h$. Finally \cite{BJR}, there is a left action $\bullet$ by automorphisms of $G$ on $G\otimes G$ and $G\wedge G$, given by:
\[
{g\bullet (h\otimes k) = (ghg^{-1}) \otimes (gkg^{-1}) \textrm{ and } g\bullet (h\wedge k) = (ghg^{-1}) \wedge (gkg^{-1}),}
\]
and we have two crossed modules of groups: $(\de:G\otimes G\rightarrow G', \bullet)$ and  $(\de:G\wedge G\rightarrow G', \bullet)$. This fact (which is not immediate) is Proposition 2.5 of \cite{BrL}.

The following theorem  is fully proved in \cite{BrL} and \cite{M}. {Group homology} is taken with coefficients in $\mathbb{Z}$.
\begin{Theorem}[Brown-Loday / Miller]\label{blm}
Let $G$ be a group. One has an exact sequence:
\[\{0\} \to {H}_2(G) \to G \wedge G \ra{\de} G' \to \{1\}. \]
\end{Theorem}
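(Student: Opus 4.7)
The plan is to combine Hopf's formula for $H_2$ with a careful analysis of the presentation of $G \wedge G$. First, I would verify that $\delta$ is well-defined and surjective. Surjectivity is immediate: $G'$ is generated by commutators and each $[g,h]=\delta(g\wedge h)$. Well-definedness of $\delta$ amounts to checking the defining relations \eqref{tp1}, \eqref{tp2} against the standard commutator identities $[gh,k]=(ghg^{-1})[g,k](ghg^{-1})^{-1}\cdot [g,k]$ and $[g,hk]=[g,h]\cdot h[g,k]h^{-1}$, together with the antisymmetry relation $g\wedge g=1 \mapsto [g,g]=1_G$. All of these hold in $G'$, so $\delta$ descends.

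Next, I would fix a free presentation $1\to R\to F\stackrel{\pi}{\to} G\to 1$. Hopf's formula gives an isomorphism $H_2(G)\cong (R\cap [F,F])/[F,R]$, so the goal reduces to identifying $\ker\delta$ with this quotient. To do this I would construct a map $\alpha\colon (R\cap [F,F])/[F,R]\to \ker\delta$ by sending an element $r=\prod_i [f_i,g_i]^{\epsilon_i}\in R\cap[F,F]$ to $\alpha([r])=\prod_i\bigl(\pi(f_i)\wedge\pi(g_i)\bigr)^{\epsilon_i}\in G\wedge G$. By construction $\delta(\alpha([r]))=\pi(r)=1$, so $\alpha([r])\in\ker\delta$. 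Vanishing on $[F,R]$ is easy: a generator $[f,\rho]$ with $\rho\in R$ maps to $\pi(f)\wedge\pi(\rho)=\pi(f)\wedge 1=1$, using the identity $g\wedge 1 =1$ in $G\wedge G$ (which follows from \eqref{tp2} by setting $h=k=1$).

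The main obstacle is showing that $\alpha$ is well-defined on equivalence classes (independent of the chosen expression of $r$ as a product of commutators) and is a bijection onto $\ker\delta$. For well-definedness, one uses the fact that any two expressions of $r\in [F,F]$ as products of commutators differ, modulo $[F,R]$, by commutator identities of precisely the type encoded by relations \eqref{tp1}, \eqref{tp2} in the wedge product; the crossed module structure $(\delta\colon G\wedge G\to G',\bullet)$ of Brown--Loday is exactly what is needed to absorb these ambiguities. To construct the inverse and conclude, I would use the lifting $F\wedge F\to G\wedge G$ induced by $\pi$, noting that for the free group $F$ the map $F\wedge F\to [F,F]$ is an isomorphism (since $H_2(F)=0$). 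Comparing the short exact sequences
\[
1\to \ker(F\wedge F\twoheadrightarrow G\wedge G)\to F\wedge F\to G\wedge G\to 1
\]
and $1\to [F,R]\to R\cap [F,F]\to \ker\delta\to 1$ via the snake lemma yields the desired identification $\ker\delta\cong (R\cap[F,F])/[F,R]\cong H_2(G)$.

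The non-trivial input is thus the identification of the kernel of $F\wedge F\twoheadrightarrow G\wedge G$ with the subgroup of $F\wedge F=[F,F]$ generated (as a normal subgroup under the $F$-action) by the commutators $[F,R]$, which is where the crossed module axioms are genuinely used. Once this is established, surjectivity of $\delta$ from the first paragraph and the kernel identification together yield the claimed exact sequence.
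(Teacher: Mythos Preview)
The paper does not supply a proof of this theorem: it merely records the statement and defers entirely to the references \cite{BrL} and \cite{M}. There is therefore nothing in the paper against which to compare your argument.

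On its own merits your sketch is the standard route, essentially Miller's argument via Hopf's formula, and the overall architecture is sound. Two comments. First, your displayed commutator identity for $[gh,k]$ is garbled; the identity that actually matches relation \eqref{tp1} under $\de$ is
\[
[gh,k]\;=\;{}^{g}[h,k]\,[g,k]\;=\;[ghg^{-1},\,gkg^{-1}]\,[g,k],
\]
not conjugation of $[g,k]$ by $ghg^{-1}$. Second, the step you yourself label ``the main obstacle'' is where the entire content of the theorem resides: showing that $\alpha$ is independent of the chosen commutator expression for $r$, and identifying the kernel of $F\wedge F\twoheadrightarrow G\wedge G$ with the image of $[F,R]$ inside $F\wedge F\cong[F,F]$, cannot be dispatched by a one-line appeal to the crossed module axioms or the snake lemma. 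One really must verify that the relations \eqref{tp1}, \eqref{tp2} and $g\wedge g=1$ correspond precisely, under the isomorphism $F\wedge F\cong[F,F]$, to quotienting $R\cap[F,F]$ by $[F,R]$; this is Miller's theorem, and it is a genuine computation. Your outline is a correct roadmap to that computation, but as written it remains a roadmap rather than a proof, which is presumably why the present paper simply cites the result.
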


Consider a central extension of groups $\{0\}\to A \to E \ra{\d} G \to \{1\}$, where $G$ is finite. 
 Let $K$ be a knot. Let $C_K$ be its complement.  Then it is well known, and {follows from the asphericity of the knot complement $C_K$ \cite{Pa} {(which is therefore an Eilenberg-MacLane space)}, combined with the fact that knot complements are homology circles \cite{BZ},} that ${H}_2(\pi_1(C_K))=\{0\}$. Therefore, by {the} Brown-Loday / Miller Theorem we have:   \[\pi_1(C_K)\wedge \pi_1(C_K)\cong [\pi_1(C_K),\pi_1(C_K)]=\pi_1(C_K)',\]
canonically.  
Let now $f\colon \pi_1(C_K) \to G$ be a group morphism. 
Define \[\hat{f}=\pi_1(C_K) \wedge \pi_1(C_K) \to E,\] as acting on the generators $x \wedge  y$ of $\pi_1(C_K)\wedge \pi_1(C_K)\cong [\pi_1(C_K),\pi_1(C_K)]$ {by
$\hat{f}(x \wedge y)=\{f(y),f(x)\}^{-1}; $} see {Lemma \ref{bracketlift}}.
That the map  $\hat{f}$ respects the defining relations for the non-abelian  wedge product, follows from the fact that $\ker(\d)$ is central in $E$, {as in the proof of Theorem \ref{liftextension2}.}

Going back  to the  knot $K$, choose a base point $p \in K$. {Let} $m_p \in \pi_1(C_K)$  and $l_p \in \pi_1(C_K)$ be the associated meridian and longitude. Then \[l_p \in [\pi_1(C_K) , \pi_1(C_K)] 	\cong \pi_1(C_K) \wedge \pi_1(C_K) .\]

Given an element $ x \in G$, we thus have a knot invariant of the form:
\[ \sum_{f \colon \pi_1(C_K) \to G \textrm{ with } f(m_p)=x} \hat{f}(l_p)\in \Z[E]. \]

\begin{Theorem}
 {Given $x\in G$, { a finite group}, let $\Phi^x$ be the unframed Reidemeister pair derived from the central extension of groups $\{0\}\to A \to E \ra{\d} G \to \{1\}$; Theorem \ref{liftextension2}.
 Let $K$ be a knot, with a base point $p$. Let $L_K$ be the associated string knot. Then:}
\[\sum_{a \in G}\langle 1_G \left | I_{\Phi^x}(L_K) \right | a \rangle = \sum_{f \colon \pi_1(C_K) \to G \textrm{ with } f(m_p)=x} \hat{f}(l_p). \]
\end{Theorem}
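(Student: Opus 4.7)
The plan is to reduce the identity to a diagrammatic calculation that matches the algebraic expression for the knot longitude, using the non-abelian wedge product as a bookkeeping device. First, I would identify the Reidemeister $\Phi^x$-colourings of the string knot diagram $L_K$ whose top arc is coloured $1_G$ with the set of group morphisms $f\colon \pi_1(C_K) \to G$ satisfying $f(m_p)=x$. Since $\Phi^x$ is an Eisermann lifting (Theorem \ref{liftextension2}), applying $\d$ to $\psi^x$ and $\phi^x$ recovers the Eisermann quandle operations of Lemma \ref{eqd}, so the arc-colouring portion of $\Phi^x$ coincides with the Eisermann quandle colouring that underlies Theorem \ref{eisermann}. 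By the discussion in subsection \ref{sklk}, such a colouring assigns to the $i$-th arc of $L_K$ the partial longitude $f(l_i)\in G'\subset G$, forcing the top to be $1_G$ and the bottom to be $f(l_p)$.

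Second, I would evaluate $e(F)$ for the colouring $F$ coming from a fixed $f$. The lifted pair of Theorem \ref{liftextension2} is obtained from \eqref{eisphipsi} by replacing each commutator $[\cdot,\cdot]$ with the bracket $\{\cdot,\cdot\}=[s(\cdot),s(\cdot)]$. Therefore, the computation described in Remark \ref{longformulanew}, which assembles the $\phi^x$ and $\psi^x$ factors via the composition rule \eqref{ver} and the horizontal product \eqref{hor} of $\C(\Gc)$, produces in the lifted case exactly the same word as the longitude formula \eqref{longformula}, but with each commutator $[a,b]$ replaced by $\{f(a),f(b)\}$. Crucially, since $\ker\d$ is central in $E$, the action $\tr$ appearing in vertical composition interacts cleanly with the brackets, through the identities $\{gh,k\}=(g\tr\{h,k\})\{g,k\}$ and $\{g,hk\}=\{g,h\}\,(h\tr\{g,k\})$, which follow directly from the definition of $\{\cdot,\cdot\}$ and Lemma \ref{liftextension}.

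Third, I would identify the resulting element of $E$ with $\hat{f}(l_p)$. Since $H_2(\pi_1(C_K))=\{0\}$, Theorem \ref{blm} yields a canonical isomorphism $\pi_1(C_K)\wedge\pi_1(C_K)\cong[\pi_1(C_K),\pi_1(C_K)]$, so the longitude $l_p$ has a unique preimage in the wedge product, given by replacing every commutator $[a,b]$ in \eqref{longformula} by $a\wedge b$. Applying $\hat{f}$ and using $\{y,x\}^{-1}=\{x,y\}$, one obtains precisely the bracket product extracted from $e(F)$ in the previous step. Summing over all admissible colourings, equivalently over the $f$'s with $f(m_p)=x$, then gives the claimed identity.

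The main obstacle I anticipate lies in the second step: one must verify carefully that the bracket product extracted from $e(F)$ really respects the defining relations \eqref{tp1}, \eqref{tp2} of the non-abelian wedge product, so that the matching with $\hat{f}(l_p)$ is term by term. Equivalently, the Peiffer conjugations built into the vertical composition rule \eqref{ver} must be absorbed correctly by the lifted brackets via the two identities above, and the ordering of factors must match \eqref{longformula} on the nose. This is essentially a diagrammatic version of the algebraic identities already exploited in the proofs of Theorems \ref{eisermann} and \ref{liftextension2}, promoted from a local check at each Reidemeister move to a global assembly across the whole diagram.
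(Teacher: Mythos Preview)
Your proposal is correct and matches the paper's own proof, which is a single sentence referring back to the unlifted case and Remark~\ref{longformulanew}; you have simply spelled out the details the paper leaves implicit. The obstacle you flag is already resolved by the paper's setup: since $\hat{f}$ is shown (just before the theorem) to be a well-defined homomorphism on $\pi_1(C_K)\wedge\pi_1(C_K)$ and $\delta$ is an isomorphism by Theorem~\ref{blm} together with $H_2(\pi_1(C_K))=0$, any wedge-lift of the longitude word from Remark~\ref{longformulanew} is the unique one, so applying $\hat{f}$ recovers the bracket word for $e(F)$ term by term.
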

\begin{proof}
 {The proof is exactly the same as for the unlifted case. Note Remark \ref{longformulanew}.}
\end{proof}

\section*{Acknowledgements}
J. Faria Martins was   supported by CMA/FCT/UNL, under the grant PEst-OE/ MAT/UI0297/2011. This work was partially supported by the Funda\c{c}\~{a}o para a Ci\^{e}ncia e a Tecnologia through the projects
PTDC/MAT/098770/2008, 
PTDC/MAT/ 101503/2008, 
and PEst-OE/EEI/LA0009/2013. 
We would like to thank Ronnie Brown for comments.

\end{document}